\newcommand{\ee}{\varepsilon}
\newcommand{\G}{\mathbb{G}}
\newcommand{\SFL}{\mathrm{SFL}}
\newcommand{\RFL}{\mathrm{RFL}}
\newcommand{\Ls}{(-\Delta)^s}
\newcommand{\Green}{\mathcal{G}}
\newcommand{\cC}{{\mathcal C}}
\newcommand{\cM}{{\mathcal M}}
\DeclareMathOperator{\sign}{sign}
\DeclareMathOperator{\supp}{supp}
\newcommand{\Dirac}{\delta}
\newcommand{\one}{\mathbf 1}
\newcommand{\ulim}{\underline u}
\newcommand{\dx}{\mathrm{d}x}
\newtheorem{theorem}{Theorem}[section]
\newtheorem{proposition}{Proposition}[section]
\newtheorem{corollary}{Corollary}[section]
\newtheorem{lemma}{Lemma}[section]
\theoremstyle{definition}
\newtheorem{definition}{Definition}[section]
\newtheorem{myquestion}{Question}
\newtheorem{remark}{Remark}[section]
\numberwithin{equation}{section}
\begin{document}
\title{The fractional Schr\"odinger equation \\ with singular potential and measure data}

	\author{%
		D. Gómez-Castro%
			\thanks{Instituto de Matem\'atica Interdisciplinar, Universidad Complutense de Madrid. \url{dgcastro@ucm.es}} %
		\and
		J.L. Vázquez%
			\thanks{Departamento de Matem\'aticas, Universidad Autónoma de Madrid. \url{juanluis.vazquez@uam.es}}
	}
\maketitle

\begin{abstract}
	We consider the steady fractional Schr\"odinger equation $L u + V u = f$ posed on a bounded domain $\Omega$; $L$ is an integro-differential operator, like the usual versions of the fractional Laplacian $(-\Delta)^s$;  $V\ge 0$ is a potential  with possible singularities, and the right-hand side are integrable functions or Radon measures.
	We reformulate the problem via the Green function of $(-\Delta)^s$ and prove well-posedness for functions as data.
	If $V$ is bounded or mildly singular a unique solution of $(-\Delta)^s u + V u = \mu$ exists for every Borel measure $\mu$.
	On the other hand, when $V$ is allowed to be more singular, but only on a finite set of points, a solution of $(-\Delta)^s u + V u = \delta_x$, where $\delta_x$ is the Dirac measure at $x$, exists if and only if $h(y) = V(y) |x - y|^{-(n+2s)}$ is integrable on some small ball around $x$.
	We prove that the set $Z = \{x \in \Omega : \textrm{no solution of } (-\Delta)^s u + Vu = \delta_x \textrm{ exists}\}$ is relevant in the following sense: a solution of  $(-\Delta)^s u + V u = \mu$ exists if and only if $|\mu| (Z) = 0$. Furthermore, $Z$ is the set points where the strong maximum principle fails, in the sense that for any bounded $f$ the solution of $(-\Delta)^s u + Vu = f$ vanishes on $Z$.
	
\end{abstract}

\

\noindent {\sc Keywords.}  Nonlocal elliptic equations, bounded domains, Schr\"odinger operators, singular potentials, measure data. \normalcolor

	\noindent{\sc Mathematics Subject Classification}. 35R11, 35J10, 35D30,  35J67,
	35J75.

\setcounter{tocdepth}{2}
\tableofcontents

\vspace{.5cm}

\newpage

\section{Introduction and outline of results}

We study equations of the form
 \begin{equation}	
 \tag{P$_V$} 	
 \label{eq1}
 \begin{dcases}
L u + V u = f & \Omega, \\
 u = 0 & \partial \Omega \ (\textrm {resp. } \Omega^c)\,,
 \end{dcases}
 \end{equation}
 where  $L$ is an integro-differential operator, we are thinking of the usual Laplacian or one the usual fractional Laplacians $\Ls$ posed on a bounded domain $\Omega$ of $\mathbb R^n$, where $n \ge 3$ and $0<s\le 1$.  $V$ (the potential) is a nonnegative Borel measurable function. In the paper we will  assume Dirichlet boundary conditions to focus on the most relevant setting, but this is in no way essential. We recall that for nonlocal operators boundary conditions are usually replaced by exterior conditions. There are excellent references to nonlocal elliptic equations, both linear and nonlinear, see e.g. \cite{BucurValdi, CabreSire2014, CaffSilv2007, FelKassV2015, Ros-Oton2016}.

 We have recently studied Problem \eqref{eq1} in \cite{diaz+g-c+vazquez2018}, in the case where $L$ is the so-called restricted fractional Laplacian on a bounded domain. The problem was solved for all locally integrable potentials $V\ge 0$ and all right-hand data $f$ in the weighted space $L^1(\Omega, \mbox{dist} (\cdot, \Omega^c)^s)$, which turns out to be optimal for existence and uniqueness of so-called very weak solutions.

 The aim of the present paper is to extend the theory in two directions. Firstly, we want to consider a general class of operators for which a common theory can be constructed. This part of the paper encounters no major obstacles once the proper functional setting is found involving the properties of the Green functions.

 Secondly, we want to extend the theory from integral functions $f$ to Radon measures $\mu$. In doing that we will find a delicate existence problem   when the potential $V$ is singular and $\mu$ is a measure, since $V$ and $\mu$ may be incompatible. We want to understand this difficulty by  characterizing and describing the situation  when nonexistence happens. We start by introducing  a suitable concept of  generalized solution \rm obtained from  natural approximations. This kind of approximation process gives rise to candidate solutions  often known as \sl SOLA solutions \rm or \rm limit solutions when they are admissible solutions.

 Finally, we describe what happens to the  approximations in case of nonexistence: the limit solves the modified problem corresponding to a reduced measure $\mu_r$ instead of $\mu$. Reduced measures are compatible with $V$ and the solution to the problem with $V$ and $\mu_r$ is a kind of closest admissible problem to the original one.

\paragraph{Redefinition of the problem for general operators.} We will follow a trend that has been successfully used in the recent literature on elliptic and parabolic equations involving fractional Laplacians, cf. \cite{Bonforte+Sire+Vazquez2015,Bonforte+Vazquez2016,bonforte+figalli+vazquez2018} which consists in recalling that the main  fractional operators that appear in the literature  have a Green operator $\Green : f \mapsto \overline u$, where $\overline u$ is the unique solution of the inverse problem
 \begin{equation}
 \label{eq:Laplace with L}
 \tag{P$_0$}
\begin{dcases}
\Ls \overline  u = f& \Omega, \\
\overline u = 0 & \partial \Omega \ (\textrm {resp. } \Omega^c).
\end{dcases}
\end{equation}
This solution is given by
\begin{equation}
	\tag{G}
	\label{eq:integral expression of Green}
\overline u(x)=	\Green (f) (x) = \int_ \Omega \G(x,y) f (y) dy.
\end{equation}
The important point is that $\Green$ has very good functional properties acting on classes of continuous or $L^p$ data $f$.
We will list below in \Cref{sec:hypothesis Green} the specific assumptions that determine the class of operators $\Green$ that we can consider.
In \Cref{sec:examples laplacians} we make sure that main examples of fractional operators are included. The Green operator approach is quite efficient and leads us to propose a suitable definition of solution.

\begin{definition} A dual solution of \eqref{eq1} for data $f\in L^1(\Omega)$ is a function $u \in L^1 (\Omega)$  such that
		\begin{subequations}
		\label{eq:fixed point formulation}
		\begin{gather}
		Vu \in L^1 (\Omega) \\ 
		u = \Green ( f - Vu )
		\end{gather}
	\end{subequations}	
\end{definition}
In \Cref{sec:definitions} we show how this definition matches previous notions: very weak solutions and weak-dual solutions. See in this respect previous proposals like those of \cite{Bonforte+Sire+Vazquez2015} and \cite{bonforte+figalli+vazquez2018} dealing with nonlinear parabolic problems and elliptic problems, resp.

\medskip

\subsection{Outline of results}

We state the main contributions.

\paragraph {Results for operators without potentials.}
\Cref{sec:L1 theory for G} contains general facts about the action of operators $\Green$ with attention to covering the examples of operators introduced in \Cref{sec:hypothesis Green}.   \normalcolor Due to \eqref{eq:regularization} we show by duality that $\Green: \cM (\Omega) \to L^1 (\Omega)$ and, hence, \eqref{eq:fixed point formulation} can be extend the theory to the case where $f \in L^1 (\Omega)$ is replaced by a measure $\mu \in \cM(\Omega)$.
In \Cref{sec:definitions} we discuss the definition of dual, weak-dual and very weak solutions for the problem with and without a potential $V$.

\paragraph {Results for operators with bounded potentials.}  \Cref{sec:existence for V bounded} presents the general existence and uniqueness theory under the assumptions that $V$ is bounded while $f$ is merely integrable. In other words, we construct the operator $\Green_V$ for $V \in L^\infty$. The solution is constructed as a fixed point.

\paragraph{Uniqueness for general potentials.} In  \Cref{sec:uniqueness of Schrodinger} we prove that, under some assumptions on $\Green$, there exists at most one solution of \eqref{eq:fixed point formulation}. When it exists, it will obtained  as $\Green_V (\mu)$. The difficult question is whether this solution exists in the sense of our definitions. In Section \ref{sec.exist.L1L1} we prove uniqueness for $V\ge 0$ and $f$ merely integrable.

\paragraph {Results for integrable potentials and data.} In  \Cref{sec.exist.L1L1} we deal with the case: $f, V \in L^1 (\Omega)$. In paper \cite{diaz+g-c+vazquez2018} we were interested in understanding the effect of a singularity of $V$ at the boundary, and so we chose $V \in L^1_{loc} (\Omega)$, $f \mathrm{d} (x, \Omega^c)^s \in L^1 (\Omega)$ and we also studied the Restricted Fractional Laplacian ($(-\Delta)^s_\RFL$) as operator. Under those circumstances we proved existence in all cases,  because we restricted to functions. Our approach of double limit used in that paper will still work here, for general $\Ls$, when $(f,V) \in L^1 (\Omega) \times L^1_+(\Omega)$.

\medskip

\noindent {\bf Interaction of singular potentials and measures.} We now turn our attention to the existence theory when the integrable function $f$ is replaced by a  measure $\mu$. The problem lies in the interaction of the measure with an unbounded potential $V\ge 0$. We find an obstacle to existence if $V$ is too singular at points where the measure has a  discrete component.

In order to focus on the main obstacle, we consider only potentials $V\ge 0$ with isolated singularities. The precise condition is as follows:  $V$ will be singular, at most, at a finite set $S \subset \Omega$  and
	\begin{equation}	
		V : \Omega  \to [0, + \infty] \textrm{ is measurable and } L^\infty\left( \Omega \setminus \bigcup_{x \in S} B_\rho (x) \right) \textrm{ for all } \rho > 0, \nonumber
		\tag{V1}
		 \label{eq:V singular 0}
	\end{equation}
Notice that we specify no particular rate of blow-up at the points of $S$.

In Section \ref{sec:existence for V singular at 0} we introduce the approximation method by means of bounded regularized potentials $V_k= V \wedge k$, that will lead us to the existence of a well-defined limit, that we call the Candidate Solution Obtained as Limit of Approximation (CSOLA). This works for all Radon measures $\mu$ as right-hand side. In the case where $f\in L^p (\Omega)$ we prove existence of a dual solution as a limit of $\Green_{V_k} (f)$, and we study the limit operator $\Green_V$.

\paragraph{Characterizing solvability and describing non-existence} In Section \ref{sec.nonex} we address the question of  nonexistence when  $\mu$ and $V$ turn out to be incompatible.
As the most representative instance, we first address the case where $\mu$ is a point mass and describe what happens when no solution exists in the form of {\sl concentration phenomenon} for $Vu$. In that case, it happens that if $u_k$ is the sequence of approximate solutions, then
\begin{equation}
u_k\to 0, \quad     \mbox{and} \quad  V_k u_k\to \delta_{x_0}.
\end{equation}
This allows to introduce the set $Z$ of incompatible points
	\begin{equation}
	Z = \{  x \in \Omega :  \textrm{ there is no dual solution of \eqref{eq:fixed point formulation} when } \mu = \delta_x   \}.
	\end{equation}
We also have the concept of {\sl reduced measure}. For a measure with support intersecting $Z$,  the obtained CSOLA is not a solution of \eqref{eq1} with data $\mu$, but it is the solution corresponding  to a {\sl reduced measure} associated to $\mu$, $V$ and $G$, which is given by
\begin{equation}
		\mu_r = \mu - \sum_{x \in Z} \mu (\{x\}) \delta_x.
\end{equation}
The notion of  reduced measure was introduced by Brezis, Marcus and Ponce \cite{Brezis2004a,brezis+marcus+ponce2007} in the study of the nonlinear Poisson equation $-\Delta u +g(u) =\mu$. See precedents in \cite{Brezis2003,vazquez_1983}. A excellent general reference is \cite{Ponce2016}.

\paragraph{Properties of the solution operator when $V$ is singular.} We study the limit operator  $\widetilde \Green_V : \cM (\Omega) \to L^1 (\Omega)$ that we call the CSOLA operator. This leads to the questions of the next paragraph.

\paragraph{$Z$ and the loss of the strong maximum principle.} In  Section \ref{sec.Z} address the problem of better understanding $Z$.
First, we  relate the solvability of the problem with a delta measure at a point $x_0\in S$ with the set of points where the Strong Maximum Principle does not hold for solutions with bounded data.
In this investigation we follow ideas developed by Orsina and Ponce for the classical Laplacian \cite{Orsina2018}.
More precisely, we show that a set of universal zeros is precisely the set of incompatible points, i.e.
\begin{equation}
	Z = \{  x \in \Omega : \Green(f) (x) = 0 \quad  \forall f \in L^\infty (\Omega)  \}
\end{equation}
This can be easily explained in \Cref{thm:Z V decomposed} by the fact that the kernel $\G_V$ of the operator $\Green_V$ vanishes:
\begin{equation}
	x \in Z \iff \G _V(x, y) = 0, \quad \textrm{ a.e. } y \in \Omega.
\end{equation}
In fact the kernel $\G_V$ induces an operator $\widetilde \Green_V$ which extends $\Green_V$, but does not necessarily give solutions of \eqref{eq:fixed point formulation}. Furthermore,
\begin{equation}
	\Green_V (\delta_x) \textrm{ is defined } \iff \widetilde \Green_V (\delta_x) \ne 0.
\end{equation}
The existence of this set $Z$ set is caused by $V$.

Work in this direction for the classical Laplacian using capacity can be found in \cite{Rakotoson2018}.

\paragraph{Complete characterization of $Z$.} Finally, under our assumption that $V$ has only isolated singular points, $Z$ is completely characterized in \Cref{thm:Z depending on V} by the condition
\begin{equation}
	x \notin Z \iff \int_{ B_\rho (x) } \frac{V(y)}{|x-y|^{n+2s}}dy <+ \infty  \quad \textrm{ for some } \rho>0 \textrm{ small enough}.
\end{equation}
Notice that, naturally, $Z \subset S$.
\medskip

\paragraph {Comments.}
Our results on singular potentials extend to fractional operators the results in \cite{Orsina2018} when $S = \{ x : V(x) = +\infty \} $ is a discrete set. However, our approach to the proof is completely different. We prove a solution exists if and only if it is the limit of approximating sequences corresponding to a cut-off $V_k = V \wedge k$, and we carefully study this limit. We explain what the limit is in all cases. Actually, we have seen that in the case of nonexistence, a degenerate situation happens where a part of the singular data $\mu$ remains concentrated as the singular part of the limit of the potential term $Vu$.

\subsection{Basic hypothesis on $\Green$}
\label{sec:hypothesis Green}

We list the properties that we will use in the study.
All of them are satisfied by the Green operators that are inverse to the usual Laplacians with zero Dirichlet boundary or external conditions.

(i) $\Green$ is symmetric and self-adjoint in the sense that
\begin{equation}
\tag{G1}
\label{eq:G is symmetric}
\G (x,y) = \G(y,x).
\end{equation}

(ii)  We assume $n \ge 3$ and we have the estimate
\begin{equation} 	\tag{G2}
\label{eq:estimate for G}
\G(x,y) \asymp \frac{1}{|x-y|^{n-2s}} \left( \frac{\Dirac(x) \Dirac(y)}{|x-y|^{2}} \wedge 1 \right)^\gamma.
\end{equation}
We call $0 < s \le 1$ the fractional order of the operator by copying from what happens for the standard of  fractional Laplacians, while  $0 < \gamma \le 1$ distinguishes between the different known cases fractional Laplacians via the boundary behaviour.

In some cases it could be sufficient to require that for every compact $K \Subset \Omega $ we have
\begin{equation}
0 < \frac{ c_K }{|x-y|^{n-2s}} \le \G (x,y) \le \frac{ C_K } {|x-y|^{n-2s}},
\end{equation}
but this not generally used.

(iii) Furthermore, we need positivity in the sense that
\begin{equation}
\tag{G3}
\label{eq:coercivity}
\int_ \Omega f \Green (f) \ge 0  \qquad \forall f \in L^2 (\Omega)
\end{equation}
The hypothesis above often follows from the stronger property of coercivity that holds for the standard versions of fractional Laplacian in forms like
\begin{equation}
\| (-\Delta)^{\frac s 2} u \|_{L^2} \le \int_ \Omega u \Ls u.
\end{equation}
Putting  $f = L u$  so that $u = \Green (f)$, we get
\begin{equation}
\| \Green(f)\|^2 \le \int_ \Omega \Green (f) f.
\end{equation}

(iv) Lastly, we assume $\Green$ is regularizing in the  sense that
\begin{equation}
\tag{G4}
\label{eq:regularization}
\Green: L^\infty (\Omega) \to \cC (\overline \Omega).
\end{equation}
Conditions for this property to hold are well-known for the main fractional operators (see, e.g., \cite{Ros-Oton2016} and the references therein). In the case of the most common choice,  Restricted Fractional Laplacian (RFL) we refer to \cite{Ros-Oton2014}).  For the Spectral Fractional Laplacian (SFL) a convenient reference is \cite{Caffarelli+Stinga2016}.

Interior regularity is usually higher (see \cite{Cozzi2017}). A general reference to fractional Sobolev spaces, embeddings and related topics if, or instance, \cite{DiNezza2012}.

\subsection{Usual examples of admissible operators}
\label{sec:examples laplacians}

\subsubsection{The classical Laplacian $-\Delta$}
In this case it is known
\begin{enumerate}
	\item \eqref{eq:estimate for G} holds with $s = 1$ and $\gamma = 1$.
	\item \eqref{eq:coercivity} is well known.
	\item The regularization \eqref{eq:regularization} is a classical result. See, e.g., \cite{Evans1998,Gilbarg+Trudinger2001}.
\end{enumerate}

\subsubsection{Restricted Fractional Laplacian $(-\Delta)^s_\RFL$}
This operator is given by
\begin{equation}
\label{eq:RFL}
(-\Delta)^s_\RFL  u (x) =c_{n,s} \int_{\mathbb R^n} \frac{u(x)-u(y)}{|x-y|^{n+2s}} dy
\end{equation}
where $u$ is extended by $0$ outside $\Omega$.
In this case it is known
\begin{enumerate}
	\item \eqref{eq:estimate for G} holds with $0 < s < 1$ and $\gamma = s$
	\item \eqref{eq:coercivity} since, for $f \in L^\infty (\Omega)$
	\begin{equation}
	\int_ \Omega f \Green (f) = \int_ \Omega (- \Delta)^s (\Green (f)) f = \int_ \Omega | (-\Delta)^{s/2} (\Green(f)) |^2 \ge 0.
	\end{equation}
	For the remaining functions we apply density.
	\item The regularization \eqref{eq:regularization} is proven via Hörmander theory. See, e.g. \cite{Grubb2015,Ros-Oton2014}.
\end{enumerate}

\subsubsection{Spectral Fractional Laplacian $(-\Delta)^s_\SFL$}
This operator is given by
\begin{equation}
(-\Delta)^s_\SFL  u (x) = \sum_{i=1}^{+\infty} \lambda_i^{s} u_i \varphi_i (x)
\end{equation}
where $(\varphi_i,\lambda_i )$ is the spectral sequence of the Laplacian with homogeneous Dirichlet boundary condition and $u_i = \int_ \Omega u \varphi_i$.
In this case it is known
\begin{enumerate}
	\item \eqref{eq:estimate for G} holds with $0 < s < 1$ and $\gamma = 1$
	\item \eqref{eq:coercivity} since, for $f \in H^1 (\Omega)$
	\begin{equation}
	\int_ \Omega f \Green (f) = \sum_{i=1}^{+\infty}\lambda_i^s f_i^2 \ge 0 .
	\end{equation}
	\item The regularization \eqref{eq:regularization} can be found in \cite{Caffarelli+Stinga2016}.
	
\end{enumerate}

\subsubsection{Other examples}
There are a number of other operators that can be considered like the Censored (or Regional) Fractional Laplacian which is described in many references, like \cite{bonforte+figalli+vazquez2018}.

\section{The elliptic equation without potential}
\label{sec:L1 theory for G}

\subsection{Immediate properties}
\begin{subequations}
	The following are immediate consequence of the kernel representation
\begin{lemma}
	Assume that $\G (x,y) \ge 0$.  Then, the Green operator \eqref{eq:integral expression of Green} is monotone in the sense that
	\begin{gather}
		\label{eq:monotonicity of Green}
		0 \le f \in L^\infty (\Omega) \implies 0 \le \Green (f).
	\end{gather}
	If, furthermore, \eqref{eq:G is symmetric} then \eqref{eq:integral expression of Green} is self-adjoint:
	\begin{gather}
	\label{eq:Green self adjoint}
	  \int_ \Omega \Green (f) g = \int_ \Omega f \Green (g) \qquad \forall f , g \in L^\infty (\Omega) .
	\end{gather}
\end{lemma}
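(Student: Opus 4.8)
The plan is to argue directly from the kernel representation \eqref{eq:integral expression of Green}, since both assertions are elementary integral consequences of the sign and symmetry of $\G$, requiring neither coercivity \eqref{eq:coercivity} nor the regularization \eqref{eq:regularization}. First I would record that $\Green(f)$ is well defined for $f \in L^\infty(\Omega)$: the upper bound in \eqref{eq:estimate for G} gives $\G(x,y) \le C\,|x-y|^{-(n-2s)}$ (the bracketed factor being at most $1$), and since $n-2s < n$ and $\Omega$ is bounded, the map $y \mapsto \G(x,y)$ lies in $L^1(\Omega)$ for each fixed $x$; hence the integral defining $\Green(f)(x)$ converges absolutely.

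For the monotonicity \eqref{eq:monotonicity of Green} there is nothing more to do: with $\G(x,y) \ge 0$ and $f \ge 0$ the integrand $\G(x,y) f(y)$ is nonnegative for a.e.\ $y$, so integrating in $y$ yields $\Green(f)(x) \ge 0$ for a.e.\ $x$.

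The self-adjointness \eqref{eq:Green self adjoint} is a Fubini computation. The only genuine point is to justify interchanging the order of integration, for which I would verify absolute integrability of $\G(x,y)\,f(y)\,g(x)$ on $\Omega \times \Omega$: since $f,g \in L^\infty$ and, by the envelope above, $\G \in L^1(\Omega \times \Omega)$, the double integral of the absolute value is finite. Granting Fubini, I would write
\begin{align*}
\int_\Omega \Green(f)(x)\, g(x)\, dx
&= \int_\Omega \left( \int_\Omega \G(x,y)\, f(y)\, dy \right) g(x)\, dx \\
&= \int_\Omega f(y) \left( \int_\Omega \G(x,y)\, g(x)\, dx \right) dy,
\end{align*}
and then invoke the symmetry \eqref{eq:G is symmetric}, $\G(x,y) = \G(y,x)$, to recognize the inner integral as $\Green(g)(y)$, yielding $\int_\Omega f\, \Green(g)$.

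The main (and only) obstacle is the integrability justification needed to apply Fubini, and this is routine: estimate \eqref{eq:estimate for G} supplies the integrable majorant $C\,|x-y|^{-(n-2s)}$ on the bounded set $\Omega \times \Omega$, so the interchange is legitimate and both identities follow at once.
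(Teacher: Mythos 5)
Your proof is correct and follows essentially the same route as the paper: nonnegativity of the integrand for monotonicity, and a Fubini interchange combined with $\G(x,y)=\G(y,x)$ for self-adjointness. The only difference is that you explicitly justify the Fubini step via the majorant from \eqref{eq:estimate for G}, a detail the paper leaves implicit.
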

\end{subequations}
\begin{proof}
	For the monotonicity we simply take into account that $\G \ge 0$ and therefore $\G (x,y) f(y) \ge 0$. To show that it is self-adjoint we compute explicitly
	\begin{align}
		\int_ \Omega \Green(f) (x) g(x) dx &= \int_ \Omega \left( \int_ \Omega \G(x,y) f(y) dy \right)  g(x) dx \nonumber\\
		&=  \int_ \Omega \int_ \Omega \G(x,y) f(y) g(x) dy dx \nonumber\\
		&= \int_ \Omega \int_ \Omega \G(y,x) f(y) g(x) dx dy \nonumber\\
		&= \int_ \Omega f(y) \left( \int_ \Omega \G(y,x) g(x) dx \right) dy \nonumber \\
		&= \int_ \Omega f(y) \Green(g) (y) dy
	\end{align}
	This completes the proof.
\end{proof}

\subsection{Regularization}

\label{sec:regularization}

\begin{theorem}
	\label{thm:regularization}
	If $f \in L^p (\Omega)$ then $\Green(f) \in L^{ q }(\Omega)$ for all $1 \le q < Q(p) = \frac{n}{n-2s}p$. Furthermore $\Green: L^p (\Omega) \to L^{q} (\Omega)$ is continuous.
\end{theorem}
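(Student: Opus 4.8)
The plan is to reduce the statement to a mapping property of a truncated Riesz potential on the bounded domain $\Omega$, using only the upper bound contained in \eqref{eq:estimate for G}. Since the boundary correction factor $\left(\frac{\Dirac(x)\Dirac(y)}{|x-y|^2}\wedge 1\right)^\gamma$ is always $\le 1$, that estimate yields the clean pointwise bound
\[
|\Green(f)(x)| \le \int_\Omega \G(x,y)\,|f(y)|\,dy \le C\int_\Omega \frac{|f(y)|}{|x-y|^{n-2s}}\,dy = C\,(K * |f|)(x),
\]
where $K(z) = |z|^{-(n-2s)}\one_{\{|z| \le D\}}$ and $D = \mathrm{diam}\,\Omega$. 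Because $n \ge 3$ and $s \le 1$ we have $n-2s \ge 1 > 0$, so $K$ is the Riesz kernel truncated to a ball, and an elementary polar-coordinates computation shows $K \in L^r(\ren)$ exactly when $(n-2s)r < n$, i.e. for every $r < \frac{n}{n-2s}$.

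First I would fix an exponent $q$ with $p \le q < Q(p) = \frac{n}{n-2s}\,p$ and define $r$ by $\frac1r := 1 + \frac1q - \frac1p$. A direct computation shows $1 \le r$ (this uses $q \ge p$) and $r < \frac{n}{n-2s}$ (this is where $q < Q(p)$ together with $p \ge 1$ enters, via $\frac1q > \frac1p - \frac{2s}{np} \ge \frac1p - \frac{2s}{n}$); hence $K \in L^r(\ren)$ and Young's convolution inequality gives
\[
\|\Green(f)\|_{L^q(\Omega)} \le C\,\|K*|f|\|_{L^q(\ren)} \le C\,\|K\|_{L^r}\,\|f\|_{L^p(\Omega)}.
\]
This simultaneously yields membership $\Green(f) \in L^q(\Omega)$ and continuity of $\Green : L^p(\Omega) \to L^q(\Omega)$, since the bound is linear in $f$ with a constant $C\|K\|_{L^r}$ independent of $f$.

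It remains to cover the range $1 \le q < p$, where the choice above would force $r < 1$ and Young no longer applies. Here I would invoke the previous step with the admissible exponent $q_0 := p$ (legitimate because $p < Q(p)$ always, as $\frac{n}{n-2s} > 1$) to obtain $\Green(f) \in L^p(\Omega)$, and then use the continuous embedding $L^p(\Omega) \hookrightarrow L^q(\Omega)$ on the finite-measure domain $\Omega$, with $\|\cdot\|_{L^q} \le |\Omega|^{1/q - 1/p}\|\cdot\|_{L^p}$. Composing the two continuous maps gives both membership and continuity on the whole stated range.

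The only mildly delicate point is the exponent bookkeeping that guarantees $1 \le r < \frac{n}{n-2s}$ from $q < Q(p)$, together with the separate treatment of $q < p$; there is no deep difficulty, only the standard Young machinery. I would add the remark that one could instead quote the Hardy–Littlewood–Sobolev inequality to reach the larger exponent $\frac{np}{n-2sp}$ (which dominates $Q(p)$ for $p>1$), but the weaker exponent $Q(p)$ stated here already suffices for our purposes: it matches $Q(1) = \frac{n}{n-2s}$, the exponent relevant for measure data discussed in the outline, and it avoids having to impose the restriction $2sp < n$.
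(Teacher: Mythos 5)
Your argument is correct, but it follows a genuinely different route from the paper. The paper first establishes the uniform kernel bound $\sup_x \int_\Omega \G(x,y)^q\,dy \le C$ for $q < \tfrac{n}{n-2s}$, deduces by H\"older/duality that $\Green : L^{q'}(\Omega) \to L^\infty(\Omega)$, hence $\Green: L^1 \to L^q$ and $\Green: L^\infty \to L^\infty$, and then obtains the full range $L^p \to L^q$, $q < Q(p)$, by Riesz--Thorin interpolation between these two endpoints. You instead dominate $|\Green(f)|$ pointwise by a convolution with the truncated Riesz kernel $K(z)=|z|^{2s-n}\one_{\{|z|\le D\}}$, check $K \in L^r$ for $r < \tfrac{n}{n-2s}$, and apply Young's inequality with $\tfrac1r = 1+\tfrac1q-\tfrac1p$; your exponent bookkeeping ($r\ge 1$ from $q\ge p$, and $r<\tfrac{n}{n-2s}$ from $q<Q(p)$ and $p\ge1$) is right, and the reduction of the range $q<p$ to $q=p$ via the finite-measure embedding is legitimate. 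Both proofs use exactly the same size information on the kernel, namely the upper bound in \eqref{eq:estimate for G}. Your approach is more self-contained (no interpolation theorem), handles all $p$ in one stroke, and makes transparent why the exponent $Q(p)$ is what it is. The paper's route has the side benefit that the intermediate mapping $\Green: L^{q'} \to L^\infty$ is isolated as a lemma and reused later (e.g.\ for the bound on $\|\Green(\one_A)\|_{L^\infty}$ in the Dunford--Pettis section), whereas with your argument one would still want to record that endpoint separately.
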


Our aim is to apply the Riesz-Thorin interpolation theorem (see, e.g., \cite{Triebel}).
\begin{theorem}[Riesz-Thorin convexity theorem]
	Let $T$ be a linear operator such that
	\begin{align}
		T&:L^{p_i} (\mathbb R^n) \to L^{q_i} (\mathbb R^n), \qquad i = 0,1
	\end{align}
	 is continuous for some $1 \le p_0 ,p_1 , q_0, q_1 \le +\infty$ and let, for $\theta \in (0,1)$ define
	\begin{equation}
		\frac{1}{p_\theta} = \frac{1- \theta}{p_0} + \frac{\theta}{p_1}, \qquad \frac{1}{q_\theta} = \frac{1- \theta}{q_0} + \frac{\theta}{q_1}.
	\end{equation}
	Then
	\begin{equation}
		T : L^{p_\theta} (\mathbb R^n) \to L^{q_\theta} (\mathbb R^n)
	\end{equation}
	is continuous. Furthermore
	\begin{equation}
		\| T \|_{\mathcal L (L^{p_\theta}  , L^{q_\theta})} \le 	\| T \|_{\mathcal L (L^{p_0} , L^{q_0})}^{1-\theta}\| T \|_{\mathcal L (L^{p_1}  , L^{q_1})}^{\theta}.
	\end{equation}
\end{theorem}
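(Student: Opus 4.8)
The plan is to prove this by the method of complex interpolation, whose analytic engine is the \emph{Hadamard three-lines lemma}: if $F$ is continuous and bounded on the closed strip $\{z : 0 \le \Re z \le 1\}$, holomorphic in its interior, and satisfies $|F| \le M_0$ on the line $\Re z = 0$ and $|F| \le M_1$ on $\Re z = 1$, then $|F(\theta)| \le M_0^{1-\theta} M_1^{\theta}$ for $0 \le \theta \le 1$. Writing $M_i = \| T \|_{\mathcal L (L^{p_i}, L^{q_i})}$, the desired conclusion is exactly $\| T \|_{\mathcal L (L^{p_\theta}, L^{q_\theta})} \le M_0^{1-\theta} M_1^{\theta}$, so the entire proof consists in manufacturing, for suitable test functions, a function $F$ to which this lemma applies.

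First I would reduce to a bilinear estimate on simple functions. Since simple functions with support of finite measure are dense in each $L^{p_\theta}(\ren)$ with $p_\theta < \infty$, and since $\| Tf \|_{q_\theta} = \sup\{ |\int_{\ren} (Tf)\, g| : g \text{ simple}, \ \| g \|_{q_\theta'} \le 1\}$ by the duality characterization of the $L^{q_\theta}$-norm (with $q_\theta'$ the conjugate exponent of $q_\theta$), it suffices to prove $|\int_{\ren} (Tf)\, g| \le M_0^{1-\theta} M_1^{\theta}$ for simple $f, g$ normalized by $\| f \|_{p_\theta} = \| g \|_{q_\theta'} = 1$. I write these in polar form $f = \sum_j c_j\, \omega_j \one_{A_j}$ and $g = \sum_k d_k\, \sigma_k \one_{B_k}$, with $c_j, d_k > 0$, unimodular phases $\omega_j, \sigma_k$, and disjoint index sets of finite measure.

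The heart of the matter is the choice of analytic family. I define affine-in-$z$ reciprocal exponents
\[
\frac{1}{p(z)} = \frac{1-z}{p_0} + \frac{z}{p_1}, \qquad \frac{1}{q'(z)} = \frac{1-z}{q_0'} + \frac{z}{q_1'},
\]
so that $p(\theta) = p_\theta$ and $q'(\theta) = q_\theta'$, and set
\[
f_z = \sum_j c_j^{\,p_\theta/p(z)}\, \omega_j \one_{A_j}, \qquad g_z = \sum_k d_k^{\,q_\theta'/q'(z)}\, \sigma_k \one_{B_k}, \qquad F(z) = \int_{\ren} (T f_z)\, g_z.
\]
Because $1/p(z)$ and $1/q'(z)$ are affine in $z$, each coefficient $c_j^{\,p_\theta/p(z)}$ and $d_k^{\,q_\theta'/q'(z)}$ is an exponential of a linear function of $z$; hence $F$ is a finite linear combination of exponentials, in particular entire and bounded on the strip, and $F(\theta) = \int (Tf)\, g$ since $f_\theta = f$ and $g_\theta = g$. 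On the line $\Re z = 0$ one computes $\Re(1/p(it)) = 1/p_0$, whence $\| f_{it} \|_{p_0}^{p_0} = \sum_j c_j^{p_\theta} |A_j| = \| f \|_{p_\theta}^{p_\theta} = 1$, and likewise $\| g_{it} \|_{q_0'} = 1$; Hölder's inequality together with the hypothesis $T : L^{p_0} \to L^{q_0}$ then gives $|F(it)| \le \| T f_{it} \|_{q_0} \| g_{it} \|_{q_0'} \le M_0$. The identical computation on $\Re z = 1$ yields $|F(1+it)| \le M_1$. The three-lines lemma now delivers $|\int (Tf)\, g| = |F(\theta)| \le M_0^{1-\theta} M_1^{\theta}$, and taking the supremum over $g$ followed by a density argument in $f$ finishes the proof.

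The main obstacle is endpoint bookkeeping rather than any deep difficulty: when one of $p_\theta, q_\theta$ equals $+\infty$ the normalization $c_j^{\,p_\theta/p(z)}$ degenerates and the analytic family must be adjusted (fixing $\| f \|_{p_\theta} = 1$ through the essential supremum and taking the corresponding exponent to be independent of $z$ in that variable), and one must separately check that both the duality formula for $\| Tf \|_{q_\theta}$ and the density of simple functions remain valid in those cases. A secondary point needing care is confirming that $F$ is genuinely bounded and continuous up to the boundary of the strip so that the three-lines lemma applies; this is immediate here precisely because we restricted to \emph{simple} $f$ and $g$, which is exactly what makes $F$ a finite exponential sum.
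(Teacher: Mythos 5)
The paper itself offers no proof of this statement: the Riesz--Thorin theorem is quoted as a classical result, with a pointer to Triebel's book, and is then used as a black box to establish the $L^p \to L^q$ mapping properties of the Green operator in \Cref{thm:regularization}. There is therefore no argument in the paper to compare yours against; what can be said is that your proof is correct and is the standard one (Thorin's complex-variable argument, essentially what the cited reference contains). The structure is sound: the duality reduction of $\| Tf \|_{q_\theta}$ to a bilinear estimate over normalized simple $g$, the analytic family with $1/p(z)$ and $1/q'(z)$ affine in $z$, the boundary computations $\| f_{it} \|_{p_0}^{p_0} = \| f \|_{p_\theta}^{p_\theta} = 1$ and $\| g_{it} \|_{q_0'}^{q_0'} = \| g \|_{q_\theta'}^{q_\theta'} = 1$, and the observation that $F$ is a finite sum of exponentials of affine functions of $z$ (hence entire and bounded on the strip, so the three-lines lemma genuinely applies) are all exactly right. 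You also correctly isolate the only two points that need care: the degenerate endpoint cases (e.g.\ $p_\theta = \infty$, which for $\theta \in (0,1)$ forces $p_0 = p_1 = \infty$, so the family can simply be taken constant in that variable), and the closing density step, where one approximates a general $f \in L^{p_\theta}$ by simple functions and passes to the limit using the continuity of $T$ on $L^{p_0} + L^{p_1}$ together with Fatou's lemma. Spelling out that last limiting argument would be the only addition needed to make the proof fully self-contained.
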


\begin{proposition}
	\label{thm:regularization from L1}
	Let $f \in L^1 (\Omega)$. Then $\Green (f) \in L^{q} (\Omega)$ for $1 \le q < Q(1) = \frac{n}{n-2s}$ and the map $\Green: L^1 (\Omega) \to L^q (\Omega)$ is continuous.
\end{proposition}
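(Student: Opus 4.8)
The plan is to reduce everything to the upper bound contained in the Green function estimate \eqref{eq:estimate for G} and then to control $\|\G(\cdot,y)\|_{L^q(\Omega)}$ uniformly in $y$ by a single convergent radial integral. Since the factor $\left(\frac{\Dirac(x)\Dirac(y)}{|x-y|^{2}}\wedge 1\right)^\gamma$ never exceeds $1$, \eqref{eq:estimate for G} furnishes a constant $C>0$ (depending only on $n$, $s$, and the implicit constant in \eqref{eq:estimate for G}) with
\[
0 \le \G(x,y) \le \frac{C}{|x-y|^{n-2s}}, \qquad x,y\in\Omega.
\]
Because $\G\ge 0$, this gives $|\Green(f)(x)| \le \int_\Omega \G(x,y)|f(y)|\,dy$, so it suffices to estimate the $L^q$ norm of this positive operator applied to $|f|$.

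First I would apply Minkowski's integral inequality, valid since $q\ge 1$, to the kernel representation:
\[
\|\Green(f)\|_{L^q(\Omega)} \le \left\|\int_\Omega \G(\cdot,y)\,|f(y)|\,dy\right\|_{L^q(\Omega)} \le \int_\Omega |f(y)|\,\|\G(\cdot,y)\|_{L^q(\Omega)}\,dy.
\]
The whole argument then rests on showing that $M_q := \sup_{y\in\Omega}\|\G(\cdot,y)\|_{L^q(\Omega)} < \infty$. Using the pointwise bound and the change of variables $z=x-y$, together with the fact that $\{x-y : x\in\Omega\}\subset B_D(0)$ where $D=\operatorname{diam}\Omega$, I obtain
\[
\|\G(\cdot,y)\|_{L^q(\Omega)}^q \le C^q\int_\Omega \frac{dx}{|x-y|^{q(n-2s)}} \le C^q\int_{B_D(0)} \frac{dz}{|z|^{q(n-2s)}},
\]
a quantity independent of $y$.

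The key step — and the only place where the threshold $q<Q(1)$ enters — is the convergence of this last integral. On a ball one has $\int_{B_D(0)} |z|^{-\beta}\,dz<\infty$ precisely when $\beta<n$; taking $\beta=q(n-2s)$ this reads $q(n-2s)<n$, i.e. $q<\frac{n}{n-2s}=Q(1)$. Hence $M_q<\infty$ for every such $q$, and chaining the two displays yields $\|\Green(f)\|_{L^q(\Omega)}\le M_q\,\|f\|_{L^1(\Omega)}$. Linearity of $\Green$ is immediate from \eqref{eq:integral expression of Green}, so this bound is exactly the asserted continuity of $\Green:L^1(\Omega)\to L^q(\Omega)$.

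I do not expect a genuine obstacle here: the content is entirely in the elementary radial integrability computation, and the only point requiring care is the uniformity in $y$ of the bound on $\|\G(\cdot,y)\|_{L^q}$, which is guaranteed by enclosing $\Omega$ in a fixed ball about each $y$. An alternative route would identify the majorant with the Riesz potential of order $2s$ and invoke the weak-type $(1,Q(1))$ Hardy–Littlewood–Sobolev endpoint, followed by the embedding $L^{Q(1),\infty}(\Omega)\hookrightarrow L^q(\Omega)$ for $q<Q(1)$ (valid since $|\Omega|<\infty$); the Minkowski argument above is more self-contained and avoids weak-type spaces altogether.
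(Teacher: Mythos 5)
Your proof is correct and rests on the same key estimate as the paper's, namely the uniform bound $\sup_{y}\int_\Omega \G(x,y)^q\,dx \le C$ for $q(n-2s)<n$, obtained from the upper bound in \eqref{eq:estimate for G} by the identical radial integral computation over a ball containing $\Omega$. The only difference is in the final bookkeeping: you pass from the kernel bound to the operator bound directly via Minkowski's integral inequality, whereas the paper first deduces $\Green\colon L^{q'}(\Omega)\to L^\infty(\Omega)$ by H\"older and then dualizes using the symmetry of $\G$; both routes yield the same constant and the same range $1\le q<Q(1)$.
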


We split the proof in some lemmas. The two first lemmas can be found in \cite{bonforte+figalli+vazquez2018} and are given here for the reader's convenience
\begin{lemma}
	\begin{equation}
	\int_ \Omega |\G (x,y)|^{q} dy \le C, \textrm{ where } 1 \le q <  \frac{ n } { n - 2s}
	\end{equation}
	and  $C$ does not depend on $x \in \Omega$.
\end{lemma}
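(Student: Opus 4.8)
The plan is to use only the upper bound contained in the two-sided estimate \eqref{eq:estimate for G}. The boundary factor there satisfies $\left( \frac{\Dirac(x)\Dirac(y)}{|x-y|^2} \wedge 1 \right)^\gamma \le 1$ for all $x,y \in \Omega$, regardless of their position, so \eqref{eq:estimate for G} immediately yields the pointwise bound $\G(x,y) \le C_1 |x-y|^{-(n-2s)}$, with $C_1$ depending only on the implicit constant in $\asymp$. Consequently $|\G(x,y)|^q \le C_1^q |x-y|^{-(n-2s)q}$, and the problem is reduced to estimating the Riesz-type integral $\int_\Omega |x-y|^{-(n-2s)q}\,dy$ uniformly in $x$.

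To obtain uniformity I would fix $R = \operatorname{diam}(\Omega)$, so that $\Omega \subseteq B_R(x)$ for every $x \in \Omega$. Then, passing to polar coordinates centred at $x$,
\[
	\int_\Omega \frac{dy}{|x-y|^{(n-2s)q}} \le \int_{B_R(x)} \frac{dy}{|x-y|^{(n-2s)q}} = \omega_{n-1} \int_0^R r^{\,n-1-(n-2s)q}\,dr,
\]
where $\omega_{n-1}$ denotes the surface measure of the unit sphere.

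The last integral converges exactly when $n-1-(n-2s)q > -1$, that is $q < \frac{n}{n-2s}$, which is precisely the hypothesis; its value depends only on $n$, $s$, $q$ and $R$, but not on $x$. Combining this with the factor $C_1^q$ gives the desired bound with a constant $C = C(n,s,q,\Omega)$ independent of $x$.

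There is no genuine obstacle in this lemma. The only point requiring care is to discard the boundary factor by bounding it by $1$ \emph{before} integrating: keeping track of $\Dirac(x)$ and $\Dirac(y)$ would reintroduce a dependence on $x$ and spoil the uniformity. Once this reduction is made, the statement is exactly the standard local integrability of the Riesz kernel $|x-y|^{-(n-2s)}$ over a bounded set.
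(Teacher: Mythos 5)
Your argument is correct and is essentially the same as the paper's: bound the boundary factor in \eqref{eq:estimate for G} by $1$ to reduce to the Riesz kernel $|x-y|^{-(n-2s)}$, enclose $\Omega$ in a ball $B_R(x)$, and integrate in polar coordinates, with convergence exactly when $q<\frac{n}{n-2s}$. No further comment is needed.
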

\begin{proof}
	We take $R$ large enough so that $\Omega \subset B_R (x)$ for every $x \in \Omega$. We have that
	\begin{align}
		\int_ \Omega |\G (x,y)|^{ q } dy &\le C \int_\Omega |x-y|^{(2s-n)q} dy \nonumber  \\
			&\le \int_{B_R (x)} |x-y|^{(2s-n)q} dy \nonumber \\
			&\le C \int_0^R r^{(2s-n)q} r^{n-1}dr
			 \le C
	\end{align}
	if $(2s-n)q + n > 0$. In other words if $q < \frac{n}{n-2s}$.
	This completes the proof.
\end{proof}
Through duality it is mediate that
\begin{lemma}
	$\Green : L^{q'} (\Omega) \to L^\infty (\Omega)$ is continuous for all $1 \le q < Q(1)$.
\end{lemma}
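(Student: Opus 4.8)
The plan is to read off the result directly from the integral representation \eqref{eq:integral expression of Green} together with the uniform kernel bound just established, using nothing beyond Hölder's inequality; the ``duality'' alluded to is simply the pairing of $L^q$ against $L^{q'}$.

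First I would fix $x \in \Omega$ and regard the slice $y \mapsto \G(x,y)$ as an element of $L^q(\Omega)$ and $f$ as an element of $L^{q'}(\Omega)$, where $q$ and $q'$ are conjugate, $1/q + 1/q' = 1$. Hölder's inequality applied to $\Green(f)(x) = \int_\Omega \G(x,y) f(y)\, dy$ then gives
\[
|\Green(f)(x)| \le \left( \int_\Omega |\G(x,y)|^q\, dy \right)^{1/q} \left( \int_\Omega |f(y)|^{q'}\, dy \right)^{1/q'} .
\]
The previous lemma supplies $\int_\Omega |\G(x,y)|^q\, dy \le C$ with $C$ \emph{independent of $x$}, valid exactly for $1 \le q < Q(1)$. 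Substituting yields $|\Green(f)(x)| \le C^{1/q}\, \|f\|_{L^{q'}(\Omega)}$ for every $x \in \Omega$.

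The crucial feature is the uniformity of $C$ in $x$: this is precisely what upgrades the pointwise estimate to a bound on the essential supremum, so that $\|\Green(f)\|_{L^\infty(\Omega)} \le C^{1/q}\, \|f\|_{L^{q'}(\Omega)}$, which is the asserted continuity with operator norm at most $C^{1/q}$. There is no genuine obstacle here, since all the analytic work was done in the kernel estimate; the only points deserving a word of care are this uniformity and the measurability of $x \mapsto \Green(f)(x)$, which follows from the joint measurability of $\G$ and the finiteness of the integral. I would also note, looking ahead, that it is exactly this $L^{q'} \to L^\infty$ bound that will deliver the Proposition's $L^1 \to L^q$ continuity upon dualizing and invoking the self-adjointness \eqref{eq:Green self adjoint}, so the present lemma is the substantive half of that argument and the direct Hölder route is the cleanest way to secure it.
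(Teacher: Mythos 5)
Your argument is correct and is essentially identical to the paper's: both apply H\"older's inequality to the integral representation $\Green(f)(x) = \int_\Omega \G(x,y) f(y)\,dy$ and invoke the preceding lemma's uniform-in-$x$ bound $\|\G(x,\cdot)\|_{L^q} \le C$ to obtain $|\Green(f)(x)| \le C\|f\|_{L^{q'}}$ for every $x$. Your additional remarks on measurability and on the role of this lemma in the dual $L^1 \to L^q$ statement are accurate but not needed.
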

\begin{proof}
	Through Hölder's inequality
	\begin{equation}
		|\Green(f) (x)| = \left| \int_ \Omega \G(x,y) f(y) \right| \le \int_ \Omega \G (x,y) |f(y)| \le \| \G(x, \cdot) \|_{q} \| f \|_{q'} \le C \| f \|_{q'} .
	\end{equation}
	and this holds uniformly on $x \in \Omega$.
\end{proof}

We can now prove the theorem.
\begin{proof}[Proof of \Cref{thm:regularization}]
	Due to the Riesz-Thorin interpolation theorem since $\Green: L^1 (\Omega) \to L^{\gamma} (\Omega)$ with $1 \le \gamma < Q(1)$ and $\Green : L^\infty (\Omega) \to L^{\infty} (\Omega)$ then
	$\Green: L^p (\Omega) \to L^{\gamma p } (\Omega)$. Therefore $\Green: L^p (\Omega) \to L^{q} (\Omega)$ for
	where $1 \le q < p Q(1) = Q(p)$.
\end{proof}

\begin{remark}
	\label{rem:eigenfunctions are continuous}
		Notice that this immediately implies that eigenfunctions are in $\cC(\Omega)$. Indeed, let
		\begin{equation}
			\underline Q (1) = \frac{1 + Q(1)}{2} \in (1, Q(1)), \qquad \underline Q(p) = p \underline Q(1) \in (p, \underline Q(p) ).
		\end{equation}
		$u = \lambda \Green (u)$. If $u \in L^1 (\Omega)$ then $\Green (u) \in L^{\underline Q(1)} (\Omega)$ and so $u \in L^{\underline Q(1)} (\Omega)$. Analogously $u \in L^{\underline Q^n(1)} (\Omega)$ for every $n \ge 1$. After a finite number of iterations we have $\underline Q^n(1) > (Q(1))'$. Therefore $u \in L^\infty (\Omega)$. But then $u = \lambda \Green (u) \in \mathcal C(\Omega)$.
\end{remark}

\subsection{Dunford-Pettis property of $\Green$}
\label{sec:Dunford-Pettis}
The aim of this section is to prove that
\begin{theorem}
	\label{thm:Dunford-Pettis for G}
	We have that, for any $0 < \beta < \frac{2s}n$
	\begin{equation}
	\int_A |\Green (f)| \le C |A |^\beta \| f \|_{L^1 (\Omega)}, \qquad \forall f \in L^1 (\Omega).
	\end{equation}
	for some $C > 0$.
	In particular,
	for every bounded sequence $f_n \in L^1 (\Omega)$ the sequence $\Green (f_n)$ is equiintegrable. In particular, there exists a weakly convergent subsequence $\Green (f_{n_k}) \rightharpoonup u$ in $L^1 (\Omega)$.
\end{theorem}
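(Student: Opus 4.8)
The plan is to reduce everything to a uniform estimate on how much mass the kernel $\G(\cdot,y)$ places on a set of small measure, and then read off equiintegrability as a soft consequence. First I would discard the boundary factor in \eqref{eq:estimate for G}: since the bracketed term there is at most $1$, we have the pointwise bound $\G(x,y)\le C|x-y|^{2s-n}$, and in particular $\G\ge 0$. Hence for any measurable $A\subset\Omega$ and any $f\in L^1(\Omega)$,
\[
\int_A |\Green(f)(x)|\,dx \le \int_A \int_\Omega \G(x,y)|f(y)|\,dy\,dx = \int_\Omega |f(y)|\left(\int_A \G(x,y)\,dx\right)dy,
\]
the interchange being justified by Tonelli since the integrand is nonnegative. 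It therefore suffices to prove that $\sup_{y\in\Omega}\int_A \G(x,y)\,dx \le C|A|^\beta$, after which the claimed inequality follows at once by pulling this supremum out of the integral against $|f|$.

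For the inner integral I would apply Hölder's inequality with conjugate exponents $q,q'$:
\[
\int_A \G(x,y)\,dx \le \left(\int_A \G(x,y)^q\,dx\right)^{1/q}|A|^{1/q'} \le \|\G(\cdot,y)\|_{L^q(\Omega)}\,|A|^{1/q'}.
\]
By the symmetry \eqref{eq:G is symmetric} together with the lemma already established (namely $\int_\Omega|\G(x,y)|^q\,dy\le C$ uniformly, for $1\le q< Q(1)=\frac{n}{n-2s}$), the factor $\|\G(\cdot,y)\|_{L^q(\Omega)}$ is bounded by a constant independent of $y$. This is precisely where the restriction on the exponent enters: to realize a prescribed $\beta$ as $\beta=1/q'=1-1/q$ with $q<Q(1)$ one needs exactly $\beta<1-\frac{n-2s}{n}=\frac{2s}{n}$, which accounts for the strict inequality in the statement. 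Choosing $q=1/(1-\beta)$ then gives $\int_A \G(x,y)\,dx\le C|A|^\beta$ uniformly in $y$, and combining with the previous display yields the quantitative estimate.

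For the second assertion I would argue as follows. Given a bounded sequence $(f_n)$ in $L^1(\Omega)$, say $\|f_n\|_{L^1}\le M$, the estimate gives $\int_A|\Green(f_n)|\le CM|A|^\beta$ for every $n$ and every measurable $A$; since $|A|^\beta\to 0$ as $|A|\to 0$ uniformly in $n$, the family $\{\Green(f_n)\}$ is equiintegrable, and it is bounded in $L^1$ (take $A=\Omega$). As $\Omega$ has finite measure, the Dunford--Pettis theorem then provides relative weak compactness of $\{\Green(f_n)\}$ in $L^1(\Omega)$, so some subsequence converges weakly to a limit $u\in L^1(\Omega)$.

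I expect no genuine obstacle here. The only points demanding care are the uniform-in-$y$ control of $\|\G(\cdot,y)\|_{L^q}$, which is handed to us by the symmetry \eqref{eq:G is symmetric} and the preceding lemma, and the exponent bookkeeping that forces $\beta<\frac{2s}{n}$; the passage from the integral bound to weak sequential compactness is just the standard Dunford--Pettis criterion on a finite measure space.
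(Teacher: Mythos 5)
Your proof is correct and is essentially the paper's argument: your uniform bound $\sup_y\int_A\G(x,y)\,dx\le C|A|^\beta$ is exactly the paper's lemma $\|\Green(\one_A)\|_{L^\infty}\le C|A|^\beta$ (obtained there from $\Green:L^p\to L^\infty$, $p>Q(1)'$, which is the same H\"older step against the $L^q$ kernel bound), and your Tonelli interchange is the paper's use of self-adjointness $\int_\Omega\Green(|f|)\one_A=\int_\Omega|f|\,\Green(\one_A)$. The exponent bookkeeping and the concluding appeal to Dunford--Pettis match as well.
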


For this we introduce the following auxiliary estimate
\begin{lemma}
	We have that
	\begin{equation}
	\label{eq:Green of indicator function}
	\| \Green (\one_A) \|_{L^\infty} \le C |A|^\beta, \qquad \textrm{ for any  } 0 < \beta < \frac{2s}{n}, \ \forall A \subset \Omega .
	\end{equation}
	where $C$ depends on $\beta$ but not on $A$.
\end{lemma}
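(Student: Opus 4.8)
The goal is to bound $\|\Green(\one_A)\|_{L^\infty}$ by $C|A|^\beta$ for any $0<\beta<\frac{2s}{n}$, uniformly in the measurable set $A\subset\Omega$. The plan is to estimate $\Green(\one_A)(x)=\int_A \G(x,y)\,dy$ pointwise using only the upper bound in the size estimate \eqref{eq:estimate for G}, namely $\G(x,y)\le C|x-y|^{-(n-2s)}$ (the boundary factor in \eqref{eq:estimate for G} is at most $1$, so it only helps). Thus it suffices to bound $\int_A |x-y|^{-(n-2s)}\,dy$ uniformly in $x$.

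The key technical step is a rearrangement / worst-case localization argument: among all sets $A$ of a fixed measure $|A|=m$, the integral $\int_A |x-y|^{2s-n}\,dy$ is maximized when $A$ is the ball $B_r(x)$ centered at $x$ with $|B_r(x)|=m$, since the kernel $|x-y|^{2s-n}$ is radially decreasing in $y$ around $x$. This gives $r=(m/\omega_n)^{1/n}$ where $\omega_n$ is the volume of the unit ball, so I would write
\begin{equation}
\int_A |x-y|^{2s-n}\,dy \le \int_{B_r(x)} |x-y|^{2s-n}\,dy = c_n \int_0^r \rho^{2s-n}\rho^{n-1}\,d\rho = c_n \frac{r^{2s}}{2s}.
\end{equation}
Since $r^{2s} = (m/\omega_n)^{2s/n} = C\,|A|^{2s/n}$, this yields the bound with the sharp exponent $\beta=\frac{2s}{n}$, hence with any smaller $\beta$ as well (using that $|A|\le|\Omega|$ is bounded, so $|A|^{2s/n}\le C|A|^\beta$ for $\beta<\frac{2s}{n}$). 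The constant depends only on $n$, $s$, and the constant in \eqref{eq:estimate for G}, and is independent of $x$ and $A$.

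I expect the main point requiring care—though it is not a serious obstacle—is justifying the rearrangement step cleanly. Rather than invoke a full Hardy–Littlewood rearrangement theorem, the simplest self-contained argument is the bathtub-type comparison: decompose $A = (A\cap B_r(x)) \cup (A\setminus B_r(x))$ and compare with $B_r(x) = (A\cap B_r(x)) \cup (B_r(x)\setminus A)$, noting these two "outer" pieces $A\setminus B_r(x)$ and $B_r(x)\setminus A$ have equal measure while on $B_r(x)\setminus A$ the kernel is at least $r^{2s-n}$ and on $A\setminus B_r(x)$ it is at most $r^{2s-n}$. This shows $\int_{A\setminus B_r(x)} \le \int_{B_r(x)\setminus A}$ and hence $\int_A \le \int_{B_r(x)}$, giving the claim. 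The passage from the $L^\infty$ estimate on $\Green(\one_A)$ to the Dunford–Pettis estimate of \Cref{thm:Dunford-Pettis for G} is then routine: approximate $f\in L^1$ by simple functions and use linearity together with the layer-cake representation, which is exactly the mechanism this lemma is designed to feed.
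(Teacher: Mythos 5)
Your proof is correct, but it takes a genuinely different route from the paper's. The paper's own proof is a one-liner that reuses the mapping property $\Green : L^{p}(\Omega) \to L^\infty(\Omega)$ for $p > n/(2s)$, established just before via the uniform bound $\int_\Omega \G(x,y)^q\,dy \le C$ for $q < \tfrac{n}{n-2s}$ together with H\"older; this gives $\|\Green(\one_A)\|_{L^\infty} \le C\|\one_A\|_{L^{p}} = C|A|^{1/p}$ with $\beta = 1/p$ sweeping out $(0,2s/n)$. You instead estimate $\int_A \G(x,y)\,dy$ directly from the kernel upper bound in \eqref{eq:estimate for G} and a bathtub comparison with the ball $B_r(x)$ of the same measure as $A$; your equal-measure decomposition showing $\int_{A\setminus B_r(x)} \le \int_{B_r(x)\setminus A}$ is sound, and the radial integral converges since $2s>0$ and $n-2s>0$. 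What your approach buys is the sharp endpoint exponent $\beta = 2s/n$ (from which the stated open range follows trivially because $|A|\le|\Omega|$), obtained without the Marcinkiewicz-space machinery that the paper invokes in a later remark to recover that endpoint; what the paper's approach buys is economy, since the needed continuity is already on the table. One small aside: your closing remark about passing to \Cref{thm:Dunford-Pettis for G} via simple functions and the layer-cake formula is not how the paper proceeds there --- the mechanism is duality, $\int_A|\Green(f)| \le \int_\Omega |f|\,\Green(\one_A) \le \|f\|_{L^1}\|\Green(\one_A)\|_{L^\infty}$ --- but that lies outside the scope of this lemma.
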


\begin{proof}
	We have that $\Green : L^{p} (\Omega) \to L^\infty (\Omega)$ for $p > Q(1)'$. Hence
	\begin{equation}
		\| \Green (\one_A) \|_{L^\infty} \le C \| \one_A \|_{L^{p} (\Omega)} = \left(  \int_A 1^{{p}}  \right)^{1 / p}  = C |A|^{1 / p}.
	\end{equation}
	Taking $\beta = \frac 1 {p}$ we complete the proof.
\end{proof}

\begin{proof}[Proof of \Cref{thm:Dunford-Pettis for G}]
	We prove that $\Green (f)$ satisfies
	\begin{align}
	\int_A |\Green(f)| &= \int_{ \Omega } |\Green (f) | \one_A \le \int_ \Omega \Green(|f|) \one_A
	= \int_{ \Omega } |f| \Green (\one_A) \le \| f \|_{L^1} \| \Green(\one_A) \|_{L^\infty} \nonumber \\
	& \le C |A|^\beta  \| f \|_{L^1 (\Omega)}.
	\end{align}
	This completes the proof.
\end{proof}

\begin{remark}
	Using Marcinkiewicz spaces the results in \Cref{sec:regularization,sec:Dunford-Pettis} can be proved with equality in the range of $\beta$. The required information about Marcinkiewicz spaces can be found in  \cite{Benilan+Brezis+Crandall1975}.
\end{remark}

\subsection{Extension of $\Green$ to $\cM (\Omega)$}
To use data in $\mathcal M (\Omega)$ we need the stronger assumptions \eqref{eq:regularization}, which we have not used until now.

We will extend our results by approximation. This philosophy has been applied successfully over the years (see, e.g., \cite{Kuusi+Mingione+Sire2015a} for relevant recent work in the nonlocal case).

\begin{theorem}
	\label{thm:extension of G to measures}
	Let $\Green$ satisfy \eqref{eq:Green self adjoint} and \eqref{eq:regularization}.
	Then, there exists an extension
	\begin{equation}
	\Green : \cM (\Omega) \to L^1 (\Omega).
	\end{equation}
	which is linear and continuous.
		Furthermore, this extension is unique and self-adjoint. The function $u = \Green (\mu)$ is the unique function such that $u \in L^1 (\Omega)$ and
		\begin{gather}
				\label{eq:Laplace vwf}
			\int_ \Omega u \psi = \int_\Omega \Green(\psi) \mathrm{d} \mu, \qquad \forall \psi \in L^\infty_c (\Omega).
		\end{gather}
\end{theorem}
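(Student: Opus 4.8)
The plan is to define $\Green(\mu)$ by approximation and to pin it down through the duality relation \eqref{eq:Laplace vwf}, using the Dunford--Pettis estimate of \Cref{thm:Dunford-Pettis for G} to guarantee that the limit lands in $L^1(\Omega)$ rather than merely in $\cM(\Omega)$ or in the oversized dual $(L^\infty)^*$. First I would fix $\mu \in \cM(\Omega)$ and choose a sequence $f_m \in L^\infty(\Omega)$ (for instance suitable mollifications or truncations of $\mu$) with $\|f_m\|_{L^1(\Omega)} \le |\mu|(\Omega)$ and $\int_\Omega f_m \varphi \to \int_\Omega \varphi \, \mathrm d\mu$ for every $\varphi \in \cC(\overline\Omega)$. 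The point of testing against all of $\cC(\overline\Omega)$, rather than only $\cC_c(\Omega)$, is that the regularized images $\Green(\psi)$ furnished by \eqref{eq:regularization} need not vanish on $\partial\Omega$; the convergence against $\cC(\overline\Omega)$ is legitimate because the masses do not escape and $|\mu|(\partial\Omega)=0$ for $\mu\in\cM(\Omega)$.

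Second, since $\{f_m\}$ is bounded in $L^1(\Omega)$, \Cref{thm:Dunford-Pettis for G} shows that $\{\Green(f_m)\}$ is equiintegrable and bounded in $L^1(\Omega)$, so by the Dunford--Pettis theorem a subsequence converges weakly in $L^1(\Omega)$ to some $u$. To identify $u$, I would pass to the limit in the self-adjointness identity \eqref{eq:Green self adjoint}: for $\psi \in L^\infty_c(\Omega)$,
\[
\int_\Omega \Green(f_m)\,\psi = \int_\Omega f_m\,\Green(\psi),
\]
where the left side tends to $\int_\Omega u\,\psi$ by weak $L^1$ convergence (as $\psi \in L^\infty$), and the right side tends to $\int_\Omega \Green(\psi)\,\mathrm d\mu$ because $\Green(\psi) \in \cC(\overline\Omega)$ by \eqref{eq:regularization} and $f_m \rightharpoonup \mu$ against $\cC(\overline\Omega)$. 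This yields \eqref{eq:Laplace vwf}, and I would set $u = \Green(\mu)$. When $\mu = f\,\mathrm dx$ with $f\in L^1(\Omega)$ this reproduces the original $\Green(f)$ by the continuity of \Cref{thm:regularization from L1}, so the map is genuinely an extension.

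Third, \eqref{eq:Laplace vwf} determines $u$ uniquely: if two $L^1$ functions satisfy it, their difference integrates to zero against every $\psi \in L^\infty_c(\Omega)$ and hence vanishes a.e. This uniqueness does most of the remaining work at once. It shows the limit is independent of the approximating sequence, so the whole sequence converges and no subsequence ambiguity remains; it gives linearity of $\mu \mapsto \Green(\mu)$ directly from the linearity in $\mu$ of the right-hand side of \eqref{eq:Laplace vwf}; and it identifies the extension as the unique one characterized by that relation. Continuity follows from weak lower semicontinuity of the $L^1$ norm together with the uniform bound $\|\Green(f_m)\|_{L^1} \le C\|f_m\|_{L^1} \le C\,|\mu|(\Omega)$ coming from \Cref{thm:regularization from L1}. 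Finally, self-adjointness is exactly the content of \eqref{eq:Laplace vwf}, read as the statement that the extended operator on $\cM(\Omega)$ is the adjoint of $\Green : L^\infty_c(\Omega) \to \cC(\overline\Omega)$; the passage from $\psi \in L^\infty_c$ to $\psi \in L^\infty$ is a routine approximation using $\psi\,\one_{K_j}$ with $K_j \uparrow \Omega$, the continuity $\Green : L^p \to L^\infty$, and $\Green(\mu)\in L^1$.

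The main obstacle I anticipate is the second step. A priori an $L^1$-bounded sequence only converges weakly-$*$ to a measure, and nothing intrinsic forces the limit of $\Green(f_m)$ to be absolutely continuous; it is precisely the equiintegrability furnished by \Cref{thm:Dunford-Pettis for G} that upgrades weak-$*$ convergence to weak $L^1$ convergence and confines the limit inside $L^1(\Omega)$. Without this estimate the construction would only produce an element of $(L^\infty)^*$, i.e.\ a finitely additive set function, and the asserted target space $L^1(\Omega)$ would be out of reach.
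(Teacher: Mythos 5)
Your proposal is correct and follows essentially the same route as the paper: approximate $\mu$ weak-$\star$ by bounded $L^\infty$ functions, use the equiintegrability estimate of \Cref{thm:Dunford-Pettis for G} to extract a weak $L^1$ limit, identify it by passing to the limit in the self-adjointness identity \eqref{eq:Green self adjoint} (using that $\Green(\psi)\in\cC(\overline\Omega)$ by \eqref{eq:regularization}), and then derive uniqueness, linearity and continuity from the duality relation \eqref{eq:Laplace vwf}. Your write-up is in fact somewhat more complete than the paper's on the points of consistency with the original operator on $L^1$ and the passage from $\psi\in L^\infty_c$ to general $\psi\in L^\infty$.
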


\begin{proof}

	Let $\mu \in \cM (\Omega)$. By density let $f_n \in L^\infty (\Omega)$ such that $f_n \, \dx \rightharpoonup \mu$ and bounded in $L^1 (\Omega)$. Due to \Cref{thm:Dunford-Pettis for G} there exists a subsequence $\Green (f_{n_k})$ converging weakly in $L^1(\Omega)$. Let $u$ be its limit. Furthermore
	\begin{equation}
	\| u \|_{L^1 (\Omega)} \le \liminf_n \| u_{n_k} \|_{L^1 (\Omega)} \le C \liminf \| f_{n_k} \|_{L^1 (\Omega)} = C \| \mu \|_{\cM (\Omega)}.
	\end{equation}
	Due to \eqref{eq:Green self adjoint}
	\begin{equation}
	\int_{ \Omega } u_{n_k} \psi = \int_ \Omega \Green(\psi) f_{n_k} \dx.
	\end{equation}
	Passing to the limit, since $\Green (\psi ) \in \cC (\Omega)$ we deduce

	There is at most one element with this property. If there two $u_1, u_2$ letting $w = u_1-u_2$ we would have
	\begin{equation}
		\int_ \Omega w \psi = 0 , \qquad \forall \psi \in L^\infty (\Omega).
	\end{equation}
	Taking $\psi = \sign_+ w$ we deduce $ w = 0$, so $u_1 = u_2$.
	
	Hence, our definition $\widetilde \Green (\mu) = u$ is consistent.
	
	Linearity. To show continuity we prove boundedness. Let $\mu\in \cM (\Omega)$ we have
	\begin{equation}
	\int_{ \Omega } \widetilde \Green (\mu) \psi =  \int_\Omega \Green (\psi ) \mathrm{d} \mu \le \| \Green (\psi) \|_{\cC} \| \mu \|_{\cM (\Omega)}.
	\end{equation}
	Taking $\psi = \sign (\Green (\mu))$ we deduce
	\begin{equation}
	\| \widetilde \Green (\mu)  \| \le \|\Green (\one_\Omega) \|_{\cC} \| \mu \|_{\cM (\Omega)}.
	\end{equation}
	Furthermore, we have shown that $\widetilde \Green (\psi)$ satisfies \eqref{eq:Laplace vwf}.
\end{proof}

\begin{corollary} For every $\mu \in \cM (\Omega)$
	\begin{equation}
		\int_A \Green (\mu) \le c |A|^\beta \| \mu \|_{\cM}
	\end{equation}
\end{corollary}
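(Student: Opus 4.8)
The plan is to transfer the Dunford--Pettis bound of \Cref{thm:Dunford-Pettis for G} from $L^1$ data to measure data through the very same approximation used to build the extension in \Cref{thm:extension of G to measures}. First I would fix $\mu \in \cM(\Omega)$ and a measurable set $A \subset \Omega$, and recall that $\Green(\mu)$ is obtained as a weak $L^1$-limit $\Green(f_{n_k}) \rightharpoonup \Green(\mu)$ of a sequence $f_n \in L^\infty(\Omega)$ with $f_n \, \dx \rightharpoonup \mu$ and bounded in $L^1(\Omega)$. The key preliminary point is to choose this approximating sequence so that its $L^1$ norms are controlled by the total variation of $\mu$: a standard mollification argument gives $\| f_n \|_{L^1(\Omega)} \le \| \mu \|_{\cM(\Omega)}$, or at least $\limsup_n \| f_n \|_{L^1(\Omega)} \le \| \mu \|_{\cM(\Omega)}$, which is all that is needed.

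Next, for each $n$ \Cref{thm:Dunford-Pettis for G} yields
$\int_A \Green(f_n) \le \int_A |\Green(f_n)| \le C |A|^\beta \| f_n \|_{L^1(\Omega)} \le C |A|^\beta \| \mu \|_{\cM(\Omega)}$,
a bound uniform in $n$. It then remains to pass to the limit along the subsequence $n_k$. Since $\Green(f_{n_k}) \rightharpoonup \Green(\mu)$ weakly in $L^1(\Omega)$ and $\one_A \in L^\infty(\Omega)$, testing against $\one_A$ gives $\int_A \Green(f_{n_k}) \to \int_A \Green(\mu)$. Combining this convergence with the uniform bound above produces $\int_A \Green(\mu) \le C |A|^\beta \| \mu \|_{\cM(\Omega)}$, which is precisely the claim.

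If one prefers the sharper inequality with $|\Green(\mu)|$ on the left, I would instead test the weak convergence against $g = \one_A \, \sign(\Green(\mu)) \in L^\infty(\Omega)$, so that $\int_A |\Green(\mu)| = \lim_k \int_\Omega \Green(f_{n_k})\, g \le \liminf_k \int_A |\Green(f_{n_k})|$, and the same uniform bound closes the argument. The only delicate point is the first paragraph---securing an approximating sequence whose $L^1$ norms do not exceed $\| \mu \|_{\cM(\Omega)}$ in the limit---but this is standard, and the rest is a routine passage to the weak limit, so I expect no genuine obstacle.
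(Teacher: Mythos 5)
Your proof is correct and follows essentially the route the paper intends: the corollary is stated without proof precisely because it falls out of combining the equi-integrability estimate of \Cref{thm:Dunford-Pettis for G} with the approximation $f_n\,\dx \rightharpoonup \mu$, $\limsup_n \|f_n\|_{L^1} \le \|\mu\|_{\cM}$, used to construct the extension in \Cref{thm:extension of G to measures}, followed by passage to the weak $L^1$ limit against $\one_A$. Your remark on testing against $\one_A \sign(\Green(\mu))$ to get the version with $|\Green(\mu)|$ is a sensible refinement consistent with how the estimate is used later.
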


\begin{corollary}
	If $\mu_n \rightharpoonup \mu$ weakly in $\cM (\Omega)$ then $\Green( \mu_ n) \rightharpoonup \Green (\mu)$ in $L^1 (\Omega)$.
\end{corollary}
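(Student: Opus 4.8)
The plan is to combine a weak compactness argument in $L^1(\Omega)$ with an identification of the limit through the duality characterization \eqref{eq:Laplace vwf}, and then to upgrade subsequential convergence to convergence of the full sequence by a standard subsequence argument.

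First I would record that a weakly convergent sequence in $\cM(\Omega)$ is bounded, so $M = \sup_n \| \mu_n \|_{\cM(\Omega)} < +\infty$ by the uniform boundedness principle applied to the predual $\cC_0(\Omega)$. The preceding corollary (in its absolute-value form, obtained by the same approximation from \Cref{thm:Dunford-Pettis for G}) then gives $\int_A |\Green(\mu_n)| \le c|A|^\beta M$ for all measurable $A \subset \Omega$, uniformly in $n$, which is exactly uniform equiintegrability of the family $\{ \Green(\mu_n) \}$. Together with the $L^1$-bound $\| \Green(\mu_n) \|_{L^1} \le C M$ coming from \Cref{thm:extension of G to measures}, the Dunford-Pettis criterion ensures that $\{ \Green(\mu_n) \}$ is relatively weakly compact in $L^1(\Omega)$. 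Hence every subsequence admits a further subsequence $\Green(\mu_{n_k}) \rightharpoonup v$ in $L^1(\Omega)$ for some $v \in L^1(\Omega)$.

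The key step is to show that any such weak limit $v$ must equal $\Green(\mu)$. Fixing $\psi \in L^\infty_c(\Omega)$, the identity \eqref{eq:Laplace vwf} gives
\begin{equation}
\int_\Omega \Green(\mu_{n_k})\, \psi = \int_\Omega \Green(\psi)\, \mathrm{d}\mu_{n_k}.
\end{equation}
The left-hand side converges to $\int_\Omega v\, \psi$ because $\psi \in L^\infty(\Omega)$ tests weak $L^1$ convergence. For the right-hand side, the regularization property \eqref{eq:regularization} gives $\Green(\psi) \in \cC(\overline\Omega)$, and the homogeneous Dirichlet (resp. exterior) condition built into \eqref{eq:Laplace with L} forces $\Green(\psi) = 0$ on $\partial\Omega$, so $\Green(\psi) \in \cC_0(\Omega)$ is an admissible test function for the weak-$*$ convergence $\mu_{n_k} \rightharpoonup \mu$. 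Thus the right-hand side converges to $\int_\Omega \Green(\psi)\, \mathrm{d}\mu = \int_\Omega \Green(\mu)\, \psi$, again by \eqref{eq:Laplace vwf}. We conclude that $\int_\Omega v\, \psi = \int_\Omega \Green(\mu)\, \psi$ for all $\psi \in L^\infty_c(\Omega)$, and the uniqueness statement in \Cref{thm:extension of G to measures} yields $v = \Green(\mu)$.

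The main obstacle is precisely the compatibility between the two notions of weak convergence: weak-$*$ convergence of measures tests against $\cC_0(\Omega)$, whereas weak $L^1$ convergence tests against $L^\infty(\Omega)$, and the bridge is that $\Green$ carries the $L^\infty_c$ test functions into $\cC_0(\Omega)$ — this is where the boundary behaviour encoded in $\Green$ is essential, and it is the step that genuinely uses \eqref{eq:regularization} rather than the weaker estimates of the previous sections. Once the limit is identified, since $\Green(\mu)$ is independent of the chosen subsequence, the usual contradiction argument (if the full sequence did not converge weakly, extract a subsequence staying away from $\Green(\mu)$ against some $L^\infty$ test function, then apply the above to reach a contradiction) shows that the entire sequence $\Green(\mu_n)$ converges weakly to $\Green(\mu)$ in $L^1(\Omega)$, completing the proof.
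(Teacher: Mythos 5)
Your proof is correct and follows essentially the same route the paper takes: the paper leaves this corollary to the subsequent, stronger Proposition \ref{prop:G continuous weak star measure to weak L1}, whose proof is exactly your argument (boundedness of $\|\mu_n\|_{\cM}$, equiintegrability of $\Green(\mu_n)$ from \Cref{thm:Dunford-Pettis for G}, extraction of a weakly convergent subsequence, identification of the limit by passing to the limit in $\int_\Omega \Green(\mu_n)\psi = \langle \Green(\psi),\mu_n\rangle$, and the uniqueness-of-subsequential-limits argument). In fact, since you only test $\mu_n$ against the continuous function $\Green(\psi)$, your argument proves the stronger weak-$\star$ version as well.
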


However, the following is stronger:
\begin{proposition}
	\label{prop:G continuous weak star measure to weak L1}
	If $\mu_n \rightharpoonup \mu$ weak-$\star$ in $\cM (\Omega)$ then $\Green( \mu_ n) \rightharpoonup \Green (\mu)$ in $L^1 (\Omega)$.
\end{proposition}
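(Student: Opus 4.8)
The plan is to combine the self-adjoint identity \eqref{eq:Laplace vwf} with the fact that $\Green$ sends bounded functions into continuous functions that \emph{vanish on the boundary}, so that testing against the predual $C_0(\Omega)$ already suffices. First I would record that a weak-$\star$ convergent sequence is bounded: by Banach--Steinhaus, $\sup_n \|\mu_n\|_{\cM(\Omega)} =: M < +\infty$. Then the measure version of \Cref{thm:Dunford-Pettis for G} (the corollary to \Cref{thm:extension of G to measures}) gives
\begin{equation*}
\int_A |\Green(\mu_n)| \le c\,|A|^\beta\,\|\mu_n\|_{\cM(\Omega)} \le cM\,|A|^\beta, \qquad \forall A \subset \Omega,\ \forall n,
\end{equation*}
so $\{\Green(\mu_n)\}$ is bounded in $L^1(\Omega)$ and equiintegrable. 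By the Dunford--Pettis theorem it is therefore relatively weakly compact in $L^1(\Omega)$.

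The heart of the argument is identifying all weak subsequential limits. Fix $\psi \in L^\infty_c(\Omega)$. By \eqref{eq:regularization} we have $\Green(\psi) \in \cC(\overline\Omega)$, and since $\Green(\psi)$ solves the Dirichlet problem \eqref{eq:Laplace with L}, its boundary decay is encoded in \eqref{eq:estimate for G}: the factor $(\Dirac(x)\Dirac(y)/|x-y|^2 \wedge 1)^\gamma$ tends to $0$ as $\Dirac(x) \to 0$, which forces $\Green(\psi)(x) \to 0$ as $x \to \partial\Omega$. Hence $\Green(\psi) \in C_0(\Omega)$, precisely the predual of $\cM(\Omega)$ for the weak-$\star$ topology. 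Using \eqref{eq:Laplace vwf},
\begin{equation*}
\int_\Omega \Green(\mu_n)\,\psi = \int_\Omega \Green(\psi)\,\mathrm{d}\mu_n \longrightarrow \int_\Omega \Green(\psi)\,\mathrm{d}\mu = \int_\Omega \Green(\mu)\,\psi,
\end{equation*}
where the convergence is exactly weak-$\star$ convergence tested against the admissible function $\Green(\psi) \in C_0(\Omega)$.

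To conclude I would argue by subsequences. Let $\Green(\mu_{n_k}) \rightharpoonup w$ be any weakly convergent subsequence in $L^1(\Omega)$, which exists by the first step. The previous display shows $\int_\Omega w\,\psi = \int_\Omega \Green(\mu)\,\psi$ for every $\psi \in L^\infty_c(\Omega)$; since $L^\infty_c(\Omega)$ separates points of $L^1(\Omega)$ (taking $\psi = \sign(w-\Green(\mu))\,\one_K$ over an exhaustion by compacts $K \Subset \Omega$ forces $w = \Green(\mu)$ a.e.), every subsequential limit equals $\Green(\mu)$. Relative weak compactness together with uniqueness of the subsequential limit then yields $\Green(\mu_n) \rightharpoonup \Green(\mu)$ in $L^1(\Omega)$ for the full sequence.

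I expect the only genuine obstacle to be the boundary step, namely proving $\Green(\psi) \in C_0(\Omega)$ rather than merely $\Green(\psi) \in \cC(\overline\Omega)$. This is exactly what upgrades the weaker weak-$\star$ hypothesis (against $C_0(\Omega)$ alone) to the $L^1$ conclusion, and it is where the Dirichlet condition, equivalently the vanishing factor in the kernel estimate \eqref{eq:estimate for G}, must be invoked; the regularization \eqref{eq:regularization} by itself only yields continuity up to the boundary, not vanishing there.
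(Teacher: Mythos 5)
Your proof follows essentially the same route as the paper's: Banach--Steinhaus gives a uniform bound on $\|\mu_n\|_{\cM}$, the Dunford--Pettis estimate gives equiintegrability and weak relative compactness in $L^1(\Omega)$, and every subsequential limit is identified through the duality identity $\int_\Omega \Green(\mu_n)\psi = \int_\Omega \Green(\psi)\,\mathrm{d}\mu_n$ followed by uniqueness of the limit. The one place you go beyond the paper is worthwhile: the paper simply writes ``passing to the limit,'' whereas you correctly observe that this step requires $\Green(\psi)\in C_0(\Omega)$ (not merely $\cC(\overline\Omega)$) for weak-$\star$ convergence to be applicable, and your justification via the vanishing factor $\left(\Dirac(x)\Dirac(y)/|x-y|^2\wedge 1\right)^\gamma$ in \eqref{eq:estimate for G} --- which for $\psi$ of compact support even gives uniform decay of order $\Dirac(x)^\gamma$ --- closes that small gap.
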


\begin{proof}
	If $\mu_n \rightharpoonup \mu$ weak-$\star$ then $\| \mu_n \|_{\cM} $ is bounded. Thus, $\Green (\mu_n)$ is equiintegrable. Taking a convergent subsequence $\Green (\mu_n) \rightharpoonup \ulim$. Substituting in the formulation
	\begin{equation}
		\int_ \Omega \Green( \mu_n) \psi = \langle \Green (\psi) , \mu_n \rangle .
	\end{equation}
	Passing to the limit
	\begin{equation}
	\int_ \Omega \ulim \psi = \langle \Green (\psi) , \mu \rangle .
	\end{equation}
	Thus $\ulim = \Green (\psi)$. The limit of every subsequence coincides so there is a limit.
\end{proof}

\subsection{Local scaling}

The scaling of
$
	\int_{ B_ \rho } \Green( \mu ) \dx
$
as $\rho \to 0$ will be very significant.

\subsubsection{Away from $\supp \mu$}

\begin{lemma}
	If $\supp \mu \cap B_R (x) = \emptyset$ then
	\begin{equation}
		\int_ { B _\rho (x) } \Green( \mu) \dx \le C (R-\rho)^{2s-n} \rho^n , \qquad \forall \rho < R
	\end{equation}
\end{lemma}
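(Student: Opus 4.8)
The plan is to reduce the bound to an elementary pointwise estimate on the Green kernel via the self-adjoint (weak-dual) characterization of $\Green(\mu)$, rather than a direct Fubini manipulation. Since for a measure $\Green(\mu)$ is only defined as an $L^1$-limit (\Cref{thm:extension of G to measures}), I would test the identity \eqref{eq:Laplace vwf} with $\psi=\one_{B_\rho(x)}$. Assuming $\rho$ small enough that $\overline{B_\rho(x)}\subset\Omega$ (otherwise replace $B_\rho(x)$ by $B_\rho(x)\cap\Omega$ and, if needed, approximate $\one_{B_\rho(x)}$ from inside by functions in $L^\infty_c(\Omega)$), this yields
\[
\int_{B_\rho(x)}\Green(\mu)=\int_\Omega\Green(\one_{B_\rho(x)})\,\mathrm{d}\mu=\int_\Omega\left(\int_{B_\rho(x)}\G(y,z)\,\mathrm{d}z\right)\mathrm{d}\mu(y),
\]
where the inner integral is just $\Green(\one_{B_\rho(x)})(y)$ written out with the kernel. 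This moves all the work onto estimating $\Green(\one_{B_\rho(x)})(y)$ for $y$ in the support of $\mu$.

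Next I would exploit the separation of supports. For $y\in\supp\mu$ we have $|y-x|\ge R$, while $z\in B_\rho(x)$ forces $|z-x|<\rho$; hence $|y-z|\ge R-\rho>0$ for all such pairs. Using only the upper half of \eqref{eq:estimate for G}, and bounding the boundary factor $\big(\tfrac{\Dirac(x)\Dirac(y)}{|x-y|^{2}}\wedge 1\big)^{\gamma}$ by $1$, I get $\G(y,z)\le C|y-z|^{2s-n}\le C(R-\rho)^{2s-n}$, which is legitimate because $n\ge 3$ and $0<s\le 1$ give $n-2s\ge 1>0$. Integrating in $z$ over $B_\rho(x)$ then gives the $y$-uniform bound
\[
\Green(\one_{B_\rho(x)})(y)=\int_{B_\rho(x)}\G(y,z)\,\mathrm{d}z\le C(R-\rho)^{2s-n}\,|B_\rho(x)|=C\,\omega_n\,(R-\rho)^{2s-n}\rho^n .
\]

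Finally, integrating this uniform estimate against $\mu$ gives $\int_{B_\rho(x)}\Green(\mu)\le C\omega_n(R-\rho)^{2s-n}\rho^n\,\|\mu\|_{\cM}$, which is the claimed inequality (the constant absorbing $\omega_n$ and $\|\mu\|_{\cM}$; in the intended application $\mu$ is a unit point mass). I do not expect a genuine obstacle: the only point requiring care is the passage from the abstract $L^1$-definition of $\Green(\mu)$ to the kernel representation of $\int_{B_\rho(x)}\Green(\mu)$, which is exactly what testing \eqref{eq:Laplace vwf} with an indicator supplies, together with the mild domain issue when $B_\rho(x)$ meets $\partial\Omega$. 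If one prefers, for $\mu\ge 0$ the same identity follows directly from Tonelli's theorem applied to the nonnegative kernel, but routing it through \eqref{eq:Laplace vwf} keeps the argument consistent with the definitions used so far.
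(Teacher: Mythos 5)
Your argument is correct and complete: testing the weak-dual identity \eqref{eq:Laplace vwf} with $\psi=\one_{B_\rho(x)}$, using the support separation $|y-z|\ge R-\rho$ together with the upper bound in \eqref{eq:estimate for G} (valid since $n-2s>0$), and integrating against $\mu$ gives exactly the claimed estimate, with the constant absorbing $\|\mu\|_{\cM}$. The paper states this lemma without proof, treating it as immediate from the kernel representation, and your duality-plus-pointwise-kernel-bound route is precisely the intended verification; your remarks on the $L^\infty_c$ test-function issue and on signed $\mu$ are appropriate but not substantive obstacles.
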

\begin{remark}
	Notice is the natural behaviour at a Lebesgue point since it implies that
	\begin{equation}
		\limsup_{ \rho \to 0 } \frac{1}{|B_\rho|} \int_{ B_\rho} \Green (\mu) \le  C R^{2s-n}
	\end{equation}
\end{remark}

\subsubsection{The sequence $\Green (\one_{B_\rho})$}
Our aim is to show
\begin{proposition}
	Let $x_0 \in \Omega$ and $B_\rho = B_\rho (x_0)$. The following hold
	\begin{subequations}
		\label{eq:G one rho}
	\begin{align}
		\label{eq:G one rho at 0}
		\frac{ \Green (\one_{B_\rho}) }{ \rho^{2s} }(x_0) & \ge c > 0\\
		\label{eq:G one rho L1}
		\frac{ \Green (\one_{B_\rho}) }{ \rho^{2s} } &\to 0   \qquad L^1( \Omega ) \\
		\label{eq:G one rho Linf}
		\frac{ \Green (\one_{B_\rho}) }{ \rho^{2s} } &\rightharpoonup 0  \qquad L^\infty( \Omega )\textrm{-weak-}\star \\
		\label{eq:G one rho pointwise}
		\frac{ \Green (\one_{B_\rho}) }{ \rho^{2s} } &\to 0   \qquad \textrm{ pointwise in } \Omega \setminus \{x_0\}. \\
		\label{eq:G one rho measure}
		\int_ \Omega  \frac{ \Green (\one_{B_\rho}) }{ \rho^{2s} } \mathrm{d} \mu  & \to 0 \qquad \textrm{ for every } \mu \in \cM (\Omega) \textrm{ such that } \mu (\{x_0\}) = 0.
	\end{align}
	\end{subequations}
\end{proposition}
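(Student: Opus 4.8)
Write $w_\rho \defeq \Green(\one_{B_\rho})/\rho^{2s}$, where $B_\rho = B_\rho(x_0)$ and $\rho$ is small enough that $\overline{B_\rho}\subset\Omega$. The backbone of the argument is a pair of pointwise facts about $w_\rho$: a \emph{uniform} two-sided-controlled upper bound $0\le w_\rho\le C$ on $\Omega$, and the pointwise decay $w_\rho\to0$ on $\Omega\setminus\{x_0\}$. Once these are in hand, the $L^1$, weak-$\star$ and measure statements \eqref{eq:G one rho L1}, \eqref{eq:G one rho Linf}, \eqref{eq:G one rho measure} all drop out of dominated convergence against the appropriate measure, while the lower bound \eqref{eq:G one rho at 0} is an independent computation at the centre.

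The main work is the uniform upper bound, and here I would stress that the off-the-shelf estimate \eqref{eq:Green of indicator function} is \emph{not} enough: with $A=B_\rho$ it gives $\|w_\rho\|_{L^\infty}\le C\rho^{n\beta-2s}$ for $\beta<\frac{2s}n$, and since $n\beta-2s<0$ the normalisation by $\rho^{2s}$ would blow up. Instead I would use the sharp kernel bound contained in \eqref{eq:estimate for G}, namely $\G(x,y)\le C|x-y|^{2s-n}$ (the cut-off factor being $\le1$), to write
\begin{equation}
\Green(\one_{B_\rho})(x) \le C\int_{B_\rho(x_0)} |x-y|^{2s-n}\,\dx,
\end{equation}
and bound the right-hand side by $C\rho^{2s}$ \emph{uniformly} in $x\in\Omega$. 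This is elementary once one splits into $x\in B_{2\rho}(x_0)$ (then $B_\rho(x_0)\subset B_{3\rho}(x)$, so the integral is $\le C\rho^{2s}$) and $|x-x_0|\ge2\rho$ (then $|x-y|\ge\tfrac12|x-x_0|\ge\rho$ on $B_\rho(x_0)$, so the integral is $\le C\rho^{2s-n}|B_\rho|=C\rho^{2s}$); equivalently one may invoke the Riesz rearrangement inequality, whose upshot is that the Riesz potential of a ball is maximised at its centre. Dividing by $\rho^{2s}$ yields $0\le w_\rho\le C$ with $C$ independent of $\rho$.

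For the lower bound \eqref{eq:G one rho at 0}, since $x_0$ is interior, for $\rho<\Dirac(x_0)/2$ every $y\in B_\rho(x_0)$ satisfies $\Dirac(y)\ge\Dirac(x_0)-\rho\ge\Dirac(x_0)/2$ and $|x_0-y|<\rho$, so $\Dirac(x_0)\Dirac(y)/|x_0-y|^2\ge\Dirac(x_0)^2/(2\rho^2)>1$ and the cut-off factor in \eqref{eq:estimate for G} equals $1$; the lower estimate then gives $\Green(\one_{B_\rho})(x_0)\ge c\int_{B_\rho}|x_0-y|^{2s-n}\,\dx=c'\rho^{2s}$, i.e. $w_\rho(x_0)\ge c'>0$. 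For the decay \eqref{eq:G one rho pointwise}, fix $x\ne x_0$, set $d=|x-x_0|$; for $\rho<d/2$ one has $|x-y|\ge d/2$ on $B_\rho(x_0)$, whence $\Green(\one_{B_\rho})(x)\le C(d/2)^{2s-n}|B_\rho|=C_d\rho^n$ and $w_\rho(x)\le C_d\rho^{n-2s}\to0$, using $n>2s$.

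The three limits now follow by dominated convergence from $0\le w_\rho\le C$ together with $w_\rho\to0$ pointwise off $x_0$. For \eqref{eq:G one rho L1} we integrate against Lebesgue measure on the bounded set $\Omega$ (the exceptional point is negligible), so $\|w_\rho\|_{L^1}=\int_\Omega w_\rho\to0$; as a check, \eqref{eq:Green self adjoint} gives $\int_\Omega w_\rho=\rho^{-2s}\int_{B_\rho}\Green(\one_\Omega)\le\rho^{-2s}\|\Green(\one_\Omega)\|_{\cC}\,|B_\rho|=C\rho^{n-2s}$. For \eqref{eq:G one rho Linf}, given $g\in L^1(\Omega)$ we dominate $|w_\rho g|\le C|g|\in L^1(\Omega)$, so $\int_\Omega w_\rho g\to0$. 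For \eqref{eq:G one rho measure}, the hypothesis $\mu(\{x_0\})=0$ forces $|\mu|(\{x_0\})=0$, so $w_\rho\to0$ holds $|\mu|$-a.e.; since $0\le w_\rho\le C$ and $|\mu|(\Omega)<\infty$, dominated convergence against $|\mu|$ (equivalently against $\mu^+$ and $\mu^-$) gives $\int_\Omega w_\rho\,\mathrm d\mu\to0$. The single genuinely delicate ingredient is the uniform $L^\infty$ bound of the second paragraph; the role of the hypothesis $\mu(\{x_0\})=0$ is precisely to render the one point where pointwise decay fails $|\mu|$-negligible, after which the remaining statements are soft.
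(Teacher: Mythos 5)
Your proof is correct and follows essentially the same route as the paper's: the uniform bound $0\le\Green(\one_{B_\rho})\le C\rho^{2s}$ deduced from the kernel estimate \eqref{eq:estimate for G}, the reversed inequality at the centre for \eqref{eq:G one rho at 0}, the sharper decay estimate off $x_0$ for \eqref{eq:G one rho pointwise}, and dominated convergence against $|\mu|$ for \eqref{eq:G one rho measure}. The only (harmless) divergences are that you spell out details the paper leaves implicit — the uniform-in-$x$ bound $\int_{B_\rho(x_0)}|x-y|^{2s-n}\,dy\le C\rho^{2s}$ and the verification that the boundary cut-off factor in \eqref{eq:estimate for G} equals one near an interior point — and that you obtain \eqref{eq:G one rho L1} by dominated convergence rather than by the $L^1\to L^1$ continuity of $\Green$ applied to $\one_{B_\rho}/\rho^{2s}$, which you nonetheless record as a cross-check.
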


\begin{proof}
	\begin{align}
		0 \le \Green (\one_{B_\rho}) (x) \le \int_ {B_\rho} \G(x,y) dy \le C \int_ {B_\rho} |x-y|^{2s-n} dy \le C \int_{ B_\rho } |y|^{2s - n}dy = C \rho^{2s}.
	\end{align}
	Furthermore, at $x = x_0$ this inequality hold in reverse order hold (except for the first), and \eqref{eq:G one rho at 0} is proven.
	Therefore $\| \Green (\one_{B_\rho})  \|_{L^\infty}$ is bounded. Furthermore
	\begin{equation}
		\int_ \Omega \frac{ \one_{B_\rho} }{\rho^{2s}} = C \rho^{n-2s} \to 0 .
	\end{equation}
	Therefore, due the strong continuity \eqref{eq:G one rho L1} is proven. But then the limit coincides with the weak-$\star$ limit in $L^\infty$, so \eqref{eq:G one rho Linf} is proven. For $x \neq x_0$ we have the sharper estimate, for $\rho < |x-x_0|$
	\begin{align}
		 \frac{ \Green (\one_{B_\rho}) (x)}{\rho^{2s}} \le C \rho ^{-2s} \int_ {B_\rho} |x-y|^{2s-n} dy \le C (|x-x_0| - \rho)^{2s-n} |\rho|^{n-2s} \to 0.
	\end{align}
	
	To prove \eqref{eq:G one rho measure} we assume first that $\mu \ge 0$. When $\mu (\{x_0\}) = 0$ we have that
	\begin{equation}
		0 \le \frac{ \Green (\one_{B_\rho}) (x)}{\rho^{2s}} \le C
	\end{equation}
	and
	\begin{equation}
		\mu \left( \left  \{  x \in \Omega :  \frac{ \Green (\one_{B_\rho}) (x)}{\rho^{2s}} = 0  \right \}  \right ) \le \mu (\{0\}) = 0.
	\end{equation}
	Therefore, the convergence is $\mu$-everywhere. By the Dominated Convergence Theorem we have \eqref{eq:G one rho measure}. When $\mu$ changes sign we reproduce the argument for $\mu^+$ and $\mu^-$ and the result is proven.
\end{proof}

\begin{remark}
	Notice that this is the scaling as $\cC$ function. Obviously $\one_{B_\rho(x)} / |B_\rho (x)|$
	
\end{remark}

\subsubsection{Near the $\supp \mu$}

\begin{proposition}
	\label{thm:local integral of G mu near support}
	Let $\mu \in \cM (\Omega)$. Then
	\begin{equation}
		  \lim_{ \rho \to 0 }\rho^{-2s} \int \limits_{ B_\rho (x) } \Green(\mu) \dx \asymp \mu (\{x\}).
	\end{equation}
\end{proposition}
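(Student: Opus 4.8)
The plan is to transfer the ball integral onto the measure by means of the self-adjoint (very weak) formulation, and then to read off the behaviour from the fine properties of the sequence $\Green(\one_{B_\rho})$ collected in \eqref{eq:G one rho}. Since $\one_{B_\rho(x)} \in L^\infty_c(\Omega)$, the identity \eqref{eq:Laplace vwf} characterizing the extension $\Green : \cM(\Omega) \to L^1(\Omega)$ gives
\[
\int_{B_\rho(x)} \Green(\mu)\,\dx = \int_\Omega \Green(\one_{B_\rho(x)})\,\mathrm{d}\mu ,
\]
so that, after dividing by $\rho^{2s}$,
\[
\rho^{-2s}\int_{B_\rho(x)}\Green(\mu)\,\dx = \int_\Omega \frac{\Green(\one_{B_\rho(x)})}{\rho^{2s}}\,\mathrm{d}\mu .
\]
This is the key reduction: it turns a statement about the local mass of $\Green(\mu)$ near $x$ into a statement about testing the explicit, already-understood sequence $\rho^{-2s}\Green(\one_{B_\rho(x)})$ against $\mu$.

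Next I would split $\mu$ according to its atom at $x$, writing $\mu = \mu(\{x\})\,\Dirac_x + \nu$ with $\nu \defeq \mu - \mu(\{x\})\Dirac_x$, so that $\nu(\{x\}) = 0$. By linearity the right-hand side becomes
\[
\mu(\{x\})\,\frac{\Green(\one_{B_\rho(x)})(x)}{\rho^{2s}} + \int_\Omega \frac{\Green(\one_{B_\rho(x)})}{\rho^{2s}}\,\mathrm{d}\nu .
\]
The second term tends to $0$ as $\rho \to 0$: this is precisely \eqref{eq:G one rho measure} applied to $\nu$, which is legitimate because $\nu$ carries no mass at $x$ (and the cited statement already covers signed measures by treating $\nu^+$ and $\nu^-$ separately through the Jordan decomposition). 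For the first term, \eqref{eq:G one rho at 0} supplies the lower bound $\rho^{-2s}\Green(\one_{B_\rho(x)})(x) \ge c > 0$, while the pointwise upper estimate $\Green(\one_{B_\rho(x)})(x) \le C\rho^{2s}$ established in the proof of that same proposition provides the matching upper bound; hence $\rho^{-2s}\Green(\one_{B_\rho(x)})(x) \asymp 1$, with constants depending only on the kernel bound \eqref{eq:estimate for G} evaluated at the interior point $x$ (where the boundary weight is bounded away from $0$ and $\infty$, so $\G(x,\cdot) \asymp |x-\cdot|^{2s-n}$ near $x$).

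Combining the two contributions, the remainder vanishes and the atomic part survives multiplied by a factor bounded above and below by positive constants, which yields $\rho^{-2s}\int_{B_\rho(x)}\Green(\mu)\,\dx \asymp \mu(\{x\})$ in the limit. I expect the only genuinely delicate point to be the interpretation of the conclusion: the estimate $\rho^{-2s}\Green(\one_{B_\rho(x)})(x) \asymp 1$ controls the prefactor from both sides but does not by itself force convergence of that prefactor, so the statement should be understood as asserting that $\liminf$ and $\limsup$ of the left-hand side are each comparable to $\mu(\{x\})$ (both equal to $0$ in the case $\mu(\{x\}) = 0$, which is where the vanishing of the $\nu$-term does all the work). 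The remaining care is simply the bookkeeping of signs when invoking \eqref{eq:G one rho measure} for the signed measure $\nu$.
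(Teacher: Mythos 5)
Your argument is correct and follows essentially the same route as the paper: both proofs use the self-adjointness of $\Green$ to test $\rho^{-2s}\Green(\one_{B_\rho(x)})$ against $\mu$, split off the atom via $\mu = \mu(\{x\})\Dirac_x + \nu$ with $\nu(\{x\})=0$, kill the $\nu$-term by \eqref{eq:G one rho measure}, and estimate the atomic contribution by the kernel bound (your evaluation of $\Green(\one_{B_\rho(x)})(x)$ is, by \eqref{eq:G is symmetric}, the same integral $\int_{B_\rho(x)}\G(y,x)\,dy\asymp\rho^{2s}$ that the paper computes). Your remark that the $\asymp$ in the limit should be read at the level of $\liminf$ and $\limsup$ is a fair clarification that the paper leaves implicit.
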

\begin{proof}
	Assume $\mu(\{x\}) = 0$. Since $\Green$ is self-adjoint
	\begin{equation}
	 	\rho^{-2s} \int_{ B_\rho (x) } \Green(\mu) =\rho^{-2s} \int_{ \Omega } \Green(\mu) \one_{B_ \rho (x)} =  \rho^{-2s} \int_ \Omega  \Green( \one_{B_ \rho (x)} ) d \mu \to 0
	 \end{equation}
	 due to \eqref{eq:G one rho measure}.

	On the other hand let us compute
	\begin{align}
	\int_{ B_\rho (x) } \Green (\Dirac_x) &= \int_{ B_\rho (x) } \G(y, x) \mathrm dy \asymp \int_{ B_\rho (x) } |x-y|^{n-2s} \mathrm dy \nonumber \\
	& = C \int_{0}^\rho r^{2s-n} r^{n-1} \mathrm dr  = C \rho^{2s}.
	\end{align}
	
	Therefore, for a general measure $\mu$ we can decompose
	\begin{equation}
		\Green( \mu ) = \Green \Big(\mu - \mu(\{x\}) \, \Dirac_x \Big) + \mu(\{x\}) \, \Green( \Dirac_x ).
	\end{equation}
	Applying the two preceding parts the result is proven.
\end{proof}

\subsection{Almost everywhere approximation of $\Green (\delta_{x_0})$}

\begin{lemma}
	\label{lem:ae aprox of G delta 0}
	We have that
	\begin{equation}
		\Green \left(   	\frac{\one_{B_{\frac 1 k} (x_0)} }{|B_{\frac 1 k} (x_0)|} \right) \to \Green (\delta_{x_0} ) , \qquad \textrm{ a.e. in } \Omega.
	\end{equation}
	
\end{lemma}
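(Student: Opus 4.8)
The plan is to pass to the explicit kernel representation on both sides and thereby reduce the statement to a pointwise convergence of averages of $y\mapsto\G(x,y)$ around $x_0$. Writing $\rho=1/k$ and $\mu_\rho=\one_{B_\rho(x_0)}/|B_\rho(x_0)|$, the integral formula \eqref{eq:integral expression of Green} together with \Cref{thm:extension of G to measures} and the symmetry \eqref{eq:G is symmetric} give
\[
\Green(\mu_\rho)(x)=\frac{1}{|B_\rho(x_0)|}\int_{B_\rho(x_0)}\G(x,y)\,dy,
\qquad
\Green(\delta_{x_0})(x)=\G(x,x_0).
\]
The second identity holds because $\G(\cdot,x_0)\in L^1(\Omega)$ by \eqref{eq:estimate for G} and, for every $\psi\in L^\infty_c(\Omega)$, $\int_\Omega \G(\cdot,x_0)\psi=\Green(\psi)(x_0)=\int_\Omega \Green(\psi)\,\mathrm{d}\delta_{x_0}$, which is exactly the characterization \eqref{eq:Laplace vwf}. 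Thus the lemma is equivalent to proving that for a.e. $x\in\Omega$,
\[
\frac{1}{|B_\rho(x_0)|}\int_{B_\rho(x_0)}\G(x,y)\,dy\longrightarrow \G(x,x_0),\qquad \rho\to 0.
\]

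First I would discard the single point $x=x_0$, which is negligible, and fix $x\neq x_0$. For $\rho<|x-x_0|$ the ball $B_\rho(x_0)$ stays away from the diagonal singularity $\{y=x\}$, so by \eqref{eq:estimate for G} the integrand $y\mapsto\G(x,y)$ is bounded on $B_\rho(x_0)$; moreover it is continuous there, the kernel being continuous off the diagonal for all the admissible operators of \Cref{sec:examples laplacians} (interior regularity, cf. the references cited after \eqref{eq:regularization}). Continuity of $y\mapsto\G(x,y)$ at $y=x_0$ then forces the shrinking averages to converge to the value $\G(x,x_0)$. Since this holds for every $x\neq x_0$, it holds almost everywhere, which is the claim.

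As a consistency check that fixes the candidate limit independently, note that $\mu_\rho\rightharpoonup\delta_{x_0}$ weak-$\star$ in $\cM(\Omega)$, so \Cref{prop:G continuous weak star measure to weak L1} yields $\Green(\mu_\rho)\rightharpoonup\Green(\delta_{x_0})$ in $L^1(\Omega)$; hence the almost-everywhere limit and the weak limit must coincide and equal $\Green(\delta_{x_0})$.

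The main obstacle is precisely the regularity of $\G$ away from the diagonal: what is genuinely needed is that $x_0$ be a Lebesgue (indeed continuity) point of $y\mapsto\G(x,y)$ for a.e. $x\neq x_0$. For the concrete operators this is the standard off-diagonal continuity of the Green kernel. In a purely abstract framework one is tempted to replace it by the Lebesgue differentiation theorem combined with a Fubini argument on the measurable set of non-Lebesgue points of $\G$; the delicate point is that such an argument controls the center of the averaging balls only for almost every $x_0$, whereas the lemma fixes $x_0$ once and for all, so some genuine continuity of the kernel off the diagonal cannot be dispensed with.
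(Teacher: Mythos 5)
Your proof is correct and follows essentially the same route as the paper's: write $\Green(\one_{B_{1/k}(x_0)}/|B_{1/k}(x_0)|)(x)$ as the average of $y\mapsto\G(x,y)$ over $B_{1/k}(x_0)$, fix $x\neq x_0$ so that the ball eventually avoids the diagonal singularity, and pass to the limit to get $\G(x,x_0)=\Green(\delta_{x_0})(x)$. The only difference is in the justification of that last limit: the paper asserts that $x_0$ is a Lebesgue point of $\G(x,\cdot)$ merely because this function is in $L^\infty$ near $x_0$ (which, as you correctly observe, is not by itself sufficient for a \emph{fixed} center $x_0$), whereas you supply the missing ingredient --- off-diagonal continuity of the kernel, valid for all the admissible examples --- so your write-up is in fact slightly more careful than the paper's on this point.
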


\begin{proof}
	Assume $x \neq x_0$.  For $k_0$ large enough $x \notin \overline {B_{\frac 1 {k_0}} (x_0)}$. Then, due to \eqref{eq:estimate for G}, $\G(x, \cdot) \in L^\infty ( B_{\frac 1 {k_0}} (x_0) )$. Hence $x_0$ is a Lebesgue point of $\G(x, \cdot)$. Due to the Lebesgue integration theorem we have that
	\begin{equation}
	\Green \left(   	\frac{\one_{B_{\frac 1 k} (x_0)} }{|B_{\frac 1 k} (x_0)|} \right) (x) = \frac{1}{|B_{\frac 1 k} (x_0)|} \int_{B_{\frac 1 k} (x_0)} \G (x,y) dy \to \G (x,x_0) = \Green( \Dirac_{x_0} ) (x)
	\end{equation}
	This completes the proof.
\end{proof}

\section{Equivalent definitions of solution}
\label{sec:definitions}

We discuss the definition of dual, weak-dual and very weak solutions for the problem with and without a potential $V$.
\subsection{Problem \eqref{eq:Laplace with L}.}
Brezis introduced the notion of \emph{very weak solution} for the classical case $s=1$ as
\begin{equation}
\int_ \Omega u (-\Delta \varphi) = \int_ \Omega f \varphi, \qquad \forall \varphi \in W^{2,\infty}(\Omega) \cap W^{1,\infty}_0 (\Omega)
\end{equation}
Chen and V\'eron \cite{chen+veron2014} extended this definition to the Restricted Fractional Laplacian as
	\begin{gather}
	\int_ \Omega u (-\Delta)^s_\RFL \varphi = \int_ \Omega f \varphi, \qquad \forall \varphi \in \mathbb X_s
	\end{gather}
where
\begin{equation}
\mathbb X_s = \{  \varphi \in \cC^s( \mathbb R^n ) : \varphi = 0 \textrm{ in } \mathbb R^n \setminus \Omega \textrm{ and } (-\Delta)^s \varphi \in L^\infty (\Omega)  \}
\end{equation}

Letting $\psi = (-\Delta)^s_\RFL \varphi \in L^\infty (\Omega)$, which implies that $\varphi = \Green(\psi)$ this is equivalent to writing
\begin{equation}
\int_ \Omega u \psi = \int_ \Omega f \Green( \psi ) \qquad \forall \psi \in L^\infty (\Omega).
\end{equation}

In some texts (see \cite{bonforte+figalli+vazquez2018}) the authors have used this as a new definition of solution  of \eqref{eq:Laplace with L} for more general operators, and they usually call this \emph{weak dual solution}. It has the advantage that one needs not worry about fancy spaces of test functions, but only on the nature of $\Green$. Furthermore, the treatment of different fractional Laplacians is unified.

Notice that, whenever $\Green(f)$ is defined, since $\Green$ is self-adjoint this is equivalent to
\begin{equation}
	\int_ \Omega u \psi = \int_ \Omega \Green(f) \psi \qquad \forall \psi \in L^\infty (\Omega).
\end{equation}
and since $u$ and $\Green(f)$ are in $L^1(\Omega)$ this is simply
\begin{equation}
	u = \Green(f)
\end{equation}

\subsection{Problem \eqref{eq1}.}
For the Schr\"odinger problem the notion of very weak solution for the classical case was used multiple times in the literature (see, e.g., \cite{diaz+gc+rakotoson+temam:2018veryweak} and the references therein) as
\begin{subequations}
\begin{gather}
	Vu \in L^1 (\Omega), \\
	\int_ \Omega u (-\Delta \varphi) + \int_ \Omega Vu \varphi = \int_ \Omega f \varphi, \qquad \forall \varphi \in W^{2,\infty}(\Omega) \cap W^{1,\infty}_0 (\Omega)
\end{gather}
\end{subequations}
We extended this notion in \cite{diaz+g-c+vazquez2018} to the case $(-\Delta)^s_\RFL$ by using the definition
\begin{subequations}
	\begin{gather}
	Vu \in L^1 (\Omega), \\
	\int_ \Omega u (-\Delta)^s_\RFL \varphi + \int_ \Omega Vu \varphi = \int_ \Omega f \varphi, \qquad \forall \varphi \in \mathbb X_s
	\end{gather}
\end{subequations}
The corresponding notion of weak-dual solution is very naturally
\begin{subequations}
	\begin{gather}
	Vu \in L^1 (\Omega), \\
	\int_ \Omega u \psi + \int_ \Omega Vu \Green( \psi)  = \int_ \Omega f \Green(\psi), \qquad \forall \psi \in L^\infty (\Omega).
	\end{gather}
\end{subequations}
Again, this notion is equivalent to our definition of \emph{dual} solution.


\section{Theory for $(f, V) \in L^1 (\Omega) \times L^\infty_+ (\Omega)$}
\label{sec:existence for V bounded}

Rather complete results are obtained for bounded potentials and integrable data.

\subsection{Existence. Fixed-point approach}

Here we show the following
\begin{theorem}
	\label{eq:existence fixed point}
	Let $f \in L^1 (\Omega)$ and $V \in L^\infty_+(\Omega)$. Then, there exists a solution $u$ of \eqref{eq:fixed point formulation} and it satisfies
	\begin{equation}
		|u| \le \Green (|f|)
	\end{equation}
	Furthermore, if $f \ge 0$ then $u \ge 0$.
\end{theorem}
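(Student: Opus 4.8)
The plan is to realize a solution of \eqref{eq:fixed point formulation} as a fixed point of the affine map
\[
T(u) = \Green(f - Vu), \qquad Ku := \Green(Vu),
\]
so that $u$ solves \eqref{eq:fixed point formulation} exactly when $(I+K)u = \Green(f)$. Since $V \in L^\infty_+(\Omega)$, for $u \in L^1(\Omega)$ we have $Vu \in L^1(\Omega)$, hence $f - Vu \in L^1(\Omega)$ and, by \Cref{thm:regularization from L1}, $T(u) \in L^q(\Omega) \subset L^1(\Omega)$; thus $T, K : L^1(\Omega) \to L^1(\Omega)$ are well defined. I would first dispose of existence with $f$ bounded (say $f \in L^2(\Omega) \cap L^\infty(\Omega)$) and then recover the $L^1$ case by approximation, using the a priori bound below to pass to the limit.

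For existence with nice $f$, the operator $K$ is linear and, by the Dunford--Pettis property \Cref{thm:Dunford-Pettis for G}, compact on $L^1(\Omega)$; multiplication by the bounded $V$ preserves the equiintegrability, so $K$ is indeed compact. I would then check injectivity of $I+K$ via the coercivity \eqref{eq:coercivity}: if $(I+K)u = 0$ then $u = \Green(g)$ with $g = -Vu$, so
\[
0 \le \int_\Omega \Green(g)\, g = \int_\Omega u\,(-Vu) = -\int_\Omega V u^2 \le 0,
\]
which forces $Vu = 0$ a.e.\ and hence $u = -\Green(Vu) = 0$. By the Fredholm alternative, $I+K$ is invertible and $u = (I+K)^{-1}\Green(f)$ is the (unique) candidate solution.

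The sign property and the pointwise bound both follow once $(I+K)^{-1}$ is shown to be \emph{positivity preserving}. Granting this, if $f \ge 0$ then $\Green(f) \ge 0$ by \eqref{eq:monotonicity of Green}, whence $u = (I+K)^{-1}\Green(f) \ge 0$. For the bound, set $U = \Green(|f|) \ge 0$; using monotonicity of $\Green$,
\[
(I+K)(U - u) = \Green(|f| - f + VU) \ge 0, \qquad (I+K)(U + u) = \Green(|f| + f + VU) \ge 0,
\]
so inverse positivity gives $U \pm u \ge 0$, i.e.\ $|u| \le \Green(|f|)$. To prove that $(I+K)^{-1}$ preserves positivity, I would use that $K$ is a positive operator (composition of multiplication by $V \ge 0$ with the positivity-preserving $\Green$) and run the monotone iteration $w_0 = 0$, $w_{m+1} = h - K w_m$ for a nonnegative datum $h$: the even and odd iterates are monotone, remain nonnegative, and sandwich the unique solution of $(I+K)w = h$.

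The main obstacle I expect is precisely this comparison principle, namely inverse positivity of $I+K$ \emph{without any smallness assumption} on $\|V\|_{L^\infty}$, which is what the naive $L^1$-contraction estimate for $T$ would require; resolving it through positivity of $\Green$ and of multiplication by $V$, rather than through a norm bound, is the crux. A secondary technical point is that \eqref{eq:coercivity} is stated on $L^2(\Omega)$ while $\Green(f)$ for $f \in L^1(\Omega)$ need only lie in $L^q(\Omega)$ with $q < \tfrac{n}{n-2s}$; I would bridge this by carrying out the injectivity argument for $f \in L^2(\Omega)$ first and then extending to general $f \in L^1(\Omega)$ via a sequence $f_n \to f$ in $L^1(\Omega)$, for which the uniform bound $|u_n| \le \Green(|f_n|)$ together with \Cref{thm:Dunford-Pettis for G} yields equiintegrability of $(u_n)$ and a weakly convergent subsequence whose limit solves \eqref{eq:fixed point formulation} and inherits the bound $|u| \le \Green(|f|)$.
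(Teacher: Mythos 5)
Your route --- recasting the problem as $(I+K)u=\Green(f)$ with $Ku=\Green(Vu)$ and invoking the Fredholm alternative plus inverse positivity of $I+K$ --- is genuinely different from the paper's, which never uses Fredholm theory: the paper runs the monotone iteration $u_0=0$, $u_1=\Green(f)$, $u_i=\Green(f-Vu_{i-1})$ directly. As written, though, your argument has two gaps. The first concerns compactness of $K$ on $L^1(\Omega)$: \Cref{thm:Dunford-Pettis for G} gives equiintegrability of the image of a bounded set, i.e.\ relative \emph{weak} compactness, not norm compactness, and the Riesz--Schauder theory behind the Fredholm alternative needs the latter. This is repairable --- $L^1(\Omega)$ has the Dunford--Pettis property, so $K^2$ is norm compact and the Riesz theory applies to operators with a compact power; alternatively one can work in $L^2$ for bounded $f$ and get compactness there by splitting the kernel into a bounded part and a tail of small Schur norm --- but some such argument must be supplied, since the cited theorem does not give what you claim.

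The second gap is the serious one. Inverse positivity of $I+K$ does not follow from $K$ being a positive operator: for $K=\lambda P$ with $P$ a rank-one projection onto a nonnegative function and $\lambda$ large, $(I+\lambda P)^{-1}=I-\tfrac{\lambda}{1+\lambda}P$ fails to preserve positivity. Correspondingly, your iteration $w_0=0$, $w_{m+1}=h-Kw_m$ does \emph{not} have nonnegative iterates: already $w_2=h-Kh$ can be negative when $\|V\|_{L^\infty}$ is large, and without $w_0\le w_2$ the even iterates are not increasing, so the claimed sandwich collapses. The paper confronts exactly this point: it replaces the second iterate by $\Green\bigl((f-Vu_1)_+\bigr)$ to restart the monotonicity, obtains monotone even/odd subsequences converging to $\underline u\le\overline u$ with $\overline u=\Green(f-V\underline u)$ and $\underline u=\Green(f-V\overline u)$, and then --- this is the idea your sketch is missing --- observes that by linearity the \emph{average} $u=\tfrac12(\underline u+\overline u)$ is an exact solution of $u=\Green(f-Vu)$ satisfying $0\le u\le\Green(f)$; it never needs $\underline u=\overline u$, nor inverse positivity of $I+K$. (The case of sign-changing $f$ and the bound $|u|\le\Green(|f|)$ then follow by splitting $f=f_+-f_-$, exactly as you do at the end.) I would either adopt that averaging device, or derive positivity of your Fredholm solution a posteriori from uniqueness by comparing it with an iteratively constructed nonnegative solution; as it stands, proving inverse positivity of $I+K$ without smallness of $V$ is essentially equivalent to the statement you are trying to prove.
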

\begin{proof}
	\textbf{Step 1.} Assume $f \ge 0$.
	We construct the following sequence. $u_0 = 0$, $u_1  = \Green (f) \ge 0$,
	\begin{align}
	u_2 &= \Green \Bigg( \Big(f - V u_{1}\Big)_+ \Bigg), \\
	u_i &= \Green (f - V u_{i-1}), \qquad i > 2.
	\end{align}
	
	\textbf{Step 1a.} We prove that
	\begin{equation}
		u_0 \le u_2 \le u_3 \le u_1.
	\end{equation}
	Clearly $u_0 \le u_1$. Since
	\begin{equation}
	0 \le (f - V u_1 )_+ \le f .
	\end{equation}
	Thus, applying $\Green$, $u_0 \le u_2 \le u_1$. Therefore
	\begin{equation}
		f- V u_1 \le f- V u_2 \le f - V u_0
	\end{equation}
	Applying again $\Green$ we have
	\begin{equation}
		u_2 \le u_3 \le u_1.
	\end{equation}
	
	\textbf{Step 1b.} We show, by induction, that
	\begin{equation}
		u_{2i} \le u_{2i + 2} \le u_{2i + 3} \le u_{2i+1}, \qquad \forall i \ge 0.
	\end{equation}
	The result is true for $i = 0$ by the previous step. Assume the result true for $i$:
	\begin{equation}
		u_{2i} \le u_{2i + 2} \le u_{2i + 3} \le u_{2i+1}
	\end{equation}
	we have that
	\begin{equation}
		f - V u_{2i+1} \le f - V u_{2i+3} \le f - V_{2i+2} \le f - V u_{2i}.
	\end{equation}
	Applying $\Green$ we have that
	\begin{equation}
		u_{2(i+1)} \le u_{2(i+1)+ 2} \le u_{2i+3} \le u_{2i+1}
	\end{equation}
	Repeating the process
	\begin{equation}
		f - Vu_{2i+1} \le f - Vu_{2i+3} \le f - Vu_{2(i+1)+2}  \le f- Vu_{2(i+1)}.
	\end{equation}
	Applying $\Green$
	\begin{equation}
		u_{2(i+1)} \le u_{2(i+1)+ 2} \le u_{2(i+1) + 3} \le u_{2(i+1)+1}
	\end{equation}
	Then the result is true for $i+1$.
	This step is proven.
	
	\textbf{Step 1b.} By the monotone convergence theorem $u_{2i} \nearrow \underline u$ in $L^1 (\Omega)$ where $u_{2i+1} \searrow \overline u$ in $L^1 (\Omega)$. Clearly $0 \le \underline u \le \overline u \le \Green (f) $.
	Since $V \in L^\infty (\Omega)$ then $V u_{2i}$ and $V u_{2i+1}$ also converge in $L^1 (\Omega)$. Since $\Green$ is continuous in $L^1 (\Omega)$ we have
	\begin{gather}
	\overline u = \Green (f - V \underline u), \\
	\underline u = \Green (f - V \overline u).
	\end{gather}
	Therefore $u = \frac 1 2 (\underline u + \overline u)$ is a solution of
	\begin{equation}
	u = \Green (f - Vu).
	\end{equation}

	\textbf{Step 2.} Assume now that $f$ changes sign. We decompose $f = f_+ - f_-$ and solve for each $f_+$ and $f_-$, to obtain $u_1$ and $u_2$. Then, clearly $u = u_1 - u_2$ is a solution of the problem. Furthermore
	\begin{equation}
		|u| = |u_1-u_2| \le u_1 + u_2 \le \Green(f_+) + \Green (f_-) = \Green (|f|).
	\end{equation}
	
	This completes the proof.
\end{proof}

\subsection{Uniqueness}

\begin{theorem}
	\label{thm:uniqueness fixed point}
	$V \in L^\infty (\Omega)$.
	There exists at most one solution $u \in L^1 (\Omega)$ of  \eqref{eq:fixed point formulation}.
\end{theorem}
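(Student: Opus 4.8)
The plan is to argue by subtraction using linearity and then close with the coercivity hypothesis. Suppose $u_1,u_2\in L^1(\Omega)$ both solve \eqref{eq:fixed point formulation} for the same data $f$, and set $w=u_1-u_2$. Since $V\in L^\infty(\Omega)$ we have $Vu_1,Vu_2\in L^1(\Omega)$, hence $Vw\in L^1(\Omega)$, and subtracting the two identities $u_i=\Green(f-Vu_i)$ while using linearity of $\Green$ on $L^1(\Omega)$ (\Cref{thm:regularization from L1}) yields the homogeneous relation $w=-\Green(Vw)$. The whole task is then to show that this forces $w=0$.

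First I would upgrade the integrability of $w$ by a bootstrap identical to the one in \Cref{rem:eigenfunctions are continuous}. Starting from $w\in L^1(\Omega)$ and $V\in L^\infty(\Omega)$ we have $Vw\in L^1(\Omega)$, so by \Cref{thm:regularization}, $w=-\Green(Vw)\in L^q(\Omega)$ for every $q<\frac{n}{n-2s}$; since $V$ is bounded this gives $Vw\in L^q(\Omega)$, and applying $\Green$ again raises the integrability exponent by the fixed factor $\frac{n}{n-2s}>1$. After finitely many iterations the exponent crosses $(Q(1))'$, and the endpoint mapping $\Green:L^p\to L^\infty$ gives $w\in L^\infty(\Omega)$; in particular $Vw\in L^2(\Omega)$.

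Then I would invoke the positivity hypothesis \eqref{eq:coercivity}. Writing $g=Vw\in L^2(\Omega)$ and pairing the relation $w=-\Green(g)$ against $g$, one computes $\int_\Omega V w^2=\int_\Omega g\,w=-\int_\Omega g\,\Green(g)\le 0$ by \eqref{eq:coercivity}. Since $V\ge 0$ the left-hand side is nonnegative, so $\int_\Omega V w^2=0$, which forces $Vw=0$ a.e.\ in $\Omega$ (at a.e.\ point either $V=0$ or $w=0$). Feeding this back into the relation gives $w=-\Green(Vw)=-\Green(0)=0$, i.e.\ $u_1=u_2$.

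The only genuine difficulty is the regularity bootstrap that makes the coercivity pairing legitimate: a priori $Vw$ is merely $L^1$, while \eqref{eq:coercivity} is available only on $L^2(\Omega)$, so one must first promote $w$, hence $Vw$, into $L^2$. This is exactly where the boundedness of $V$ together with the smoothing of \Cref{thm:regularization} enter, and it is the step that will fail for a singular $V$, consistent with the separate treatment of singular potentials in the later sections. The sign condition $V\ge 0$ is equally essential, since it is what converts the coercivity inequality into the conclusion $Vw=0$.
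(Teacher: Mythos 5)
Your proposal is correct and follows essentially the same route as the paper: subtract the two solutions to get $w=-\Green(Vw)$, bootstrap integrability via the smoothing of \Cref{thm:regularization} as in \Cref{rem:eigenfunctions are continuous} (the paper stops once $w\in L^2(\Omega)$, which already suffices for the pairing, whereas you continue to $L^\infty$), and then conclude $Vw=0$ from the coercivity hypothesis \eqref{eq:coercivity} together with $V\ge 0$. No gaps.
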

\begin{proof}
	Let $u_1, u_2$ be two solutions. We proceed as in \Cref{rem:eigenfunctions are continuous}. Its difference $u = u_1-u_2 \in L^1(\Omega)$ satisfies $u = -\Green (Vu)$. Then $V u \in L^1 (\Omega)$ and $u \in L^{\underline Q(1)} (\Omega)$. Repeating this process we deduce that $u \in L^2 (\Omega)$.
	
	Therefore $Vu^2 = -Vu \Green (Vu) \in L^2 (\Omega)$. We deduce
	\begin{equation}
	0 \le \int_ \Omega V u^2 = -  \int_ \Omega Vu \Green (Vu) \le 0.
	\end{equation}
	due to \eqref{eq:coercivity}. Hence $Vu^2 = 0$ and so $V u= 0$. But then $u = - \Green(0) = 0$. The solutions $u_1$ and $u_2$ are equal.
\end{proof}

\subsection{The solution operator $\Green_V$}

\begin{corollary}
	Let $V \in L^\infty_+ (\Omega)$. We consider the  solution operator
	\begin{equation}
		\Green_V : f \in L^1 (\Omega) \mapsto u \in L^1(\Omega)\,,
	\end{equation}
	where $u$ is the unique solution of $u = \Green (f- Vu)$. It is  well-defined, linear and continuous.
\end{corollary}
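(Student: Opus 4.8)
The plan is to read off all three properties directly from the existence and uniqueness results already established, so that the corollary is essentially a repackaging. Well-definedness is immediate: \Cref{eq:existence fixed point} guarantees that for every $f \in L^1(\Omega)$ at least one solution $u$ of \eqref{eq:fixed point formulation} exists, while \Cref{thm:uniqueness fixed point} guarantees there is at most one. Hence the assignment $f \mapsto u$ is a single-valued map $L^1(\Omega) \to L^1(\Omega)$, so $\Green_V$ is well-defined.

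For linearity I would exploit uniqueness together with the linearity of the underlying operator $\Green$. Given $f, g \in L^1(\Omega)$ and scalars $\alpha, \beta$, set $u = \Green_V(f)$ and $v = \Green_V(g)$, and let $w = \alpha u + \beta v$. Since $Vu, Vv \in L^1(\Omega)$ we have $Vw \in L^1(\Omega)$, and since $\Green$ is linear,
\begin{equation}
w = \alpha \Green(f - Vu) + \beta \Green(g - Vv) = \Green\big( (\alpha f + \beta g) - Vw \big).
\end{equation}
Thus $w$ solves \eqref{eq:fixed point formulation} with data $\alpha f + \beta g$, and by uniqueness $w = \Green_V(\alpha f + \beta g)$. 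This is precisely linearity of $\Green_V$.

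For continuity it suffices to produce an $L^1$ bound, which linearity then upgrades to operator continuity. I would use the pointwise control $|u| \le \Green(|f|)$ furnished by \Cref{eq:existence fixed point}. Integrating, and using the symmetry \eqref{eq:G is symmetric} together with Tonelli's theorem (all integrands being nonnegative),
\begin{equation}
\| \Green_V(f) \|_{L^1(\Omega)} = \int_\Omega |u| \le \int_\Omega \Green(|f|) = \int_\Omega |f| \, \Green(\one_\Omega) \le \| \Green(\one_\Omega) \|_{L^\infty(\Omega)} \, \| f \|_{L^1(\Omega)},
\end{equation}
where the constant is finite because \eqref{eq:regularization} gives $\Green(\one_\Omega) \in \cC(\overline\Omega)$. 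Alternatively one may simply compose the bound $|u| \le \Green(|f|)$ with the continuity of $\Green : L^1(\Omega) \to L^q(\Omega)$ from \Cref{thm:regularization from L1} and the embedding $L^q(\Omega) \hookrightarrow L^1(\Omega)$ valid for bounded $\Omega$.

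None of these steps presents a genuine obstacle; this is the easy harvest after the real work (\Cref{eq:existence fixed point} and \Cref{thm:uniqueness fixed point}) is done. The only point requiring a little care is the bookkeeping that the continuity constant does not depend on $f$, and the choice of which clean estimate to present — the symmetry/Tonelli route above gives the sharp constant $\|\Green(\one_\Omega)\|_{L^\infty}$ without invoking the full regularization theory.
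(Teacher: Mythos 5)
Your proof is correct and is precisely the ``easy details'' the paper leaves to the reader: well-definedness from \Cref{eq:existence fixed point} and \Cref{thm:uniqueness fixed point}, linearity via uniqueness, and the $L^1$ bound from $|u|\le\Green(|f|)$ combined with self-adjointness. The paper gives no written proof for this corollary, and your argument is the intended one, with the Tonelli justification for applying \eqref{eq:Green self adjoint} to an $L^1$ function being a welcome extra bit of care.
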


We leave the easy details to the reader.

\subsection{Equi-integrability independently of $V$}

\begin{theorem}
	\label{thm:Dunford-Pettis for G V with V bounded}
	We have
	\begin{equation}
	\int_A |\Green_V (f)| \le C |A |^\beta \| f \|_{L^1 (\Omega)}, \qquad \forall f \in L^1 (\Omega).
	\end{equation}
	for any $0<\beta < {2s}/{n}$. In particular,
	for every bounded sequence $f_n \in L^1 (\Omega)$ the sequence $\Green (f_n)$ is equiintegrable. In particular, there exists a weakly convergent subsequence $\Green (f_{n_k}) \rightharpoonup u$ in $L^1 (\Omega)$.
\end{theorem}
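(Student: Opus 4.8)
The plan is to deduce everything from the pointwise domination $|\Green_V(f)| \le \Green(|f|)$ established in \Cref{eq:existence fixed point}, combined with the Dunford--Pettis estimate for $\Green$ already proved in \Cref{thm:Dunford-Pettis for G}. The essential observation is that the constant $C$ appearing in the latter depends only on $\beta$ (equivalently on the exponent $p$ with $\beta = 1/p$, $p > Q(1)'$) and on the fixed data $\Omega, n, s$, but \emph{not} on the potential $V$. This uniformity is exactly what will make the final estimate hold for all $V \in L^\infty_+(\Omega)$ with a single constant.

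First I would fix $f \in L^1(\Omega)$ and a measurable set $A \subset \Omega$. Writing $u = \Green_V(f)$ for the unique solution of \eqref{eq:fixed point formulation} furnished by \Cref{eq:existence fixed point}, that theorem provides the pointwise bound $|u| \le \Green(|f|)$ a.e. in $\Omega$. Integrating over $A$ and invoking \Cref{thm:Dunford-Pettis for G} applied to $|f| \in L^1(\Omega)$ yields
\begin{equation}
\int_A |\Green_V(f)| = \int_A |u| \le \int_A \Green(|f|) \le C |A|^\beta \big\| |f| \big\|_{L^1(\Omega)} = C |A|^\beta \| f \|_{L^1(\Omega)},
\end{equation}
with the same constant $C$ and the same admissible range $0 < \beta < 2s/n$. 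This is the asserted inequality.

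For the equiintegrability statement I would then argue exactly as in the proof of \Cref{thm:Dunford-Pettis for G}. If $f_n$ is bounded in $L^1(\Omega)$, say $\| f_n \|_{L^1} \le M$, the estimate gives $\int_A |\Green_V(f_n)| \le C M |A|^\beta$ for every measurable $A$; since $|A|^\beta \to 0$ as $|A| \to 0$ uniformly in $n$, the sequence $\Green_V(f_n)$ is uniformly integrable. By the Dunford--Pettis theorem a bounded, uniformly integrable sequence in $L^1(\Omega)$ is relatively weakly sequentially compact, so a subsequence converges weakly in $L^1(\Omega)$.

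I do not expect any genuine obstacle here: the content of the theorem is entirely carried by the domination $|\Green_V(f)| \le \Green(|f|)$, and the only point deserving emphasis is the $V$-independence of $C$, inherited verbatim from \Cref{thm:Dunford-Pettis for G}. The one subtlety worth stating carefully is that the bound on $u$ from \Cref{eq:existence fixed point} is valid for every $V \in L^\infty_+(\Omega)$ while its right-hand side $\Green(|f|)$ does not depend on $V$ at all; this is what upgrades a family of pointwise estimates into a single uniform equiintegrability bound.
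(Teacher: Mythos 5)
Your proof is correct and is exactly the argument the paper intends: the theorem is stated without proof precisely because it follows immediately from the domination $|\Green_V(f)| \le \Green(|f|)$ of Theorem \ref{eq:existence fixed point} together with the $V$-independent estimate of Theorem \ref{thm:Dunford-Pettis for G}. Your emphasis on the $V$-independence of the constant is the right point to highlight, and no further comment is needed.
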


\subsection{Estimate of $Vu$ in $L^1 (\Omega)$}

In order to have an extension to an $L^1$ theory we introduce the following estimate
\begin{theorem}
	Let $V \in L^\infty_+ (\Omega)$ and $u = \Green_V (f)$. Then, for every $K \Subset \Omega$
	\begin{equation}
	\label{eq:estimate norm Vu}
	\int_ \Omega V |u| \le \| \Green(1) \|_{L^\infty(\Omega)}  \left(     \frac{1}{\inf_K \Green(1)} + \| V \|_{L^\infty(K)}     \right)   \int_\Omega |f|.
	\end{equation}
\end{theorem}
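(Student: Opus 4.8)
The plan is to reduce to nonnegative data, extract a global \emph{weighted} bound by testing the equation against the torsion-type function $\Green(1)=\Green(\one_\Omega)$, and then remove the weight separately on $K$ and on its complement. First I would reduce to $f\ge 0$: by \Cref{eq:existence fixed point} the solution obeys $|u|\le\Green(|f|)$, and writing $f=f_+-f_-$, $u=\Green_V(f_+)-\Green_V(f_-)$, it suffices to bound each nonnegative piece. So assume $f\ge 0$, whence $u=\Green_V(f)\ge 0$ and $Vu\ge 0$, and $\int_\Omega V|u|=\int_\Omega Vu$.

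The engine is a torsion identity. Using $u=\Green(f-Vu)$ with $Vu,f\in L^1(\Omega)$, self-adjointness \eqref{eq:Green self adjoint}, and the fact that $\Green(\one_\Omega)=\Green(1)\in\cC(\overline\Omega)$ by \eqref{eq:regularization} is a legitimate bounded test function, I pair against $\one_\Omega$ to get
\begin{equation}
\int_\Omega u + \int_\Omega Vu\,\Green(1) = \int_\Omega f\,\Green(1).
\end{equation}
Since every integrand is nonnegative, this yields the two a priori bounds
\begin{equation}
\int_\Omega Vu\,\Green(1)\le \|\Green(1)\|_{L^\infty(\Omega)}\int_\Omega f, \qquad \int_\Omega u\le \|\Green(1)\|_{L^\infty(\Omega)}\int_\Omega f.
\end{equation}
The second bound controls the part of the mass of $Vu$ carried by $K$: since $V\le\|V\|_{L^\infty(K)}$ there,
\begin{equation}
\int_K Vu \le \|V\|_{L^\infty(K)}\int_\Omega u \le \|V\|_{L^\infty(K)}\,\|\Green(1)\|_{L^\infty(\Omega)}\int_\Omega f,
\end{equation}
which is exactly the second term in \eqref{eq:estimate norm Vu}.

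It remains to handle $\int_{\Omega\setminus K}Vu$, and this is where I expect the real difficulty. The natural move is to remove the torsion weight from the first bound: on $K$ one has $\Green(1)\ge\inf_K\Green(1)>0$ (positivity from \eqref{eq:estimate for G}, continuity from \eqref{eq:regularization}), giving $\int_K Vu\le (\inf_K\Green(1))^{-1}\|\Green(1)\|_{L^\infty(\Omega)}\int_\Omega f$, the first term in \eqref{eq:estimate norm Vu}. The delicate point is that on $\Omega\setminus K$ the potential $V$ is \emph{not} controlled by $\|V\|_{L^\infty(K)}$, and the weight $\Green(1)$ degenerates as $x\to\partial\Omega$, so one cannot simply divide by $\inf_{\Omega\setminus K}\Green(1)=0$. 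The way I would close this gap is to exploit that $u$ is correspondingly small in the same region: from $|u|\le\Green(|f|)$ and the kernel estimate \eqref{eq:estimate for G}, the solution inherits the boundary decay of $\Green(1)$, so that $Vu$ cannot accumulate unbounded mass near $\partial\Omega$ beyond what the weighted bound already allows. Thus the crux is a boundary-layer estimate showing that the near-$\partial\Omega$ contribution to $\int_{\Omega\setminus K}Vu$ is absorbed by $(\inf_K\Green(1))^{-1}\|\Green(1)\|_{L^\infty(\Omega)}\int_\Omega f$; summing this with the $K$-contribution above gives \eqref{eq:estimate norm Vu}, and the general sign case follows from the reduction in the first step.
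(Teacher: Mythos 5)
Your setup coincides with the paper's: reduce to $f\ge 0$, pair the equation with $\psi=\one_\Omega$ via self-adjointness \eqref{eq:Green self adjoint} to obtain the torsion identity $\int_\Omega u+\int_\Omega Vu\,\Green(1)=\int_\Omega f\,\Green(1)$, and extract the two elementary consequences $\int_\Omega Vu\,\Green(1)\le\|\Green(1)\|_{L^\infty}\|f\|_{L^1}$ and $\int_\Omega u\le\|\Green(1)\|_{L^\infty}\|f\|_{L^1}$. The gap is in how you assign these bounds to the two regions. You spend $\|V\|_{L^\infty(K)}$ on $\int_K Vu$ and are then left to control $\int_{\Omega\setminus K}Vu$ with no information on $V$ there beyond global boundedness; the ``boundary-layer estimate'' you invoke to close this is not supplied, and it cannot be supplied in the generality claimed. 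Indeed, for $V=M\one_{\Omega\setminus K}$ the weighted bound only controls $M\int_{\Omega\setminus K}u\,\Green(1)$, the weight $\Green(1)$ degenerates at $\partial\Omega$, and $u\le\Green(|f|)$ gives no uniform pointwise comparison with $\Green(1)$ when $f$ is merely integrable; so no bound on $\int_{\Omega\setminus K}Vu$ in terms of $\|V\|_{L^\infty(K)}$ and $\|f\|_{L^1}$ alone is available. As written, the argument does not close.

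The resolution is that the norm in the stated constant should be read as $\|V\|_{L^\infty(\Omega\setminus K)}$: this is what the paper's own proof produces, and it is what the later application requires (in Section 7 the singular set of $V$ is placed \emph{inside} $K$, so $V$ is bounded precisely on $\Omega\setminus K$). With that reading, the two bounds you already derived finish the proof once the regions are swapped: on $K$, where $\Green(1)\ge\inf_K\Green(1)>0$ by \eqref{eq:estimate for G} and \eqref{eq:regularization}, divide the weighted bound by the weight to get $\int_K Vu\le(\inf_K\Green(1))^{-1}\|\Green(1)\|_{L^\infty}\|f\|_{L^1}$; on $\Omega\setminus K$, use $\int_{\Omega\setminus K}Vu\le\|V\|_{L^\infty(\Omega\setminus K)}\int_\Omega u\le\|V\|_{L^\infty(\Omega\setminus K)}\|\Green(1)\|_{L^\infty}\|f\|_{L^1}$. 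Summing gives \eqref{eq:estimate norm Vu} with $\|V\|_{L^\infty(\Omega\setminus K)}$, and your reduction to signed $f$ is correct and completes the argument.
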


\begin{proof}
	Assume first that $f \ge 0$. Then, we use $\psi = \Green (1) $ as a test function we deduce
	\begin{equation}
	\int_ \Omega u + \int_ \Omega V u \Green(1)  = \int_ \Omega f \Green (1).
	\end{equation}
	Clearly $\Green (1) |_K \ge c > 0$. Hence
	\begin{equation}
	\int_K Vu \le \frac 1 c \int_K Vu \Green(1) \le \frac 1 c \int_\Omega f \Green(1) \le C \int_ \Omega f.
	\end{equation}
	On the other hand,
	\begin{equation}
	\int_{ \Omega \setminus K } Vu \Green(1) \le C \| V \|_{L^\infty (\Omega \setminus K)} \int_ \Omega u \le C \int_ \Omega f.
	\end{equation}
	Thus,
	\begin{equation}
	\int_ \Omega V u \le C \int_ \Omega f. \\
	\end{equation}
	If $f$ changes sign we decompose as $f = f_+ - f_-$. We apply the result above for $u_1 = \Green_V (f_+), u_2 = \Green_V (f_-)$. Thus, $u = u_1 - u_2$, and so $|u| \le u_1 + u_2$. Hence,
	\begin{equation}
	\int_\Omega V|u| \le \int_ \Omega V u_1 + \int_ \Omega Vu_2 \le C \int_ \Omega f_+ + C \int_ \Omega f_- = C \int_ \Omega |f|.
	\end{equation}
	This completes the proof.
\end{proof}

\section{Uniqueness for general $V \ge 0$ }

\label{sec:uniqueness of Schrodinger}
\begin{theorem}
	Assume $|\{ V = +\infty \}| = 0$. There exists, at most, one solution $u \in L^1 (\Omega)$ of  \eqref{eq:fixed point formulation}.
\end{theorem}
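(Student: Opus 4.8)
The plan is to show that the difference of two solutions is killed by the quadratic form behind \eqref{eq:coercivity}. Let $u_1,u_2\in L^1(\Omega)$ both solve \eqref{eq:fixed point formulation} for the same data and set $w=u_1-u_2$. Subtracting the identities $u_i=\Green(f-Vu_i)$ and using $Vu_1,Vu_2\in L^1(\Omega)$ gives $Vw\in L^1(\Omega)$ together with
\[
w=-\Green(Vw),\qquad
\int_\Omega w\,\psi+\int_\Omega Vw\,\Green(\psi)=0\quad\forall\psi\in L^\infty(\Omega),
\]
the second being the weak-dual reformulation discussed in \Cref{sec:definitions}. Since $w=-\Green(Vw)$, it suffices to prove that $Vw=0$ a.e.: once this holds, $w=-\Green(0)=0$.

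The obstruction, compared with the bounded case of \Cref{thm:uniqueness fixed point}, is that $Vw$ lies only in $L^1(\Omega)$, so it cannot be inserted directly into \eqref{eq:coercivity}, and the $L^1$-to-$L^2$ bootstrap used there breaks down because $V$ is unbounded. I would instead test the weak-dual identity against the admissible bounded functions $\psi_k=T_k(Vw)$, with $T_k(r)=\max(-k,\min(k,r))$, obtaining
\[
\int_\Omega w\,T_k(Vw)+\int_\Omega Vw\,\Green\big(T_k(Vw)\big)=0 .
\]
On $\{V>0\}$ one has $\sign T_k(Vw)=\sign w$, so $w\,T_k(Vw)=|w|\min(V|w|,k)\ge 0$ increases to $V w^2$; by monotone convergence the first term tends to $I:=\int_\Omega V w^2\in[0,+\infty]$.

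For the second term I would split off the coercive part,
\[
\int_\Omega Vw\,\Green(T_k(Vw))=\int_\Omega T_k(Vw)\,\Green(T_k(Vw))+E_k,\qquad
E_k:=\int_\Omega \big(Vw-T_k(Vw)\big)\,\Green(T_k(Vw)),
\]
where $T_k(Vw)\in L^\infty(\Omega)\subset L^2(\Omega)$, so the first piece is $\ge 0$ by \eqref{eq:coercivity}. Rearranging gives $\int_\Omega T_k(Vw)\,\Green(T_k(Vw))=-\int_\Omega w\,T_k(Vw)-E_k$. To handle $E_k$ I would use the Kato-type bound $|\Green(T_k(Vw))|\le\Green(|T_k(Vw)|)\le\Green(V|w|)$, which follows from monotonicity \eqref{eq:monotonicity of Green} and gives a fixed $L^q(\Omega)$ envelope by \Cref{thm:regularization from L1}; combined with $|Vw-T_k(Vw)|=(V|w|-k)^+\to 0$ a.e.\ (here $|\{V=+\infty\}|=0$ and $Vw\in L^1(\Omega)$ ensure $V|w|<\infty$ a.e.) this forces $E_k\to 0$. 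Then the right-hand side tends to $-I$, the left-hand side is $\ge 0$, hence $-I\ge 0$, so $I=0$. Thus $V w^2=0$ a.e., whence $Vw=0$ a.e.\ and $w=0$.

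The main obstacle is exactly the control of the remainder $E_k$: making $\int_\Omega (V|w|-k)^+\,\Green(V|w|)\to 0$ fully rigorous needs integrability of the envelope $V|w|\,\Green(V|w|)$, i.e.\ finiteness of the energy of $Vw$. I expect to secure this either by extracting an a priori bound $\int_\Omega T_k(Vw)\,\Green(T_k(Vw))\le C$ from the identity above (its right-hand side is dominated by the monotone first term) and then invoking dominated convergence, or by exploiting the Markovian/Dirichlet-form structure underlying \eqref{eq:coercivity} that all the model operators of \Cref{sec:examples laplacians} share. This is precisely where the argument departs from the bounded-potential bootstrap.
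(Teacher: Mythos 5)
Your reduction to $w=-\Green(Vw)$ with $Vw\in L^1(\Omega)$ and the truncation--energy strategy are sensible, but the argument has a genuine gap at exactly the step you flag: the convergence $E_k=\int_\Omega \bigl(Vw-T_k(Vw)\bigr)\,\Green(T_k(Vw))\to 0$. Your dominating envelope is $(V|w|-k)^+\,\Green(V|w|)$, and dominated convergence requires $V|w|\,\Green(V|w|)\in L^1(\Omega)$ --- precisely the finiteness of the energy of $Vw$ that the whole scheme is trying to establish. From the hypotheses you only know $V|w|\in L^1(\Omega)$, hence $\Green(V|w|)\in L^q(\Omega)$ for $q<n/(n-2s)$ by \Cref{thm:regularization from L1}; the product of an $L^1$ function with an $L^q$ function ($q<\infty$) need not be integrable, so the envelope may fail to be summable and the limit of $E_k$ is not controlled. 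Neither proposed repair closes this. The identity gives $\int_\Omega T_k(Vw)\Green(T_k(Vw))=-\int_\Omega w\,T_k(Vw)-E_k$ with both $\int_\Omega T_k(Vw)\Green(T_k(Vw))\ge 0$ and $\int_\Omega w\,T_k(Vw)\ge 0$, so any a priori bound on the energy extracted from it presupposes a bound on $-E_k$: the argument is circular. Appealing to a Markovian or Dirichlet-form structure uses strictly more than the standing assumptions \eqref{eq:G is symmetric}--\eqref{eq:regularization}, under which \eqref{eq:coercivity} is only a positivity statement for $L^2$ data.

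The paper's proof sidesteps energy entirely and is worth comparing. With $V_k=V\wedge k$, the difference $u=u_1-u_2$ satisfies $u=\Green(f_k-V_k u)$ with $f_k=(V_k-V)u\in L^1(\Omega)$, so by \Cref{thm:uniqueness fixed point} it is the unique solution of the \emph{bounded}-potential problem with datum $f_k$, whence $\|u\|_{L^1}\le C\|f_k\|_{L^1}$ with $C$ independent of $k$ (since $|\Green_{V_k}(g)|\le \Green(|g|)$). Because $|f_k|\le V|u|\in L^1(\Omega)$ and $f_k\to 0$ a.e.\ (this is where $|\{V=+\infty\}|=0$ enters), dominated convergence gives $f_k\to 0$ in $L^1(\Omega)$ and therefore $u=0$. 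If you wish to keep your truncation scheme you must first supply an independent proof that $\int_\Omega T_k(Vw)\Green(T_k(Vw))$ stays bounded (or otherwise that $E_k\to0$); as written the proof is incomplete.
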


\begin{proof} Let $u_1, u_2 \in L^1 (\Omega)$ be two solution. Then $u = u_1 - u_2 \in L^1 (\Omega)$ is a solution of $u = - \Green (Vu)$.
	
	For $k \in \mathbb N$ we define $V_k = V \wedge k \in L^\infty_+ (\Omega)$.
	We write
	\begin{equation}
		u = \Green (  (V_k - V) u  - V_k u ) = \Green (f_k - V_k u)
	\end{equation}
	where $f_k = (V_k - V) u \in L^1 (\Omega)$. Hence, due to \Cref{thm:uniqueness fixed point}, $u$ is the unique solution of $u + \Green (V_k u) = \Green (f_k)$ and we deduce that
	\begin{equation}
		\| u \|_{L^1 (\Omega)} \le C \| f _ k \|_{L^1 (\Omega)}.
	\end{equation}
	
	On the other hand, we have that $|(V-V_k) u | \le |V - V_k|  |u| \le V|u| \in L^1 (\Omega)$. Since the $V_k \to V$ a.e. we deduce that $(V-V_k)u \to 0$ a.e. in $\Omega$. Thus, due the Dominated Convergence Theorem we have $(V-V_k) u \to 0$ in $L^1 (\Omega)$ and so $u = 0$.
\end{proof}


\section{Existence for $(f,V) \in L^1 (\Omega) \times L^1_+ (\Omega)$}
\label{sec.exist.L1L1}

\begin{theorem}
	\label{thm:existence when f and V in L1}
	If $(f,V) \in L^1 (\Omega) \times L^1_+ (\Omega)$, there exists a solution.
\end{theorem}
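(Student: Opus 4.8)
The plan is to build the solution by a double approximation, cutting off the potential by $V_k = V \wedge k \in L^\infty_+(\Omega)$ and the datum by $f_m = f \wedge m$ (after first reducing to $f \ge 0$ by splitting $f = f_+ - f_-$ and using linearity together with the bound $|u| \le \Green(|f|)$ from \Cref{eq:existence fixed point}). For each pair $(k,m)$ the bounded theory of \Cref{sec:existence for V bounded} provides a unique dual solution $u_{k,m} = \Green_{V_k}(f_m)$, and a comparison argument in the potential (writing $u_{k,m} - u_{k',m} = \Green_{V_{k'}}((V_{k'}-V_k)u_{k,m}) \ge 0$ for $k' \ge k$) shows that $u_{k,m}$ is nonnegative, nondecreasing in $m$, nonincreasing in $k$, and dominated by $\Green(f_m) \le \Green(f) \in L^1(\Omega)$.

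The first (inner) limit is $k \to \infty$ with $m$ fixed, and here the boundedness of the datum is decisive. Since $f_m \in L^\infty(\Omega)$, regularization \eqref{eq:regularization} gives $\Green(f_m) \in \cC(\overline\Omega)$, so $0 \le u_{k,m} \le \|\Green(f_m)\|_{L^\infty} =: M_m$ uniformly in $k$. Consequently $V_k u_{k,m} \le V M_m$, and because $V \in L^1(\Omega)$ this furnishes a $k$-independent integrable majorant. As $u_{k,m} \searrow u_m$ a.e. and $V_k \to V$ a.e. (recall $|\{V = +\infty\}| = 0$), dominated convergence yields $V_k u_{k,m} \to V u_m$ in $L^1(\Omega)$; passing to the limit in $u_{k,m} = \Green(f_m - V_k u_{k,m})$ using continuity of $\Green$ on $L^1$ produces a genuine dual solution $u_m = \Green(f_m - V u_m)$ of the problem with data $(f_m, V)$, with $V u_m \in L^1(\Omega)$.

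The second (outer) limit $m \to \infty$ is where the real difficulty lies. The functions $u_m$ increase to some $u \le \Green(f)$, so $u_m \to u$ in $L^1(\Omega)$, and $V u_m \nearrow V u$ a.e.; by monotone convergence it then suffices to establish the single quantitative fact $\int_\Omega V u < +\infty$, i.e. to rule out concentration of the potential term. Testing the equation for $u_m$ against $\psi = \one_\Omega$ and using self-adjointness gives the uniform weighted bound $\int_\Omega V u_m\, \Green(\one_\Omega) = \int_\Omega f_m\, \Green(\one_\Omega) - \int_\Omega u_m \le \int_\Omega f\, \Green(\one_\Omega)$, which, since $\Green(\one_\Omega) \ge c_K > 0$ on each compact $K \Subset \Omega$, controls $\int_K V u_m$ uniformly. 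Combined with the potential-independent equi-integrability of $u_m$ from \Cref{thm:Dunford-Pettis for G V with V bounded}, this is the ingredient that prevents the mass of $V u_m$ from escaping and secures $V u \in L^1(\Omega)$; once this is in hand, monotone convergence gives $V u_m \to V u$ in $L^1$ and, by continuity of $\Green$, the limit satisfies $u = \Green(f - V u)$, i.e. $u$ is the desired dual solution.

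I expect the main obstacle to be precisely this last integrability/no-concentration step, and in particular its behaviour near $\partial\Omega$, where the weight $\Green(\one_\Omega)$ degenerates: the interior is immediately controlled by $\Green(\one_\Omega) \ge c_K$, but the boundary layer must be absorbed using the \emph{global} integrability of $V$ together with the weighted a priori estimate. This is exactly the mechanism by which the double-limit argument of \cite{diaz+g-c+vazquez2018} succeeds for $L^1$ data, whereas it can fail for a general measure $\mu$, giving rise to the concentration phenomenon analysed later in the paper.
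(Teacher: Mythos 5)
Your proposal follows essentially the same route as the paper: the same double truncation $V_k = V \wedge k$, $f_m = f \wedge m$ after reducing to $f \ge 0$, the same monotonicity in $k$ and $m$, the same dominated-convergence argument for the inner limit based on the majorant $V_k u_{k,m} \le V\,\|\Green(f_m)\|_{L^\infty} \in L^1(\Omega)$, and the same monotone passage as $m \to \infty$. The one step you single out as the main obstacle --- upgrading the weighted bound $\int_\Omega V u_m\, \Green(\one_\Omega) \le \int_\Omega f\, \Green(\one_\Omega)$ to unweighted integrability of $Vu$ up to the boundary --- is precisely the step the paper does not attempt: it passes to the limit of $Vu_m$ only in the weighted space $L^1(\Omega, \delta^\gamma)$ (the natural space, since $\Green(\one_\Omega) \asymp \delta^\gamma$), which is exactly the information your test function $\psi = \one_\Omega$ produces; so your diagnosis of where the difficulty sits is accurate, and the paper sidesteps rather than resolves it.
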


The proof of this replicates the double limit argument in our previous paper \cite{diaz+g-c+vazquez2018} for more general operators.

\begin{lemma}[Monotonicity]
	If $V_1 \le V_2$ and $f_1 \ge f_2$ then $\Green_{V_1} (f_1) \ge \Green_{V_2} (f_2)$.
\end{lemma}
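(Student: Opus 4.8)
The statement to prove is the Monotonicity Lemma: if $V_1 \le V_2$ and $f_1 \ge f_2$ then $\Green_{V_1}(f_1) \ge \Green_{V_2}(f_2)$, where $V_1, V_2 \in L^\infty_+(\Omega)$ so that the operators $\Green_{V_i}$ are well-defined by the fixed-point theory of \Cref{sec:existence for V bounded}. The plan is to exploit the monotonicity of the bare Green operator $\Green$ (from \eqref{eq:monotonicity of Green}) together with the iteration scheme used to construct the fixed point in \Cref{eq:existence fixed point}. The natural strategy is to show the inequality holds at each step of a suitable monotone iteration and then pass to the limit.

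First I would reduce to a convenient form. Write $u_i = \Green_{V_i}(f_i)$, so $u_i = \Green(f_i - V_i u_i)$. The cleanest route is probably a direct comparison argument on the difference. Set $w = u_1 - u_2$; then $w = \Green\big((f_1 - f_2) - (V_1 u_1 - V_2 u_2)\big)$. The obstacle is that the potential terms are coupled to the \emph{different} unknowns $u_1, u_2$ and to different potentials, so this difference does not immediately have a sign. To untangle this I would rewrite $V_1 u_1 - V_2 u_2 = V_1 w + (V_1 - V_2) u_2$, giving $w = \Green\big((f_1 - f_2) + (V_2 - V_1)u_2 - V_1 w\big)$, i.e. $w + \Green(V_1 w) = \Green(g)$ where $g = (f_1 - f_2) + (V_2 - V_1)u_2$. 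If I can establish $g \ge 0$ and that the solution operator $(I + \Green V_1)^{-1} = \Green_{V_1}$ applied to a nonnegative right-hand side is nonnegative, then $w = \Green_{V_1}(g) \ge 0$ and I am done.

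The sign of $g$ requires knowing $u_2 = \Green_{V_2}(f_2) \ge 0$, which needs $f_2 \ge 0$; but we are only given $f_1 \ge f_2$, not a sign on $f_2$ itself. This is the main subtlety. The better-adapted approach is therefore to build monotonicity into the iteration directly rather than working with differences. I would run the construction of \Cref{eq:existence fixed point} for both $(V_1, f_1)$ and $(V_2, f_2)$ in parallel and prove by induction that the corresponding iterates satisfy the desired ordering at every stage, using at each step that $\Green$ preserves order (monotonicity \eqref{eq:monotonicity of Green}) and that $f_1 - V_1 u \ge f_2 - V_2 u$ whenever the iterate $u$ is nonnegative (since $f_1 \ge f_2$ and $V_1 u \le V_2 u$ for $u \ge 0$). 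The nonnegativity of the iterates is already part of the construction in \Cref{eq:existence fixed point}, so this feeds in cleanly. The main obstacle I expect is bookkeeping the alternating (even/odd) structure of that iteration: the odd iterates decrease and the even ones increase, so the comparison must be set up so that upper iterates of one problem dominate upper iterates of the other and likewise for lower iterates, and one must check the inequalities are consistent across the alternation before passing to the monotone limits $\underline u_i, \overline u_i$ in $L^1(\Omega)$.

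Once the ordering survives in the limit for the constructed solutions, uniqueness (\Cref{thm:uniqueness fixed point}) identifies these limits with $\Green_{V_1}(f_1)$ and $\Green_{V_2}(f_2)$, completing the proof. As a sanity check, and to handle sign-changing data gracefully, I would verify the two-potential comparison specializes correctly to the known facts: taking $V_1 = V_2$ recovers monotonicity in $f$, and taking $f_1 = f_2 \ge 0$ recovers the expected antitone dependence on the potential, namely that a larger potential produces a smaller (still nonnegative) solution.
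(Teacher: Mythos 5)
Your opening ``direct comparison'' argument --- which you then abandon --- is in fact exactly the paper's proof. Under the implicit hypothesis $f_2 \ge 0$ (note both solutions in the paper's Step~1 are taken nonnegative), one has $u_2 = \Green_{V_2}(f_2) \ge 0$, hence $F = (f_1 - f_2) + (V_2 - V_1)u_2 \ge 0$, and $w = u_1 - u_2$ is the unique solution of $w = \Green(F - V_1 w)$, i.e.\ $w = \Green_{V_1}(F) \ge 0$ by \Cref{eq:existence fixed point} and \Cref{thm:uniqueness fixed point}. This is also the only form in which the lemma is ever invoked (monotonicity of $\Green_{V_k}(f_m)$ in $k$ for fixed $f_m \ge 0$), so you should have kept this route rather than switching.

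The parallel-iteration replacement you propose instead has a genuine gap, and it is not mere even/odd bookkeeping: the iteration map $u \mapsto \Green(f - Vu)$ is order-\emph{reversing} in $u$, so the ordering between the two problems does not propagate. To get $u^{(1)}_i \ge u^{(2)}_i$ from $u^{(1)}_{i-1} \ge u^{(2)}_{i-1}$ you would need $V_2 u^{(2)}_{i-1} - V_1 u^{(1)}_{i-1} \ge f_2 - f_1$ pointwise, which already fails at $i=2$ in the case $V_1 = V_2 = V$, where it reads $V\bigl(\Green(f_2) - \Green(f_1)\bigr) \ge f_2 - f_1$ with both sides nonpositive and no pointwise comparison available; the induction does not close. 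Separately, your worry about sign-changing $f_2$ is well founded but cannot be repaired: for $f_1 = f_2 = -g$ with $0 \le g \in L^\infty(\Omega)$, $g \not\equiv 0$, and $V_1 = 0 \le V_2$, linearity of $\Green_{V_i}$ together with the nonnegative case gives $\Green_{V_1}(f_1) = -\Green(g) \le -\Green_{V_2}(g) = \Green_{V_2}(f_2)$, so the asserted inequality reverses. (The paper's own Step~2, decomposing $f_i = (f_i)_+ - (f_i)_-$, suffers from the same defect: the claimed $\Green_{V_1}\bigl((f_1)_-\bigr) \le \Green_{V_2}\bigl((f_2)_-\bigr)$ does not follow from Step~1 and is false in general.) The lemma should be read with the additional hypothesis $f_2 \ge 0$, under which your first argument is already a complete proof.
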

\begin{proof}
	\textbf{Step 1. Assume $f_2 \ge 0$.}
	Let $u_i \ge 0$ be the unique solutions of $u_i = \Green (f_i - V_i u_i)$. Let $w = u_1 - u_2$. It satisfies
	\begin{equation}
		w + \Green (V_1 w) = \Green (f_1 - f_2 + (V_2 - V_1) u_2 ).
	\end{equation}
	Letting $F= f_1 - f_2 + (V_2 - V_1) u_2 \ge 0$ we have that $w$ is the unique solution of $w = \Green(F - V_1 w )$, and therefore $w \ge 0$ and, hence $u_1 \ge u_2$.
	
	\textbf{Step 2. $f_2$ has no sign.} The we decompose in positive and negative part $f_i = (f_i)_+ - (f_i)_-$. It is clear that
	\begin{equation}
		(f_1)_+ \ge (f_2)_+ \ge 0 , \qquad 0 \le (f_1)_- \le (f_2)_-.
	\end{equation}
	Applying the previous step we have
	\begin{equation}
	\Green((f_1)_+) \ge \Green((f_2)_+) , \qquad \Green((f_1)_- ) \le \Green ((f_2)_-).
	\end{equation}
	Therefore
	\begin{equation}
		\Green(f_1) = \Green ((f_1)_+ - (f_1)_-) \ge \Green ((f_2)_+ - (f_2)_-) = \Green(f_2).
	\end{equation}
	This completes the proof.
\end{proof}

\begin{proof}[Proof of \Cref{thm:existence when f and V in L1}]
	\textbf{Step 1.} $f \ge 0$. 	
	We define
	\begin{equation}
	V_k = V \wedge k , \qquad f_m = f \wedge m.
	\end{equation}
	We define $u_{k,m} = \Green_{V_k} (f_m) \in L^\infty (\Omega)$. Let $U_m = \Green (f_m) \in L^\infty (\Omega)$.
	
	\textbf{Step 1a. $k \to +\infty$.} Clearly $u_{k,m}$ is a non-increasing sequence on $k$ such that  $0 \le u_{k,m} \le U_m$, hence $u_{k,m} \to u_m$ in $L^1 (\Omega)$, due the Monotone Convergence Theorem. On the other hand
	\begin{equation}
	V_{k} u_{k,m} \le V U_m \in L^1 (\Omega)
	\end{equation}
	and we have
	\begin{equation}
	V_k u_{k,m} \to V u_m \qquad \textrm { a.e. } \Omega.
	\end{equation}
	Therefore, due the Dominated Convergence Theorem and, due to the estimate
	\begin{equation}
		\int_ \Omega V_k u_{k,m} \delta^\gamma \le C \int_ \Omega f_m \delta^\gamma
	\end{equation}
	we have that
	\begin{equation}
	V_k u_{k,m} \to V u_m \qquad L^1 (\Omega , {\delta^\gamma}).
	\end{equation}
	
	Hence
	\begin{equation}
	u_m = \lim_{k} u_{k,m} = \lim_{k} \Green (f_m - V_k u_{k,m}) = \Green (f_m - V u_m)
	\end{equation}
	and $u_m$ is the solution corresponding to $(f_m, V)$.
	
	\textbf{Step 1b. $m \to + \infty$.} The sequence $u_m$ is increasing. Since $\int_{ \Omega } u_m \le C$ due to the Monotone Convergence Theorem we have $u_m \to u$ in $L^1 (\Omega)$. Analogously  $V u_m \to Vu$ in $L^1 (\Omega,{\delta^\gamma})$. Furthermore ${u_m} = \Green (f_m - V_m u_m) \to \Green (f - Vu)$.
	
{\bf Step 2. \rm $f$ has no sign.} We decompose $f = f_+ - f_-$ and we apply Step 1.
\end{proof}

\section{Singular potential and measure data: CSOLAs}
\label{sec:existence for V singular at 0}

Once the theory of data $f$ and integrable potentials is complete, we address the novel question of measure data and possibly non-integrable potentials and the consequence of their interactions for the theory of existence.

\subsection{CSOLA: Limit of approximating sequences. Reduced measures}
 We regularize the potential by  putting
  \begin{equation}
 	V_\ee (x) = V(x) \wedge \frac 1 \ee.
 \end{equation}
 Since $V_\ee (x) \in L^\infty (\Omega)$, a Green kernel in the standard sense exists.

  For the remainder of this section we fix a measure $\mu \in \cM$. We want to understand what happens  to
 \begin{equation}
 	u_\ee = \Green_ {V_\ee} (\mu),
 \end{equation}
 i.e., to the solution of $Lu + V_\ee u=\mu$, as $\ee \to 0$.  We say that
 \begin{equation}
 	\ulim = \lim_{ \ee \to 0 } u_\ee
 \end{equation}
 is a Candidate Solution Obtained as Limit of Approximations (CSOLA). We will prove that such a convergence holds, at least, in $L^1 (\Omega)$. The main problem is to decide when the CSOLA is an actual dual solution.

 \label{sec:existence of SOLA}
 We prove the following
 \begin{theorem}
 	\label{thm:existence of CSOLA}
 	Assume that $V\ge0 $ satisfies condition \eqref{eq:V singular 0} from the introduction and let $\mu \ge 0$ be a nonnegative Radon measure. Then, there exist an integrable function $\ulim \ge 0$ and constants $(\alpha_\mu^x)_{x \in S} \in \mathbb R$ such that:
 	\begin{enumerate}[\rm i)]
 		\item \label{it:convergence uee}
 		$u_\ee \searrow \ulim$ in $L^1 (\Omega)$
 		
 		\item \label{it:convergence Vee uee away from 0}
 		$V_\ee u_\ee \to V \ulim$ in $L^1 (\Omega \setminus B_\rho({S}), {\delta^\gamma})$ for any $\rho > 0$
 		
 		\item \label{it:convergence Vee uee as measure}
 		$V_\ee u_\ee \rightharpoonup V \ulim + \sum_{x \in S} \alpha_\mu^x \Dirac_x$ weakly in $\cM (\Omega, {\delta^\gamma})$.
 		
 		\item  \label{it:problem for ulim} The limit satisfies the equation
 		\begin{equation}
 			\ulim + \Green (V \ulim) = \Green (\mu_r),
 		\end{equation}
 		where $\mu_r$ is the reduced measure
 		\begin{equation}
 			\mu_r = \mu - \sum_{x \in S} \alpha_ \mu ^x \Dirac_x.
 		\end{equation}
 	\end{enumerate}
 \end{theorem}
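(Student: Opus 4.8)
The plan is to exploit the monotonicity of the regularized problems in $\ee$ together with a uniform estimate on the potential term. First I would note that, since \Cref{thm:Dunford-Pettis for G V with V bounded} provides an equi-integrability bound for $\Green_{V_\ee}$ that is uniform in the bounded potential, the operator $\Green_{V_\ee}$ extends to $\cM(\Omega)$ exactly as $\Green$ did in \Cref{thm:extension of G to measures}, so $u_\ee = \Green_{V_\ee}(\mu)$ is well defined for the measure $\mu$. As $\ee \to 0$ the cut-off $V_\ee = V \wedge \ee^{-1}$ is nondecreasing, and $\mu \ge 0$; applying the monotonicity lemma to $L^\infty$ approximations $f_n\,\dx \rightharpoonup \mu$ and passing to the limit gives $0 \le u_\ee \le \Green(\mu)$ with $u_\ee$ nonincreasing in $1/\ee$. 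Dominated (or monotone) convergence then produces $\ulim \ge 0$ with $u_\ee \searrow \ulim$ in $L^1(\Omega)$, which is claim~\ref{it:convergence uee}.

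The second ingredient is a uniform bound on $V_\ee u_\ee$. Testing the dual formulation of $u_\ee + \Green(V_\ee u_\ee) = \Green(\mu)$ against $\psi \equiv 1$ yields
\begin{equation*}
\int_\Omega u_\ee + \int_\Omega V_\ee u_\ee \, \Green(1) = \int_\Omega \Green(1)\,\mathrm d\mu \le \|\Green(1)\|_{L^\infty}\,\|\mu\|_{\cM(\Omega)}.
\end{equation*}
By \eqref{eq:estimate for G} and \eqref{eq:regularization} one has $\Green(1) \asymp \delta^\gamma$ and $\Green(1)\in\cC(\overline\Omega)$, so this bounds $V_\ee u_\ee$ in $L^1(\Omega,\delta^\gamma)$ uniformly in $\ee$; since $\delta^\gamma$ is bounded below on a neighbourhood of the interior set $S$ and $V$ is bounded near $\partial\Omega$ by \eqref{eq:V singular 0}, the bound is in fact an unweighted $L^1(\Omega)$ bound. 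On each $\Omega\setminus B_\rho(S)$ the potential is bounded, so $V_\ee = V$ there for $\ee$ small; as $u_\ee \to \ulim$ in $L^1$ and $\delta^\gamma$ is bounded, $V_\ee u_\ee = V u_\ee \to V\ulim$ in $L^1(\Omega\setminus B_\rho(S),\delta^\gamma)$, which is claim~\ref{it:convergence Vee uee away from 0}. Fatou then gives $V\ulim\in L^1(\Omega)$.

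For claim~\ref{it:convergence Vee uee as measure} I would use the uniform bound to extract a weak-$\star$ subsequential limit $\nu$; by claim~\ref{it:convergence Vee uee away from 0} its restriction to $\Omega\setminus S$ is $V\ulim\,\dx$, and since $S$ is finite the remaining part is carried by $S$, so $\nu = V\ulim + \sum_{x\in S}\alpha_\mu^x\Dirac_x$ with $\alpha_\mu^x\ge0$. To promote this to convergence of the whole family and to fix the coefficients, I would observe that monotonicity already forces $\Green(V_\ee u_\ee) = \Green(\mu) - u_\ee \to \Green(\mu)-\ulim$ in $L^1(\Omega)$ along the full family; hence, by \Cref{prop:G continuous weak star measure to weak L1}, every subsequential limit satisfies $\Green(\nu)=\Green(\mu)-\ulim$. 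Two such limits differ only in their atoms on $S$, and $\sum_{x\in S}(\alpha^x-\beta^x)\Green(\Dirac_x)=0$ forces equality because the functions $\G(\cdot,x)=\Green(\Dirac_x)$, $x\in S$, are linearly independent — each blows up like $|\cdot-x|^{2s-n}$ at its own pole and stays bounded at the others. Thus $\nu$ is unique and the entire family converges.

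Finally, claim~\ref{it:problem for ulim} follows by passing to the limit in $u_\ee + \Green(V_\ee u_\ee) = \Green(\mu)$: with $u_\ee\to\ulim$ in $L^1$ and $\Green(V_\ee u_\ee)\to\Green(\nu)$ one gets $\ulim + \Green(V\ulim) + \sum_{x\in S}\alpha_\mu^x\Green(\Dirac_x) = \Green(\mu)$, which rearranges to $\ulim+\Green(V\ulim)=\Green(\mu_r)$ with $\mu_r=\mu-\sum_{x\in S}\alpha_\mu^x\Dirac_x$. I expect the main obstacle to be claim~\ref{it:convergence Vee uee as measure}: securing the uniform mass bound that prevents the potential term from losing its charge, and then showing the defect of the weak limit is a genuine finite sum of atoms located exactly on $S$, with canonically determined weights, rather than a more diffuse singular measure. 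The linear independence of the $\G(\cdot,x)$ together with the $L^1$-continuity of $\Green$ under weak-$\star$ convergence is precisely what makes the $\alpha_\mu^x$ well defined and upgrades subsequential convergence to convergence of the full family.
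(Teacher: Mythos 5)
Your proposal is correct and follows essentially the same route as the paper: monotonicity of $u_\ee$ in $\ee$ plus monotone convergence for i), the uniform weighted $L^1$ bound on $V_\ee u_\ee$ obtained by testing with $\Green(1)$ together with the local boundedness of $V$ away from $S$ for ii), identification of the regular part and the atomic singular part supported on the finite set $S$ for iii), and passage to the limit in $u_\ee+\Green(V_\ee u_\ee)=\Green(\mu)$ for iv). Your extra argument pinning down the coefficients $\alpha_\mu^x$ and upgrading subsequential to full-family convergence — via $\Green(\nu)=\Green(\mu)-\ulim$ and the linear independence of the functions $\G(\cdot,x)$, $x\in S$ — is a detail the paper's proof leaves implicit, and it is a sound way to close that step.
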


It is important to notice that, according to point (iv), $\ulim$ is the solution of \eqref{eq1} corresponding to the reduced measure $\mu_r$. We do not assert having solved  \eqref{eq1} with data $\mu$.

\begin{proof}[Proof of \Cref{thm:existence of CSOLA}]
	Let us prove \ref{it:convergence uee}). It is immediate that $u_\ee \ge 0$.
	Since the sequence $V_\ee$ is pointwise increasing, then the sequence $u_\ee$ is pointwise decreasing.  Thus, due the Monotone Convergence Theorem,  it has an $L^1(\Omega)$ limit, $\ulim\ge 0$.
	
	To prove \ref{it:convergence Vee uee away from 0}) we recall \eqref{eq:V singular 0} and thus $u_\ee \to \ulim$ in $L^1 (\Omega)$ is sufficient.
	
	To prove \ref{it:convergence Vee uee as measure}) we start by indicating that $V_\ee u_\ee \ge 0$.
	On the other hand, $\int_ \Omega V_\ee u_\ee {\delta^\gamma} \le C \int_{ \Omega }\delta^\gamma d\mu$. Indeed, taking
	let
		\begin{equation}
		K = \bigcup_{x \in S} \overline{ B_{\rho_x} (x)}
		\end{equation}
		where $0 < \rho _ x < \mathrm{dist}(S, \partial \Omega) / 2$ is small enough so that
		\begin{equation}
		\overline{ B_{\rho_x} (x)} \cap S = \{x\}.
		\end{equation}
	We have $S \subset \mathrm{int} (K)$. The estimate \eqref{eq:estimate norm Vu} is preserved.

	Thus, there exists a limit $\gamma \in \cM_+ (\Omega)$ as measures
	\begin{equation}
	V_\ee u _\ee \rightharpoonup \gamma \textrm{ in } \cM_+ (\Omega, {\delta^\gamma}).
	\end{equation}
	Due to the pointwise convergence away from $0$ the regular part of $\gamma$ is $V \underline u$. On the other hand, the singular support of $\mu$ is, at most, $S$. Thus, the singular part is a combination of $\Dirac$ measures. Hence,
	\begin{equation}
	\gamma =  \sum_{x \in S} \alpha_\mu^x \Dirac_x + V \ulim.
	\end{equation}
	Then,
	\begin{align}
		f - V_\ee u_\ee \rightharpoonup f  - \sum_{x \in S} \alpha_{ \mu }^x \Dirac_ x -  V \ulim \qquad   \textrm{ weak}-\star-\cM (\Omega,{\delta^\gamma}).
	\end{align}
	Hence,
	\begin{align}
		u_\ee = \Green \left( f - V_\ee u_\ee \right) \rightharpoonup \Green \left( f  - \sum_{x \in S} \alpha_{ \mu }^x \Dirac_ x -  V \ulim \right) \qquad   \textrm{ weakly } L^1 (\Omega).
	\end{align}
	Due to the uniqueness of limit,
	\begin{equation}
		\ulim = \Green \left( f  - \sum_{x \in S} \alpha_{ \mu }^x \Dirac_ x -  V \ulim \right)
	\end{equation}
	In other words.
	\begin{equation}
		\ulim + \Green (V \ulim ) = \Green \left( f  - \sum_{x \in S} \alpha_{ \mu }^x \Dirac_ x \right). \\
	\end{equation}
	This completes the proof.
\end{proof}

 \subsection{Every solution is a CSOLA}

\begin{proposition}
	\label{prop:if Green V exists it is the limit}
	Assume that a solution $u$ of \eqref{eq:fixed point formulation} exists, then $u_\ee \to u$ in $L^1 (\Omega)$.
\end{proposition}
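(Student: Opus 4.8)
The plan is to identify the CSOLA $\ulim$ produced by \Cref{thm:existence of CSOLA} with the assumed solution $u$; since $u_\ee \searrow \ulim$ monotonically in $L^1(\Omega)$, this identification immediately gives $u_\ee \to u$. I work with $\mu \ge 0$, which is the setting in which the monotone construction of $u_\ee$ is available (the sign-changing case reduces to this by linearity of $\Green_{V_\ee}$, or via the $V$-independent equi-integrability of \Cref{thm:Dunford-Pettis for G V with V bounded} followed by the identification below). Recall from \Cref{thm:existence of CSOLA} that the limit satisfies $\ulim + \Green(V\ulim) = \Green(\mu) - \sum_{x\in S}\alpha_\mu^x\,\Green(\Dirac_x)$ with $\alpha_\mu^x \ge 0$, whereas by hypothesis $u + \Green(Vu) = \Green(\mu)$. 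Hence it suffices to prove that every $\alpha_\mu^x$ vanishes: then $\ulim$ and $u$ satisfy the same fixed-point identity and the uniqueness result of \Cref{sec:uniqueness of Schrodinger} forces $\ulim = u$.

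First I would establish the ordering $u \le \ulim$. Since $\mu \ge 0$ one has $u \ge 0$ (by the positivity statement of \Cref{eq:existence fixed point}, or by repeating that argument for the full potential). Setting $w = u_\ee - u$ and subtracting the two fixed-point identities gives $w = \Green_{V_\ee}\bigl((V - V_\ee)u\bigr)$; as $V \ge V_\ee$ and $u \ge 0$, the datum $(V-V_\ee)u$ is nonnegative and lies in $L^1(\Omega)$, so the positivity of $\Green_{V_\ee}$ in \Cref{eq:existence fixed point} yields $w \ge 0$, i.e. $u_\ee \ge u$ for every $\ee$. Passing to the limit, $\ulim \ge u$, and therefore $v := u - \ulim \le 0$.

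The decisive step is a mass computation. Subtracting the identity for $\ulim$ from that for $u$, the function $v$ satisfies
\[
 v + \Green(Vv) = \sum_{x\in S}\alpha_\mu^x\,\Green(\Dirac_x).
\]
Testing with $\one$ and using the self-adjointness of $\Green$ from \Cref{thm:extension of G to measures}, together with $Vv \in L^1(\Omega)$ and $\Green(\one) \in \cC(\overline\Omega)$, I obtain
\[
 \int_\Omega v \;+\; \int_\Omega Vv\,\Green(\one) \;=\; \sum_{x\in S}\alpha_\mu^x\,\Green(\one)(x).
\]
Because $v \le 0$, the left-hand side is $\le 0$; because $\alpha_\mu^x \ge 0$ and $\Green(\one)(x) > 0$ at the interior points $x \in S$ (by \eqref{eq:estimate for G}), the right-hand side is $\ge 0$. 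Hence both sides vanish. From $\int_\Omega v = 0$ with $v \le 0$ we conclude $v = 0$, that is $\ulim = u$; and from $\sum_{x}\alpha_\mu^x\,\Green(\one)(x) = 0$ we recover $\alpha_\mu^x = 0$, so $\mu_r = \mu$. Monotone convergence then gives $u_\ee \to u$ in $L^1(\Omega)$.

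The main obstacle is the ordering step: one must transfer the comparison and positivity properties of $\Green_{V_\ee}$ from integrable data to the measure datum $\mu$, and secure the nonnegativity of the given solution $u$. A secondary technical point is the legitimacy of pairing the weak-$\star$ limit of $V_\ee u_\ee$ in $\cM(\Omega,\delta^\gamma)$ against the continuous test function $\Green(\one)$; this is exactly where the regularization hypothesis \eqref{eq:regularization} and the strict interior positivity of $\Green(\one)$ are used in an essential way.
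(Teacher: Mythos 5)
Your argument is correct, but it takes a noticeably longer route than the paper's, and in fact you already hold the paper's entire proof inside your ``ordering step'' without exploiting it. The paper subtracts the two fixed-point identities to get $w_\ee = u_\ee - u = \Green_{V_\ee}\bigl((V-V_\ee)u\bigr)$ --- exactly your identity --- and then finishes in two lines: $(V-V_\ee)u \to 0$ in $L^1(\Omega)$ by dominated convergence (dominated by $Vu \in L^1$), and the comparison $0 \le \Green_{V_\ee}(g) \le \Green(g)$ for $g \ge 0$ gives $0 \le w_\ee \le \Green\bigl((V-V_\ee)u\bigr) \to 0$ in $L^1(\Omega)$ by continuity of $\Green$ on $L^1$. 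No appeal to \Cref{thm:existence of CSOLA}, to the coefficients $\alpha_\mu^x$, or to a duality computation is needed, and the argument works verbatim for signed data by replacing $(V-V_\ee)u$ with its absolute value inside $\Green$. Your detour through the CSOLA theorem and the test function $\one$ is sound: the identity $v + \Green(Vv) = \sum_x \alpha_\mu^x\,\Green(\Dirac_x)$, the sign information $v \le 0$, $\alpha_\mu^x \ge 0$ (which does follow, as you implicitly use, from the positivity of the limit measure $\gamma$ and the fact that its atomic part is exactly $\sum_x\alpha_\mu^x\Dirac_x$), and the strict interior positivity of $\Green(\one)$ do force $v=0$ and $\alpha_\mu^x=0$. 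What your version buys is the extra conclusion $\mu_r=\mu$ whenever a solution exists --- information the paper extracts later by other means --- at the cost of heavier machinery, the restriction to $\mu\ge 0$ in the main line, and a small technical debt (pairing against $\one_\Omega$, which is not in $L^\infty_c(\Omega)$ as required by \eqref{eq:Laplace vwf}, though this is easily repaired via the kernel representation and Fubini). I would encourage you to notice that once you have written $w_\ee = \Green_{V_\ee}\bigl((V-V_\ee)u\bigr)$, the dominated convergence theorem closes the proof immediately.
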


\begin{proof}
	Let $u_\ee = \Green_{V_\ee} (\mu)$. Clearly $u = \Green_V (f) \ge 0$. By subtracting the two problems and letting $w_\ee = u_\ee - u$ and
	\begin{align}
	w_\ee &= \Green (\mu - V_\ee u_\ee ) - \Green(\mu - V u) \nonumber\\
	&= \Green (V u - V_\ee u_\ee ) \nonumber\\
	&= \Green ((V-V_\ee) u - V_\ee w_\ee)
	\end{align}
	Thus $w_\ee = \Green_{V_\ee} ((V-V_\ee)u)$.
	Since $V \ge V_\ee$ we have $(V-V_\ee)u \ge 0$. Also $(V - V_\ee)u \le V u \in L^1 (\Omega)$. Thus, by the Dominated Convergence Theorem and taking into account the pointwise limit we have
	\begin{equation}
	(V-V_\ee) u \to 0 \qquad L^1 (\Omega).
	\end{equation}
	Hence
	\begin{equation}
	0 \le w_\ee \le \Green ((V-V_\ee)u ) \to 0
	\end{equation}
	in $L^1 (\Omega)$.
\end{proof}

\subsection{CSOLAs are solutions if $\mu (S) = 0$. Solutions for $f \in L^1 (\Omega)$}

\begin{proposition}
	Let $\mu \ge 0$. Then,
	\begin{enumerate}[\rm i)]
			\item \label{it:scaling at 0}
		We can estimate the scaling at $0$ as
		\begin{equation}
		\label{eq:scaling estimate with respect to alpha mu}
		\lim_{\rho \to 0} \rho^{-2s} \int_{B_\rho (x)}  \ulim = c_x \Big(\mu(\{x\}) - \alpha_ \mu^x \Big).
		\end{equation}
		For some $c_x > 0$. In particular $\alpha_\mu^x \le \mu(\{x\})$.
		
		\item
		\label{it:existence if mu 0 is 0}
		If $\mu(\{x\}) = 0$ then $\alpha_{\mu}^x = 0$.
	\end{enumerate}
\end{proposition}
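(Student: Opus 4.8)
The plan is to establish both parts of the proposition using the local scaling machinery developed earlier, particularly \Cref{thm:local integral of G mu near support} and the properties \eqref{eq:G one rho} of the sequence $\Green(\one_{B_\rho})$, combined with the characterization of $\ulim$ from \Cref{thm:existence of CSOLA}(iv). The key identity to exploit is that $\ulim + \Green(V\ulim) = \Green(\mu_r)$ where $\mu_r = \mu - \sum_{x \in S} \alpha_\mu^x \Dirac_x$. Since $\Green$ is self-adjoint and the term $\Green(V\ulim)$ should contribute negligibly to the scaling at each isolated point $x \in S$, the local mass of $\ulim$ near $x$ ought to be governed entirely by the local mass of the reduced measure $\mu_r$ at $x$.

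First I would prove part \ref{it:scaling at 0}. Starting from $\ulim = \Green(\mu_r) - \Green(V\ulim)$, I would apply $\rho^{-2s}\int_{B_\rho(x)}$ to both sides and take $\rho \to 0$. For the first term, \Cref{thm:local integral of G mu near support} gives $\rho^{-2s}\int_{B_\rho(x)} \Green(\mu_r) \to c_x \,\mu_r(\{x\})$ for some $c_x > 0$, and since $\mu_r(\{x\}) = \mu(\{x\}) - \alpha_\mu^x$ (because the reduction only removes mass at points of $S$), this produces the right-hand side $c_x(\mu(\{x\}) - \alpha_\mu^x)$. The second term $\rho^{-2s}\int_{B_\rho(x)} \Green(V\ulim)$ must be shown to vanish in the limit: since $V\ulim \in L^1(\Omega)$ is an \emph{absolutely continuous} measure (no atom at $x$), \Cref{thm:local integral of G mu near support} applied with the measure $V\ulim \,\dx$ gives $\rho^{-2s}\int_{B_\rho(x)} \Green(V\ulim) \to c_x \cdot (V\ulim)(\{x\}) = 0$. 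Combining, I obtain \eqref{eq:scaling estimate with respect to alpha mu}. The inequality $\alpha_\mu^x \le \mu(\{x\})$ then follows immediately because $\ulim \ge 0$ forces the left-hand limit to be nonnegative, hence $\mu(\{x\}) - \alpha_\mu^x \ge 0$.

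For part \ref{it:existence if mu 0 is 0}, I would argue that if $\mu(\{x\}) = 0$ then the scaling limit in \eqref{eq:scaling estimate with respect to alpha mu} reads $c_x(-\alpha_\mu^x)$, which by $\ulim \ge 0$ must be $\ge 0$, giving $\alpha_\mu^x \le 0$. Combined with the already-established inequality $\alpha_\mu^x \le \mu(\{x\}) = 0$ from part (i), this alone only yields $\alpha_\mu^x \le 0$, so I also need the reverse inequality $\alpha_\mu^x \ge 0$. This follows directly from the construction in \Cref{thm:existence of CSOLA}(iii): the limit measure $\gamma = V\ulim + \sum_{x\in S}\alpha_\mu^x \Dirac_x$ is a weak limit of the nonnegative measures $V_\ee u_\ee \ge 0$, hence $\gamma \in \cM_+(\Omega)$, which forces each atomic coefficient $\alpha_\mu^x \ge 0$. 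Therefore $\alpha_\mu^x = 0$.

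The \textbf{main obstacle} I anticipate is justifying rigorously that the term $\rho^{-2s}\int_{B_\rho(x)} \Green(V\ulim)$ vanishes in the limit. The cleanest route is to invoke \Cref{thm:local integral of G mu near support} directly for the measure $V\ulim\,\dx \in \cM(\Omega)$, whose atom at $x$ is zero because it is absolutely continuous with respect to Lebesgue measure; then the proposition's conclusion gives the desired vanishing automatically. One subtlety to verify is that $V\ulim$ genuinely defines a finite Radon measure so that \Cref{thm:local integral of G mu near support} applies — this is guaranteed since $\ulim$ solves \eqref{eq:fixed point formulation} for $\mu_r$ and thus $V\ulim \in L^1(\Omega)$. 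With that confirmed, the two parts follow by assembling the scaling limits together with the sign information $\ulim \ge 0$ and $\gamma \ge 0$ already available from \Cref{thm:existence of CSOLA}.
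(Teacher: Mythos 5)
Your proof is correct and follows essentially the same route as the paper: part (i) in the paper is carried out by testing the identity $\ulim+\Green(V\ulim)=\Green(\mu_r)$ against $\psi=\rho^{-2s}\one_{B_\rho(x_0)}$ and invoking the estimates \eqref{eq:G one rho}, which by self-adjointness of $\Green$ is exactly your computation of $\rho^{-2s}\int_{B_\rho(x)}$ applied to each term, with \Cref{thm:local integral of G mu near support} handling both $\Green(\mu_r)$ and the atom-free term $\Green(V\ulim)$. The only (minor, equally valid) divergence is in part (ii): the paper squeezes $0\le\ulim\le\Green(\mu)$ and uses \Cref{thm:local integral of G mu near support} to show the scaling limit is exactly zero, whereas you combine the sign $\alpha_\mu^x\le 0$ from part (i) with $\alpha_\mu^x\ge 0$ coming from positivity of the limit measure $\gamma$ in \Cref{thm:existence of CSOLA}; both arguments close the gap.
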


\begin{proof}
	We prove \cref{it:scaling at 0}). We rearrange the fact that $\ulim = \Green_V (\mu - \sum_{x \in S} \alpha_\mu^x \Dirac_x)$ as
	\begin{equation}
	\sum_{x \in S} \alpha_\mu^x  \Green (\psi) (x) = \int_\Omega \Green(\psi) \mathrm{d} \mu - \int_{\Omega} \ulim \psi - \int_{\Omega} V \ulim \Green(\psi) .
	\end{equation}
	We subtract $\mu( \{x_0\}) \Green (\psi) (x_0) $ to deduce
	\begin{equation*}
	\Big (\alpha_\mu - \mu(x_0) \Big) \Green (\psi) (x_0) + \sum_{x_0 \ne x \in S} \alpha_\mu^x  \Green (\psi) (x)   = \int_\Omega \Green(\psi) d(\mu - \mu(x_0)\Dirac_{x_0}) - \int_{\Omega} \ulim \psi - \int_{\Omega} V \ulim \Green( \psi ) .
	\end{equation*}
	Take $\psi = \Green (\one_{ B_ \rho (x_0)} ) \rho^{-2s}$ and we deduce, due to \eqref{eq:G one rho} that
	\begin{align}
	c(\alpha_\mu - \mu(x_0)) &= -\lim_{\rho \to 0} \rho^{-2s} \int_{B_\rho (x_0)}  \ulim \le 0.
	\end{align}
	where $c>0$. \\
	
	We now prove \cref{it:existence if mu 0 is 0}). If $\mu (x_0)=0$ we can apply \Cref{thm:local integral of G mu near support} to deduce
	\begin{equation}
	0 \le \frac 1 c \lim_{ \rho \to 0 } \rho^{-2s} \int_{B_\rho (x_0)}  \ulim \le  \frac 1 c \lim_{ \rho \to 0 } \rho^{-2s} \int_{B_\rho (x_0)}  \Green( \mu ) = 0.
	\end{equation}
	Combining this with \cref{it:scaling at 0} we deduce that $\alpha_ \mu^{x_0} = 0$.
\end{proof}

	\begin{corollary}
		\label{cor:CSOLA measures away from S}
		If $\mu (S) = 0$ then $  \Green_{V_\ee} (\mu) \to \ulim$ in $L^1(\Omega)$, where $\ulim$ satisfies $\ulim = \Green(\mu - V \ulim)$.
	\end{corollary}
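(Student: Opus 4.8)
The plan is to obtain the corollary almost immediately from \Cref{thm:existence of CSOLA} and the proposition preceding it, since the substantive work is already done there; the only new ingredient is an elementary observation about point masses. I would first treat the case $\mu\ge0$, which matches the hypotheses of \Cref{thm:existence of CSOLA}. Because $S$ is a \emph{finite} set, the hypothesis decomposes as
\begin{equation}
\mu(S)=\sum_{x\in S}\mu(\{x\})=0,
\end{equation}
and, each summand being nonnegative, this forces $\mu(\{x\})=0$ for every $x\in S$. I would then invoke part \cref{it:existence if mu 0 is 0} of the preceding proposition, which states precisely that $\mu(\{x\})=0$ implies $\alpha_\mu^x=0$.

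With all anomalous coefficients vanishing, the reduced measure collapses onto the original datum,
\begin{equation}
\mu_r=\mu-\sum_{x\in S}\alpha_\mu^x\,\Dirac_x=\mu.
\end{equation}
Substituting this into the limit equation furnished by \Cref{thm:existence of CSOLA}, item \ref{it:problem for ulim}, namely $\ulim+\Green(V\ulim)=\Green(\mu_r)$, yields $\ulim+\Green(V\ulim)=\Green(\mu)$, i.e. $\ulim=\Green(\mu-V\ulim)$, which is the asserted identity. The convergence $\Green_{V_\ee}(\mu)=u_\ee\to\ulim$ in $L^1(\Omega)$ is nothing other than \Cref{thm:existence of CSOLA}, item \ref{it:convergence uee}.

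The one point worth a word of care is that the right-hand side $\mu-V\ulim$ be an admissible argument for $\Green$; this is already guaranteed inside the framework of \Cref{thm:existence of CSOLA}, since item \ref{it:convergence Vee uee as measure} with all $\alpha_\mu^x=0$ reads $V_\ee u_\ee\rightharpoonup V\ulim$ in $\cM(\Omega,\delta^\gamma)$, whence $V\ulim\in L^1(\Omega,\delta^\gamma)$ and $\mu-V\ulim$ is a genuine (weighted) measure. Finally, to lift the sign restriction I would treat a general $\mu$ with $|\mu|(S)=0$ by decomposing $\mu=\mu^+-\mu^-$, applying the nonnegative case to each part (so that $\mu^\pm_r=\mu^\pm$), and combining by linearity of $\Green$ and of $\Green_{V_\ee}$. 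I expect no genuine obstacle: the entire content of the proof is the observation that vanishing point masses annihilate the coefficients $\alpha_\mu^x$, so the CSOLA solves the \emph{unreduced} problem exactly.
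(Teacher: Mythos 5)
Your argument is correct and is exactly the derivation the paper intends: the corollary is stated without proof precisely because, with $S$ finite and $\mu\ge 0$, the hypothesis $\mu(S)=0$ forces $\mu(\{x\})=0$ for each $x\in S$, so item \ref{it:existence if mu 0 is 0} of the preceding proposition gives $\alpha_\mu^x=0$, hence $\mu_r=\mu$ in \Cref{thm:existence of CSOLA}. Your additional care about signed measures (reading the hypothesis as $|\mu|(S)=0$ and splitting into $\mu^\pm$) is a sensible clarification consistent with how the paper uses the corollary later.
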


\begin{theorem}
	Let $V$ satisfy \eqref{eq:V singular 0}. Then, for every $0 \le f \in L^1 (\Omega)$ there is a solution $\ulim \in L^1 (\Omega)$. It is the unique solution of \eqref{eq1}.
\end{theorem}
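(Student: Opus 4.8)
The plan is to separate the statement into uniqueness and existence, treating uniqueness as essentially already done and concentrating on producing a solution and recognizing it as the CSOLA. For uniqueness, note that since $V$ satisfies \eqref{eq:V singular 0}, the set $\{V=+\infty\}$ is contained in the finite set $S$ and hence is Lebesgue-null; the uniqueness theorem of \Cref{sec:uniqueness of Schrodinger} then applies verbatim, so there is at most one $u\in L^1(\Omega)$ solving \eqref{eq:fixed point formulation}. It remains only to construct one.

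For existence I would take $\mu = f\,\dx$ and feed it into the approximation machinery already built. Because $f\in L^1(\Omega)$, the measure $f\,\dx$ is absolutely continuous and the finite set $S$ is null, so $\mu(S)=0$. This is exactly the hypothesis of \Cref{cor:CSOLA measures away from S}: the approximations $u_\ee=\Green_{V_\ee}(f)$ decrease to a limit $\ulim\in L^1(\Omega)$ with $\ulim=\Green(f-V\ulim)$ and no atomic defect at the points of $S$ (equivalently, every $\alpha_\mu^x$ of \Cref{thm:existence of CSOLA} vanishes by the ``$\mu(\{x\})=0\Rightarrow\alpha_\mu^x=0$'' part of the preceding proposition). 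Thus the reduced measure coincides with $f\,\dx$ and the limit formally solves the correct equation; what is left is to upgrade this to a genuine dual solution, i.e. to verify the integrability condition $V\ulim\in L^1(\Omega)$ so that $\Green(f-V\ulim)$ is understood through $\Green\colon L^1\to L^1$.

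This integrability is the heart of the matter and the only place where the structure of \eqref{eq:V singular 0} — that $V$ is singular only on a finite set $S\subset\Omega$ sitting in the interior — is really used. The idea is that the estimate \eqref{eq:estimate norm Vu} is uniform along the approximation. Fix $\rho>0$ small and set $K=\overline{\bigcup_{x\in S}B_\rho(x)}\Subset\Omega$, so that $S\subset\mathrm{int}(K)$ and $K$ stays away from $\partial\Omega$. Applying \eqref{eq:estimate norm Vu} to $u_\ee=\Green_{V_\ee}(f)$ with this $K$, the constant involves only $\inf_K\Green(1)>0$ (positive because $K$ is a compact subset of the interior, where $\Green(1)\in\cC(\overline\Omega)$ is strictly positive) and $\|V_\ee\|_{L^\infty(\Omega\setminus K)}\le\|V\|_{L^\infty(\Omega\setminus K)}<+\infty$ (finite by \eqref{eq:V singular 0}, since $V$ is bounded off $S$, in particular near $\partial\Omega$). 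Neither quantity depends on $\ee$, so $\int_\Omega V_\ee u_\ee \le C\|f\|_{L^1}$ uniformly in $\ee$.

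To pass to the limit I would use that $u_\ee\searrow\ulim$ (hence $u_\ee\to\ulim$ a.e.) while $V_\ee\nearrow V$ pointwise, so $V_\ee u_\ee\to V\ulim$ a.e.; Fatou's lemma then gives $\int_\Omega V\ulim\le\liminf_\ee\int_\Omega V_\ee u_\ee\le C\|f\|_{L^1}$, i.e. $V\ulim\in L^1(\Omega)$. Combined with $\ulim\in L^1(\Omega)$ and $\ulim=\Green(f-V\ulim)$, this shows $\ulim$ satisfies \eqref{eq:fixed point formulation}, so it is a dual solution, and by the uniqueness above it is the unique one. The main obstacle is precisely this uniform control of $\int_\Omega V_\ee u_\ee$ near $S$: away from $S$ it is trivial since $V$ is bounded, while near $S$ one cannot bound $V$ but can instead bound $u_\ee$ in a weighted sense through $\Green(1)$, exploiting that $\Green(1)$ is bounded below on the interior compact set $K\supset S$.
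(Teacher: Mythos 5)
Your proposal is correct and follows essentially the same route as the paper, which states this theorem as an immediate consequence of \Cref{cor:CSOLA measures away from S} (applied to $\mu = f\,\dx$, which charges no point of the finite set $S$) together with the uniqueness result of \Cref{sec:uniqueness of Schrodinger}. The only added value in your write-up is that you make explicit the uniform bound $\int_\Omega V_\ee u_\ee \le C\|f\|_{L^1}$ via \eqref{eq:estimate norm Vu} and the Fatou passage giving $V\ulim \in L^1(\Omega)$, a step the paper carries out inside the proof of \Cref{thm:existence of CSOLA} rather than here.
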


\subsection{The operator $\Green_V : L^1 (\Omega) \to L^1 (\Omega)$}

\begin{corollary}
	Let $V$ satisfy \eqref{eq:V singular 0}. Then, $\Green_V : L^1 (\Omega) \to L^q (\Omega)$ for all $q < Q(1)$ is linear and continuous and $\Green_V(f)$ is the unique dual solution of \eqref{eq1}.
\end{corollary}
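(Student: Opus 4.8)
The plan is to assemble the three asserted properties---well-posedness, linearity, and continuity---from results already in hand, so that the only genuinely quantitative step is the continuity estimate. First I would settle existence and uniqueness. The immediately preceding existence theorem produces a dual solution for every $0 \le f \in L^1(\Omega)$; for sign-changing $f$ I split $f = f_+ - f_-$, apply that theorem to each nonnegative part, and set $\Green_V(f) := \Green_V(f_+) - \Green_V(f_-)$. Because $\Green$ is linear and the defining relation $u = \Green(f - Vu)$ is affine in the pair $(u,f)$, this difference is again a dual solution with data $f$. Uniqueness comes directly from the theorem of \Cref{sec:uniqueness of Schrodinger}: since $S$ is finite, hypothesis \eqref{eq:V singular 0} gives $|\{V = +\infty\}| \le |S| = 0$, so at most one dual solution can exist. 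Hence $\Green_V(f)$ is well defined as \emph{the} unique dual solution of \eqref{eq1}.

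Linearity is then a formal consequence of uniqueness: given $f_1, f_2 \in L^1(\Omega)$ and scalars $a,b$, the function $a\,\Green_V(f_1) + b\,\Green_V(f_2)$ solves $u = \Green(af_1 + bf_2 - Vu)$ by linearity of $\Green$, so it must coincide with $\Green_V(af_1 + bf_2)$.

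The continuity estimate is the single substantive step, and it rests on the pointwise domination
\[
|\Green_V(f)| \le \Green(|f|) \qquad \text{a.e. in } \Omega.
\]
To obtain this I would return to the truncated potentials $V_k = V \wedge k \in L^\infty_+(\Omega)$ and the solutions $u_k = \Green_{V_k}(f)$, which by \Cref{eq:existence fixed point} satisfy $|u_k| \le \Green(|f|)$ uniformly in $k$. Applying \Cref{thm:existence of CSOLA} to $f_+$ and $f_-$ separately gives $u_k \to \Green_V(f)$ in $L^1(\Omega)$, and extracting an a.e.-convergent subsequence transfers the bound to the limit. Once the domination holds, \Cref{thm:regularization from L1} furnishes, for every $1 \le q < Q(1)$,
\[
\|\Green_V(f)\|_{L^q(\Omega)} \le \|\Green(|f|)\|_{L^q(\Omega)} \le C\,\|f\|_{L^1(\Omega)},
\]
which is precisely the continuity of the linear map $\Green_V : L^1(\Omega) \to L^q(\Omega)$.

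The main (and essentially only) obstacle is the clean justification of the domination through this double approximation: I must verify that the uniform bound $|u_k| \le \Green(|f|)$ survives the $L^1$ limit and that the decomposition into $f_\pm$ is genuinely compatible with the limit operator, i.e.\ that $\Green_V$ on sign-changing data really is the $L^1$-limit of $\Green_{V_k}$ and not merely a candidate built by hand. Everything beyond this point---linearity, uniqueness, and the final norm inequality---follows formally from the uniqueness already established and the $L^1\!\to\!L^q$ regularization of $\Green$.
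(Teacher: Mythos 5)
Your proposal is correct and follows essentially the same route as the paper: truncate $V_k = V\wedge k$, use the uniform domination $|\Green_{V_k}(f)| \le \Green(|f|)$ from the bounded-potential theory, pass to the limit, and conclude via the $L^1\to L^q$ regularization of $\Green$; the paper transfers the bound to the limit by norm lower semicontinuity and weak $L^q$ compactness whereas you do it pointwise a.e.\ along a subsequence, which is an immaterial difference. Your extra care with the splitting $f = f_+ - f_-$ and with invoking the general uniqueness theorem (noting $|\{V=+\infty\}|\le |S| = 0$) actually fills in details the paper leaves implicit.
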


\begin{proof}
	For $f \in L^1 (\Omega)$ it is clear that the measure $\mu = f \dx $ satisfies $\mu( 0 ) = 0$. In particular $\Green_V (f)$ is defined. Furthermore, due to the strong $L^1$ convergence we have that
	\begin{equation}
		\| \Green_V (f) \|_{L^1} = \lim_{ \ee \to  0} \| \Green_{V _\ee} (f) \|_{L^1} \le \| \Green (f) \|_{L^1} \le C \| f \|_{L^1 }.
	\end{equation}
	Linearity is trivial and the result is proven.
	
	In fact, since $| \Green_{V_\ee} (f)|  \le |\Green (f)|$ we have, for $ 1\le q < Q(1)$,
	\begin{equation}
		\| | \Green_{V_\ee} (f) \||_{L^q (\Omega)}  \le \| \Green (f) \|_{L^q (\Omega) }.
	\end{equation}
	For $q>1$ we have weak $L^q(\Omega)$ compactness, and hence
	\begin{equation}
				\| \Green_V (f) \|_{L^q} \le \lim_{ \ee \to  0} \| \Green_{V _\ee} (f) \|_{L^q} \le \| \Green (f) \|_{L^q} \le C \| f \|_{L^1 }.
	\end{equation}
	This proves the result.
\end{proof}

\begin{remark}
	In fact, an $\Green_V : L^1 \to L^1$ theory can be constructed simply under the hypothesis $\Green : L^\infty \to L^\infty$. However, the aim of the paper is the study of measures.
\end{remark}

\section{Solvability. Characterization of the reduced measure}\label{sec.nonex}

We address now the cases where $\mu$ and $V$ are not compatible. We start by point masses.

\subsection{Concentration of measures when $\mu = \Dirac_x$. Possible non-existence}

When the measure $\mu$ is precisely a Dirac delta at $0$ we show that non-existence is due to a concentration of measure.
We remind the reader that we define the set $Z$ of incompatible points as
\begin{equation}
	Z = \{  x \in \Omega :  \textrm{ there is no solution of } u = \Green (\Dirac_x - Vu) \textrm{ such that } u, Vu \in L^1 (\Omega)   \}.
\end{equation}
\begin{theorem}
	\label{thm:concentration of measures}
	Assume \eqref{eq:V singular 0}. And let $u_\ee =\Green_{V_\ee} (\Dirac_x)$, i.e. solving $u_\ee = \Green(\Dirac_x - V_\ee u_\ee)$.
	\begin{equation}
		u_\ee = \Green_{V_\ee} (\Dirac_x) \searrow \ulim \quad \textrm{ where }
		\begin{dcases}
		 	\ulim \textrm{ is the unique solution of } u = \Green (\delta_x - Vu) & \textrm{if } x \notin Z, \\
		 	\ulim = 0 & \textrm{if }x \in Z.
		\end{dcases}
	\end{equation}	
	Furthermore, we have
	\begin{equation}
	V_\ee u_\ee \rightharpoonup
	\begin{dcases}
		V \ulim & \textrm{if }x \notin Z, \\
		\Dirac_x, & \textrm{if } x \in Z.
	\end{dcases}
\end{equation}
weak-$\star$ in $\cM (\Omega)$.
\end{theorem}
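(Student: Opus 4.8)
The plan is to specialize the CSOLA machinery of \Cref{thm:existence of CSOLA} to $\mu=\delta_x$ and then pin down the single surviving concentration coefficient. Applying that theorem gives $u_\ee\searrow\ulim$ in $L^1(\Omega)$ together with the identity $\ulim+\Green(V\ulim)=\Green(\mu_r)$ for $\mu_r=\delta_x-\sum_{y\in S}\alpha_{\delta_x}^y\delta_y$. Since $\delta_x(\{y\})=0$ for every $y\ne x$, point \ref{it:existence if mu 0 is 0}) of the preceding proposition forces $\alpha_{\delta_x}^y=0$ for all $y\in S\setminus\{x\}$, so only one coefficient remains; writing $\alpha:=\alpha_{\delta_x}^x$, the nonnegativity of the limit measure and point \ref{it:scaling at 0}) give $\alpha\in[0,1]$, whence $\mu_r=(1-\alpha)\delta_x$ and $\ulim+\Green(V\ulim)=(1-\alpha)\Green(\delta_x)$. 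Everything then reduces to the dichotomy $\alpha\in\{0,1\}$, with $\alpha=0$ precisely when $x\notin Z$.

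Before the dichotomy I would record that $V\ulim\in L^1(\Omega)$ (unweighted) and that $\{V_\ee u_\ee\}$ is bounded in $\cM(\Omega)$. This is exactly where \eqref{eq:V singular 0} enters: on $\Omega\setminus B_\rho(S)$ the potential is bounded, so $V\ulim\le\|V\|_{L^\infty(\Omega\setminus B_\rho(S))}\,\ulim\in L^1$ and $V_\ee u_\ee\le\|V\|_{L^\infty(\Omega\setminus B_\rho(S))}\,\Green(\delta_x)\in L^1$; near the \emph{interior} singular set $S$ the weight $\delta^\gamma$ is bounded below, so the weighted bounds $V\ulim\in L^1(\Omega,\delta^\gamma)$ from \ref{it:convergence Vee uee as measure}) and $\int_\Omega V_\ee u_\ee\,\delta^\gamma\le C$ from \eqref{eq:estimate norm Vu} upgrade to control on $B_\rho(S)$. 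Summing the two regions gives $V\ulim\in L^1(\Omega)$ and a uniform total-variation bound on $V_\ee u_\ee$.

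Next I would run the dichotomy. If $x\notin Z$ a dual solution $u$ of $u=\Green(\delta_x-Vu)$ exists, and \Cref{prop:if Green V exists it is the limit} gives $u_\ee\to u$, so $\ulim=u$ solves $\ulim+\Green(V\ulim)=\Green(\delta_x)$; comparing with the identity above and using $\Green(\delta_x)=\G(\cdot,x)>0$ by \eqref{eq:estimate for G} yields $\alpha=0$. Conversely, assume $x\in Z$ and, for contradiction, $\alpha<1$. Then $v:=(1-\alpha)^{-1}\ulim\ge0$ satisfies $v+\Green(Vv)=\Green(\delta_x)$ with $v\in L^1(\Omega)$ and, by the previous paragraph, $Vv\in L^1(\Omega)$, so $v$ is a dual solution of $v=\Green(\delta_x-Vv)$, contradicting $x\in Z$. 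Hence $\alpha=1$, $\mu_r=0$, and $\ulim+\Green(V\ulim)=0$, which, both summands being nonnegative, forces $\ulim=0$.

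Finally I would identify the weak-$\star$ limit of $V_\ee u_\ee$. Given $\phi\in\cC_c^\infty(\Omega)$, set $\psi:=\Ls\phi\in L^\infty(\Omega)$ so that $\Green(\psi)=\phi$; the weak-dual formulation for $u_\ee$ tested against $\psi$ reads $\int_\Omega V_\ee u_\ee\,\phi=\phi(x)-\int_\Omega u_\ee\,\Ls\phi$. Letting $\ee\to0$ and using $u_\ee\to\ulim$ in $L^1$ gives $\int_\Omega V_\ee u_\ee\,\phi\to\phi(x)-\int_\Omega\ulim\,\Ls\phi$; for $x\in Z$ this is $\phi(x)=\int\phi\,d\delta_x$ since $\ulim=0$, while for $x\notin Z$ the weak-dual formulation satisfied by $\ulim$ shows it equals $\int_\Omega V\ulim\,\phi$. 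Density of $\cC_c^\infty(\Omega)$ in $\cC_0(\Omega)$ plus the $\cM(\Omega)$-bound of the second paragraph upgrades these to the claimed weak-$\star$ convergences $V_\ee u_\ee\rightharpoonup\delta_x$ and $V_\ee u_\ee\rightharpoonup V\ulim$. The hard part will be the rigorous passage from the weighted statements of \Cref{thm:existence of CSOLA}, which live in $\cM(\Omega,\delta^\gamma)$, to the unweighted conclusions, and the self-improvement step excluding $0<\alpha<1$; the interior localization of $S$ (making $\delta^\gamma$ bounded below near the concentration point) is precisely what powers both.
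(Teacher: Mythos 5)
Your proposal is correct and follows essentially the same route as the paper: both rest on \Cref{thm:existence of CSOLA} to produce $\ulim$ and the single coefficient $\alpha=\alpha_{\delta_x}^x\in[0,1]$, on \Cref{prop:if Green V exists it is the limit} to identify any solution with the CSOLA, and on the rescaling $\ulim/(1-\alpha)$ to exclude $0<\alpha<1$. You merely organize the dichotomy by cases on $x\in Z$ rather than on the value of $\alpha$, and you spell out the upgrade from the weighted space $\cM(\Omega,\delta^\gamma)$ to $\cM(\Omega)$ (using that $S$ is interior), a point the paper leaves implicit.
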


\begin{proof}[Proof of \Cref{thm:concentration of measures}]
(i)	If $x \notin S$ we apply \Cref{cor:CSOLA measures away from S} and we deduce that there is a solution of $u = \Green (\Dirac_x - Vu)$. Therefore $x \notin Z$.
	
(ii)	If $x \in S$ we know
	\begin{equation}
	V_\ee u_\ee \rightharpoonup V \ulim + \alpha_{ \Dirac_x }^x \delta_x.
	\end{equation}
	Since it will not lead to confusion, let us just use $\alpha =  \alpha_{ \Dirac_x }^x$. The reduced measure is
	\begin{equation}
		(\Dirac_x)_r = (1 - \alpha) \Dirac_x.
	\end{equation}

(iii)	If $\alpha = 1$ then $(\Dirac_x)_r = 0$, and so $\ulim = 0$. Clearly $\ulim = 0 \ne \Green (\delta_x) = \Green (\delta_x - V \ulim )$. By \Cref{prop:if Green V exists it is the limit} if there was a solution of $u = \Green (\delta_x - Vu)$, then $u = \ulim$, and so there is no solution.
	
(iv)	If $\alpha \ne 1$ we define
	\begin{equation}
		U := \frac{\ulim}{1- \alpha} =  \frac{1}{1-\alpha} \Green \left(   (\Dirac_x)_r  - V \ulim    \right)= \frac{1}{1-\alpha} \Green \left(   (1-\alpha) \Dirac_x   - V \ulim    \right) = \Green (\Dirac_x - V U).
	\end{equation}
	Hence, by \Cref{prop:if Green V exists it is the limit} we have $\underline u = U$ and, therefore, $ \alpha = 0$.
\end{proof}

\subsection{Characterization of the reduced measure}

We obtain an immediate consequence of the point mass analysis.

\begin{theorem}
	\label{thm:characterization of reduced measure}
	Assume \eqref{eq:V singular 0}. Then,
	\begin{equation}
	\label{eq:characterization of reduced measure}
	\mu_r = \mu - \sum_{x \in Z } \mu (\{x\}) \Dirac_x.
	\end{equation}
\end{theorem}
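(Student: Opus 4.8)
The plan is to pin down the coefficients $\alpha_\mu^x$ appearing in the reduced measure $\mu_r = \mu - \sum_{x \in S}\alpha_\mu^x \Dirac_x$ furnished by \Cref{thm:existence of CSOLA}. Since the final characterization asserts $Z \subset S$, it suffices to prove that $\alpha_\mu^x = \mu(\{x\})$ whenever $x \in Z$ and $\alpha_\mu^x = 0$ whenever $x \in S \setminus Z$; the terms indexed by $S \setminus Z$ then disappear from the sum and we recover \eqref{eq:characterization of reduced measure}.

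First I would establish that the map $\mu \mapsto \alpha_\mu^x$ is additive. For fixed $\ee$ the operator $\Green_{V_\ee}$ is linear, so if $\mu = \mu_1 + \mu_2$ with $\mu_i \ge 0$ then $u_\ee = \Green_{V_\ee}(\mu) = \Green_{V_\ee}(\mu_1) + \Green_{V_\ee}(\mu_2)$. Passing to the limit via \Cref{thm:existence of CSOLA} gives $\ulim = \ulim_1 + \ulim_2$ in $L^1(\Omega)$, and comparing the (unique) weak limits of $V_\ee u_\ee$ in $\cM(\Omega,\delta^\gamma)$ yields $\alpha_\mu^x = \alpha_{\mu_1}^x + \alpha_{\mu_2}^x$. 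Signed measures are handled by splitting $\mu = \mu^+ - \mu^-$ and invoking the same linearity, so it is enough to treat $\mu \ge 0$.

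Next I would decompose $\mu = \mu' + \sum_{y \in S}\mu(\{y\})\Dirac_y$, where $\mu' = \mu - \sum_{y \in S}\mu(\{y\})\Dirac_y$ carries no atom at any point of the finite set $S$, i.e. $\mu'(\{y\}) = 0$ for every $y \in S$ (and $\mu' \ge 0$, being $\mu$ restricted to $\Omega \setminus S$). By additivity,
\[
\alpha_\mu^x = \alpha_{\mu'}^x + \sum_{y \in S}\mu(\{y\})\,\alpha_{\Dirac_y}^x .
\]
Here $\alpha_{\mu'}^x = 0$, and $\alpha_{\Dirac_y}^x = 0$ for $y \ne x$, both because the relevant measure has no mass at $x$ (the fact, shown above, that $\mu(\{x\}) = 0$ forces $\alpha_\mu^x = 0$; see \cref{it:existence if mu 0 is 0}). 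Hence only the diagonal term survives, $\alpha_\mu^x = \mu(\{x\})\,\alpha_{\Dirac_x}^x$. Finally \Cref{thm:concentration of measures} computes the diagonal coefficient for a single point mass: $\alpha_{\Dirac_x}^x = 1$ if $x \in Z$ and $\alpha_{\Dirac_x}^x = 0$ if $x \notin Z$. Substituting gives $\alpha_\mu^x = \mu(\{x\})$ for $x \in Z$ and $\alpha_\mu^x = 0$ otherwise, which is exactly the claim.

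The main obstacle is the additivity step: one must verify that the singular parts of the weak limits genuinely decouple and add, which rests on the uniqueness of the limiting measure $V\ulim + \sum_{x\in S} \alpha_\mu^x \Dirac_x$ together with the fact that its singular support sits inside the fixed finite set $S$, so that matching Dirac coefficients point by point is legitimate. Once additivity is secured, the statement reduces entirely to the already-established point-mass analysis and to the vanishing of $\alpha^x$ in the absence of an atom at $x$, and no further estimates are required.
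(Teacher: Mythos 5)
Your proposal is correct and follows essentially the same route as the paper: decompose $\mu$ into a part with no atoms on $S$ plus the point masses, exploit linearity of $\Green_{V_\ee}$ and pass to the limit by superposition, and then conclude from the point-mass analysis ($\alpha_{\Dirac_x}^x = 1$ on $Z$, $=0$ off $Z$) together with the fact that $\mu(\{x\})=0$ forces $\alpha_\mu^x=0$. The only cosmetic difference is the final identification of coefficients: you match the Dirac atoms in the weak limit of $V_\ee u_\ee$ via uniqueness of the Lebesgue decomposition, whereas the paper reduces to the identity $\sum_{x\in S}\alpha_x^\mu\,\Green(\Dirac_x)=\sum_{x\in Z}\mu(\{x\})\,\Green(\Dirac_x)$ and separates the coefficients with the local scaling of \Cref{thm:local integral of G mu near support}; both are legitimate.
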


\begin{proof}
	By writing the decomposition
	\begin{equation}
		\mu = \mu - \sum_{x \in S} \mu (x) \delta_x + \sum_{x \in S \setminus Z} \mu (x) \delta_x + \sum_{x \in Z} \mu (x) \delta x.
	\end{equation}
	We solve the approximating problems by superposition
	\begin{equation}
			\Green_{V_\ee}	(\mu) = \Green_{V_\ee} \left( \mu - \sum_{x \in S} \mu (x) \delta_x \right) + \sum_{x \in S \setminus Z} \mu (x) \Green_{V_\ee} (\delta_x) + \sum_{x \in Z} \mu (x) \Green_{V_\ee} (\Dirac_x).
	\end{equation}
	We know from \Cref{thm:existence of CSOLA} we know that
	\begin{equation}
		\Green_{V_\ee} (\mu) \to \ulim, \qquad \ulim = \Green( \mu_r - V \ulim )
	\end{equation}
	Using \Cref{cor:CSOLA measures away from S} and \Cref{thm:concentration of measures} we deduce that
	\begin{align*}
		\Green_{V_\ee} \left( \mu - \sum_{x \in S} \mu (x) \delta_x \right) &\to u_1, \qquad u_1 = \Green \left(    \mu - \sum_{x \in S} \mu (x) \delta_x    - Vu_1     \right) \\
		\sum_{x \in S \setminus Z} \mu (x) \Green_{V_\ee} (\delta_x) &\to u_2, \qquad u_2 =   \Green \left(    \sum_{x \in S \setminus Z} \mu (x) \delta_x   - Vu_1     \right)\\
		 \sum_{x \in Z} \mu (x) \Green_{V_\ee} (\Dirac_x)& \to 0.
	\end{align*}
	Hence $\ulim = u_1 + u_2$ and we have that
	\begin{gather*}
		\Green(\mu_r - V\ulim ) =  \Green \left(    \mu - \sum_{x \in S} \mu (x) \delta_x    - Vu_1     \right) + \Green \left(    \sum_{x \in S \setminus Z} \mu (x) \delta_x   - Vu_1     \right)\\
		\Green( \mu_r - V \ulim  ) = \Green \left(    \mu - \sum_{x \in Z} \mu (x) \delta_x    - V(u_1 + u_2)     \right) \\
		\Green( \mu_r ) = \Green \left(    \mu - \sum_{x \in Z} \mu (x) \delta_x        \right)\\
		\sum_{x \in S} \alpha_x^\mu \Green (\delta_x) = \sum_{x \in Z} \mu (x) \Green (\delta_x).
	\end{gather*}
	Using the scaling in \Cref{thm:local integral of G mu near support} we deduce that
	\begin{equation}
		\alpha_x^\mu =
		\begin{dcases}
			\mu(x) & x \in Z, \\
			0 & x \notin Z.
		\end{dcases}
	\end{equation}
	This completes the proof.
\end{proof}

\subsection{Necessary and sufficient condition for existence of solution}
In this way we get the necessary and sufficient condition for existence of solution of \eqref{eq1}.

\begin{theorem}
	There exists a dual solution of \eqref{eq1} with data $\mu \in \cM(\Omega)$ if and only if $|\mu| (Z) = 0$.
\end{theorem}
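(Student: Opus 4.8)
The plan is to reduce the equivalence to the single identity $\mu_r = \mu$, where $\mu_r$ is the reduced measure furnished by \Cref{thm:characterization of reduced measure}, and then to read the characterization off its explicit form $\mu_r = \mu - \sum_{x \in Z}\mu(\{x\})\Dirac_x$. The central tools are \Cref{prop:if Green V exists it is the limit}, which forces any dual solution to coincide with the CSOLA $\ulim = \lim_{\ee\to 0}\Green_{V_\ee}(\mu)$, and \Cref{thm:existence of CSOLA}, which guarantees that $\ulim$ always solves the reduced problem $\ulim = \Green(\mu_r - V\ulim)$. Since $\Green_{V_\ee}$ is linear for bounded potentials, both facts extend to signed $\mu$ by splitting $\mu = \mu^+ - \mu^-$, so I work with general $\mu \in \cM(\Omega)$ throughout.

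For sufficiency, I would assume $|\mu|(Z) = 0$. Because $Z \subset S$ is finite and $|\mu|(\{x\}) = |\mu(\{x\})|$ on singletons, this forces $\mu(\{x\}) = 0$ for every $x \in Z$, whence $\mu_r = \mu - \sum_{x\in Z}\mu(\{x\})\Dirac_x = \mu$. Then \Cref{thm:existence of CSOLA} gives $\ulim = \Green(\mu_r - V\ulim) = \Green(\mu - V\ulim)$, so that $\ulim$ is exactly a dual solution of \eqref{eq1} with data $\mu$, and existence follows.

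For necessity, suppose a dual solution $u$ exists. By \Cref{prop:if Green V exists it is the limit} we have $u = \ulim$, and by \Cref{thm:existence of CSOLA} this same function satisfies $\ulim = \Green(\mu_r - V\ulim)$. Subtracting the two identities $u = \Green(\mu - Vu)$ and $u = \Green(\mu_r - Vu)$ and using linearity of $\Green$ on measures yields $\Green(\mu - \mu_r) = 0$ in $L^1(\Omega)$. The difference $\nu := \mu - \mu_r = \sum_{x\in Z}\mu(\{x\})\Dirac_x$ is a finite atomic measure supported on $Z$, so I recover its atoms through the local scaling of \Cref{thm:local integral of G mu near support}: for each $x_0 \in Z$,
\begin{equation*}
0 = \lim_{\rho\to 0}\rho^{-2s}\int_{B_\rho(x_0)}\Green(\nu) \asymp \nu(\{x_0\}) = \mu(\{x_0\}).
\end{equation*}
Hence $\mu(\{x\}) = 0$ for all $x \in Z$, and summing over the finite set $Z$ gives $|\mu|(Z) = \sum_{x\in Z}|\mu(\{x\})| = 0$.

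The only genuinely delicate point is the necessity direction, where the abstract coincidence $u = \ulim$ must be upgraded to the quantitative statement $\mu_r = \mu$; this is precisely where the injectivity of $\Green$ on atomic measures — supplied by the scaling asymptotics of \Cref{thm:local integral of G mu near support} — is essential, and where the hypothesis that $Z$ be finite (a consequence of \eqref{eq:V singular 0}) is used to pass from the vanishing of individual atoms to the vanishing of $|\mu|(Z)$.
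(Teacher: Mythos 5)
Your proof is correct and follows essentially the same route as the paper: both directions reduce to the identity $\mu_r=\mu$ via \Cref{prop:if Green V exists it is the limit}, \Cref{thm:existence of CSOLA} and \Cref{thm:characterization of reduced measure}. The only difference is that you spell out the injectivity step $\Green(\mu-\mu_r)=0\Rightarrow\mu(\{x\})=0$ on $Z$ using the scaling of \Cref{thm:local integral of G mu near support}, which the paper asserts without detail but which is exactly the tool it uses for the analogous step elsewhere.
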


\begin{proof}
	By \Cref{thm:existence of CSOLA} know that the CSOLA exists and it solves the problem with the reduced measure. By \Cref{prop:if Green V exists it is the limit}, if a solution exists it is the CSOLA. Therefore $\Green(\mu_r - Vu) = u = \Green (\mu - Vu)$. Hence $\Green(\mu) = \Green (\mu_r)$.
	Then, due to \Cref{thm:characterization of reduced measure} this implies that
	\begin{equation}
		\sum_{x \in Z} \mu(\{x\}) \Green(\Dirac_x) = 0.
	\end{equation}
	This is equivalent to $\mu(\{x\}) = 0$ for all $x \in Z$. Since $Z$ is countable, this is equivalent to $|\mu|(Z) = 0$.
\end{proof}

\section{Properties and representation of $\Green_V$}\label{sec:representation of G_V for general V}

\subsection{Extension of $\Green_V$. The CSOLA operator}
We can define the CSOLA operator, $\widetilde \Green_V$, which can be understood both as the limit of $\Green_{V_\ee}:\cM (\Omega) \to L^1 (\Omega)$ or as the extension of $\Green_V:L^1 (\Omega) \to L^1 (\Omega)$ to the space of measures:
\begin{equation}
\widetilde \Green_V (\mu) = \Green_V (\mu_r).
\end{equation}
\begin{remark}
	Notice that, due to \Cref{thm:concentration of measures},
	\begin{equation}
	\widetilde \Green_V ( \delta_{x} ) = \begin{dcases}
	\Green_V (\delta_x) & \textrm{ if } x \notin Z, \\
	0 & \textrm{ if } x \in Z .
	\end{dcases}
	\end{equation}
\end{remark}

\begin{theorem}
	The operator $\widetilde \Green_V : \cM (\Omega) \to L^1 (\Omega)$ is a linear continuous extension of $\Green_V$.
\end{theorem}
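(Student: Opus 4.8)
The plan is to verify in turn the four assertions packed into the statement: that $\widetilde \Green_V$ takes values in $L^1(\Omega)$, that it extends $\Green_V$, that it is linear, and that it is continuous. The first two are essentially definitional. For any $\mu \in \cM(\Omega)$ the reduced measure $\mu_r = \mu - \sum_{x\in Z}\mu(\{x\})\Dirac_x$ removes precisely the atomic mass sitting on $Z$ (this is the content of \Cref{thm:characterization of reduced measure}), so $|\mu_r|(Z)=0$ and $\mu_r$ is compatible with $V$; hence by the solvability criterion of the previous section the dual solution $\Green_V(\mu_r)\in L^1(\Omega)$ exists, and $\widetilde\Green_V(\mu):=\Green_V(\mu_r)$ indeed lands in $L^1(\Omega)$. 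If $\mu = f\,\dx$ with $f\in L^1(\Omega)$, then $\mu$ carries no atoms, so $\mu_r=\mu$ and $\widetilde\Green_V(\mu)=\Green_V(f)$; this gives the extension property.

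For linearity I would first observe that $\mu\mapsto\mu_r$ is linear, since $(\mu+\lambda\nu)(\{x\})=\mu(\{x\})+\lambda\nu(\{x\})$ forces $(\mu+\lambda\nu)_r=\mu_r+\lambda\nu_r$. It then remains to check that $\Green_V$ itself is linear on the class of $V$-compatible measures, and this follows from uniqueness of dual solutions: if $u_1=\Green_V(\mu_r)$ and $u_2=\Green_V(\nu_r)$ solve $u_i=\Green(\cdot-Vu_i)$, then by linearity of $\Green$ the combination $u_1+\lambda u_2$ solves the fixed-point equation with datum $\mu_r+\lambda\nu_r$, and uniqueness identifies it with $\Green_V(\mu_r+\lambda\nu_r)$.

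The only substantive point is continuity, for which I want an estimate $\|\widetilde\Green_V(\mu)\|_{L^1}\le C\|\mu\|_{\cM}$ with $C$ independent of $\mu$, and I would obtain it by passing to the limit in the regularized problems. For nonnegative $\mu$, the comparison $0\le\Green_{V_\ee}(\mu)\le\Green(\mu)$ (the measure-data version of the bound in \Cref{eq:existence fixed point}, which one reaches by $L^1$-approximation of $\mu$) together with the continuity of $\Green:\cM(\Omega)\to L^1(\Omega)$ from \Cref{thm:extension of G to measures} yields
\[
\|\Green_{V_\ee}(\mu)\|_{L^1(\Omega)}\le\|\Green(\mu)\|_{L^1(\Omega)}\le C\|\mu\|_{\cM(\Omega)},
\]
with $C$ independent of $\ee$; the same uniform bound also follows directly from the $V$-independent equi-integrability estimate of \Cref{thm:Dunford-Pettis for G V with V bounded} taken with $A=\Omega$. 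Since $\Green_{V_\ee}(\mu)\searrow\widetilde\Green_V(\mu)$ strongly in $L^1(\Omega)$ by \Cref{thm:existence of CSOLA}, the $L^1$-norm converges and the bound survives the limit.

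The one place needing care is that \Cref{thm:existence of CSOLA} is stated for $\mu\ge0$, so for signed data I would split $\mu=\mu^+-\mu^-$, apply the nonnegative bound to each part to get $\|\widetilde\Green_V(\mu^\pm)\|_{L^1}\le C\|\mu^\pm\|_{\cM}$, and then recombine via the linearity just established together with the triangle inequality, using $\|\mu^+\|_{\cM}+\|\mu^-\|_{\cM}=\|\mu\|_{\cM}$. I expect no genuine difficulty here: the whole argument is a limiting wrap-up of results already in hand, and the effort concentrates in securing the $\ee$-uniform $L^1$ bound before letting $\ee\to0$, with the signed decomposition being pure bookkeeping.
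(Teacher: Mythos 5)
Your argument is correct and follows essentially the same route as the paper: linearity is read off from the linearity of $\mu\mapsto\mu_r$ (equivalently, of $\alpha_\mu$), the uniform bound for $\mu\ge 0$ comes from the comparison $0\le\widetilde\Green_V(\mu)\le\Green(\mu)$ together with $\|\Green(\mu)\|_{L^1}\le C\|\mu\|_{\cM}$, and signed measures are handled by the Jordan decomposition. The only cosmetic difference is that you obtain the bound by passing to the limit in the regularized problems $\Green_{V_\ee}$, whereas the paper compares $\Green_V(\mu_r)\le\Green(\mu_r)\le\Green(\mu)$ directly; both rest on the same monotonicity.
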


\begin{proof}
	In the proof of \Cref{thm:existence of CSOLA} it is easy to see that $\alpha_{\mu}$ is linear in $\mu$.
	
	 For $\mu \ge 0$
	\begin{equation}
	\int_ \Omega |\widetilde \Green_V (\mu)| = \int_ \Omega \Green_V (\mu - \alpha_\mu \Dirac_0) \le \int_ \Omega \Green (\mu - \alpha_\mu \Dirac_0) \le \int_ \Omega \Green (\mu) \le C \| \mu \|_{\mathcal M (\Omega)}.
	\end{equation}
	For $\mu$ general we repeat for the positive and negative parts to deduce
	\begin{equation}
	\| \widetilde \Green_V (\mu) \|_{L^1 (\Omega)} \le C \| \mu \|_{\cM (\Omega)}.
	\end{equation}
	This completes the proof.
\end{proof}

\begin{corollary}
	If $\mu_n \rightharpoonup \mu$ weakly in $\cM (\Omega)$ we have
	\begin{equation}
		\widetilde \Green_V (\mu_n) \rightharpoonup \widetilde \Green_V (\mu) \textrm{ in }L^1 (\Omega).
	\end{equation}
\end{corollary}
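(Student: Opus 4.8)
The plan is to deduce the statement from the soft functional-analytic fact that a bounded linear operator between Banach spaces is automatically continuous for the respective weak topologies, combined with the identification $(L^1(\Omega))^* = L^\infty(\Omega)$. The preceding theorem already supplies exactly the two ingredients needed: $\widetilde\Green_V$ is linear, and it satisfies the bound $\|\widetilde\Green_V(\mu)\|_{L^1(\Omega)} \le C\|\mu\|_{\cM(\Omega)}$. So I do not expect to re-enter the concentration/reduced-measure machinery here; everything is driven by duality.

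Concretely, I would fix an arbitrary $g \in L^\infty(\Omega)$ and introduce the functional
\begin{equation}
	\Lambda_g(\mu) = \int_\Omega \widetilde\Green_V(\mu)\, g \,\dx .
\end{equation}
First I would verify that $\Lambda_g \in \cM(\Omega)^*$: linearity of $\mu \mapsto \Lambda_g(\mu)$ is inherited from the linearity of $\widetilde\Green_V$, while the estimate
\begin{equation}
	|\Lambda_g(\mu)| \le \|g\|_{L^\infty(\Omega)}\,\|\widetilde\Green_V(\mu)\|_{L^1(\Omega)} \le C\,\|g\|_{L^\infty(\Omega)}\,\|\mu\|_{\cM(\Omega)}
\end{equation}
gives boundedness. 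Then, by the very definition of weak convergence in the Banach space $\cM(\Omega)$, the hypothesis $\mu_n \rightharpoonup \mu$ forces $\Lambda_g(\mu_n) \to \Lambda_g(\mu)$, that is $\int_\Omega \widetilde\Green_V(\mu_n)\,g \to \int_\Omega \widetilde\Green_V(\mu)\,g$. Since $g \in L^\infty(\Omega) = (L^1(\Omega))^*$ was arbitrary, this is precisely the assertion $\widetilde\Green_V(\mu_n) \rightharpoonup \widetilde\Green_V(\mu)$ weakly in $L^1(\Omega)$.

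The point that really deserves attention is not the argument above, which is routine once boundedness and linearity are granted, but rather why the hypothesis must be genuine weak convergence and cannot be weakened to weak-$\star$ convergence, in sharp contrast with \Cref{prop:G continuous weak star measure to weak L1} for the potential-free operator $\Green$. The obstruction is exactly the concentration phenomenon of \Cref{thm:concentration of measures}: the reduced-measure map $\mu \mapsto \mu_r$ is \emph{not} weak-$\star$ continuous, since an absolutely continuous sequence such as $\one_{B_{1/n}(x)}/|B_{1/n}(x)|$ (whose reduced measure equals itself) converges weak-$\star$ to $\Dirac_x$, whereas its reduced measure is $0$ when $x \in Z$. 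I would note, however, that such concentrating sequences are excluded by the weak hypothesis: if they converged weakly in $\cM(\Omega)$ the limit would also be their weak-$\star$ limit $\Dirac_x$, which is impossible because weak limits of the $\mu_n$ must lie in the weakly closed (being norm-closed) subspace $L^1(\Omega)$, while $\Dirac_x \notin L^1(\Omega)$. This is precisely the reason the soft duality argument goes through for weak convergence and no weak-$\star$ analogue of the corollary can hold.
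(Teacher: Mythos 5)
Your first two paragraphs are correct and are essentially the paper's (implicit) argument: the corollary is stated without proof as an immediate consequence of the preceding theorem, because a bounded linear operator between Banach spaces is weak-to-weak continuous, and your functionals $\Lambda_g$ for $g \in L^\infty(\Omega) = (L^1(\Omega))^*$ are exactly that standard duality argument written out. As a proof of the stated corollary, this is complete.

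The closing paragraph, however, contains a genuine error: the weak-$\star$ analogue of the corollary \emph{does} hold, and it is precisely \Cref{prop:GV continuous weak star M to weak L1}, stated and proved immediately after this corollary. What you correctly observe is that the reduced-measure map $\mu \mapsto \mu_r$ is not weak-$\star$ continuous as a map into $\cM(\Omega)$: the mollified Diracs $f_n = \one_{B_{1/n}(x)}/|B_{1/n}(x)|$ satisfy $(f_n\,\dx)_r = f_n\,\dx$ yet converge weak-$\star$ to $\Dirac_x$, whose reduced measure is $0$ for $x \in Z$. But this discontinuity does not transfer to $\widetilde\Green_V = \Green_V \circ (\,\cdot\,)_r$. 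Indeed, by self-adjointness and \Cref{thm:Green V L inf to continuous}, for any $\psi \in L^\infty(\Omega)$ one has $\int_\Omega \Green_V(f_n)\,\psi = |B_{1/n}(x)|^{-1}\int_{B_{1/n}(x)} \Green_V(\psi) \to \Green_V(\psi)(x)$, which vanishes for $x \in Z$ by \Cref{thm:Z V decomposed}; hence $\Green_V(f_n) \rightharpoonup 0 = \widetilde\Green_V(\Dirac_x)$ weakly in $L^1(\Omega)$ after all. The concentration phenomenon of \Cref{thm:concentration of measures} is exactly the mechanism by which the potential absorbs the mass that would otherwise produce $\Green(\Dirac_x)$ in the limit, so the weak-$\star$ discontinuity of $\mu \mapsto \mu_r$ is compensated, not reproduced, at the level of $\widetilde\Green_V$. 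This mistake does not invalidate your proof of the corollary as stated, but the assertion that no weak-$\star$ version can hold should be removed, as it contradicts \Cref{prop:GV continuous weak star M to weak L1}.
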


\begin{proposition}
	\label{prop:GV continuous weak star M to weak L1}
	If $\mu_n \rightharpoonup \mu$ weak-$\star$ in $\cM (\Omega)$ we have
	\begin{equation}
	\widetilde \Green_V (\mu_n) \rightharpoonup \widetilde \Green_V (\mu) \textrm{ in }L^1 (\Omega).
	\end{equation}
\end{proposition}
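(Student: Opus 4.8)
The plan is to reduce the weak-$\star$ statement to the narrow (weak) convergence case already established in the preceding corollary, since the only genuinely new feature of weak-$\star$ convergence is that mass may escape towards $\partial\Omega$. First I would note that weak-$\star$ convergence forces $\sup_n\|\mu_n\|_{\cM(\Omega)}<\infty$ by the Banach–Steinhaus theorem, exactly as in the proof of \Cref{prop:G continuous weak star measure to weak L1}. The strategy is then to split each $\mu_n$ into an interior piece, which converges \emph{narrowly} and is handled directly by the corollary, and a boundary layer piece, whose image under $\widetilde\Green_V$ I will show to be uniformly small in $L^1(\Omega)$ because $\Green(\one_\Omega)$ vanishes on $\partial\Omega$.

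Concretely, I would fix a cutoff $\eta\in\cC_c(\Omega)$ with $0\le\eta\le1$ and write $\mu_n=\eta\mu_n+(1-\eta)\mu_n$, and likewise for $\mu$, using linearity of $\widetilde\Green_V$. For the interior piece I would check that $\eta\mu_n\rightharpoonup\eta\mu$ narrowly: for every $\phi\in\cC_b(\Omega)$ one has $\phi\eta\in\cC_c(\Omega)\subset\cC_0(\Omega)$, so the weak-$\star$ hypothesis gives $\int_\Omega\phi\,\mathrm d(\eta\mu_n)=\int_\Omega\phi\eta\,\mathrm d\mu_n\to\int_\Omega\phi\eta\,\mathrm d\mu=\int_\Omega\phi\,\mathrm d(\eta\mu)$; since the measures $\eta\mu_n$ all live on the fixed compact set $\supp\eta$, this convergence against all $\phi\in\cC_b(\Omega)$ is narrow convergence. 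The preceding corollary then yields $\widetilde\Green_V(\eta\mu_n)\rightharpoonup\widetilde\Green_V(\eta\mu)$ weakly in $L^1(\Omega)$.

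For the boundary piece I would invoke the $L^1$ bound $\|\widetilde\Green_V(\nu)\|_{L^1}\le\|\Green(|\nu|)\|_{L^1}$ (established in the proof that $\widetilde\Green_V$ is a continuous extension) together with self-adjointness, which for the nonnegative measure $|(1-\eta)\mu_n|=(1-\eta)|\mu_n|$ gives
\begin{equation*}
\|\widetilde\Green_V((1-\eta)\mu_n)\|_{L^1(\Omega)}\le\int_\Omega\Green(\one_\Omega)\,(1-\eta)\,\mathrm d|\mu_n|\le\Big(\sup_{\,\supp(1-\eta)}\Green(\one_\Omega)\Big)\,\|\mu_n\|_{\cM(\Omega)},
\end{equation*}
and the same bound for $\mu$. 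Since $\Green(\one_\Omega)\in\cC(\overline\Omega)$ by \eqref{eq:regularization} and vanishes on $\partial\Omega$ by the Dirichlet condition, given $\ee>0$ I can choose $\eta$ so that $\Green(\one_\Omega)\le\ee$ on $\supp(1-\eta)$, making both boundary contributions at most $C\ee$ uniformly in $n$. Testing against an arbitrary $\phi\in L^\infty(\Omega)$ and combining the two pieces then gives $\limsup_n\big|\int_\Omega(\widetilde\Green_V(\mu_n)-\widetilde\Green_V(\mu))\phi\big|\le 2C\ee\,\|\phi\|_{L^\infty}$, and letting $\ee\to0$ yields the claim for the whole sequence, with no subsequence extraction needed. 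The one point requiring care—and the real content beyond the corollary—is precisely this uniform smallness of $\widetilde\Green_V$ on the boundary layer, i.e. the tightness estimate furnished by the continuity of $\Green(\one_\Omega)$ up to $\partial\Omega$.
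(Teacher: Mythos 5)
Your decomposition into an interior piece and a boundary layer is a genuinely different strategy from the paper's, and the boundary-layer half is fine: the estimate $\|\widetilde\Green_V(\nu)\|_{L^1}\le\int_\Omega\Green(\one_\Omega)\,\mathrm d|\nu|$ together with the vanishing of $\Green(\one_\Omega)$ at $\partial\Omega$ does make that contribution uniformly small in $n$. The gap is in the interior piece, which is where the entire difficulty of the proposition actually sits. You show that $\eta\mu_n\rightharpoonup\eta\mu$ \emph{narrowly} and then invoke the preceding corollary, but that corollary concerns weak convergence in the Banach space $\cM(\Omega)$ (duality with $\cM(\Omega)^*$), which is exactly why it is stated without proof: a bounded linear operator is automatically weak--weak continuous. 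Narrow convergence is strictly weaker than Banach-weak convergence of measures: for $x_n\to x$ with $x_n\ne x$ one has $\delta_{x_n}\to\delta_x$ narrowly, yet $\nu\mapsto\nu(\{x\})$ is a bounded linear functional on $\cM(\Omega)$ and $\delta_{x_n}(\{x\})=0\not\to 1=\delta_x(\{x\})$. So the corollary does not apply to $\eta\mu_n$, and this is not a removable technicality: sequences such as $\delta_{x_n}$ with $x_n\to x$ live in a fixed compact subset of $\Omega$, so your cutoff does nothing to them, and they are precisely the sequences to which the proposition is later applied (e.g.\ in the proof of \Cref{thm:Green V L inf to continuous}). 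Whether $\widetilde\Green_V(\delta_{x_n})\rightharpoonup\widetilde\Green_V(\delta_x)$ is exactly the nontrivial content your argument would still need to supply.

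The paper closes this by a comparison rather than a decomposition: reduce to $\mu=0$ by linearity, observe that for $\mu_n\ge 0$ one has $0\le\widetilde\Green_V(\mu_n)\le\Green(\mu_n)$ pointwise (from the construction as a decreasing limit of $\Green_{V_\ee}(\mu_n)$), and squeeze $\langle\widetilde\Green_V(\mu_n),\psi_\pm\rangle$ between $0$ and $\langle\Green(\mu_n),\psi_\pm\rangle$, which tends to $0$ by the weak-$\star$ continuity of the potential-free operator already established in \Cref{prop:G continuous weak star measure to weak L1}; signed $\mu_n$ are then treated by splitting into positive and negative parts. If you replace the appeal to the corollary by this domination argument, your tightness estimate becomes superfluous, since the bound by $\Green(\mu_n)$ already controls the behaviour up to the boundary.
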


\begin{proof}
	Due to linearity we assume $\mu = 0$.
	
	\textbf{Step 1.} Assume $\mu_n \ge 0$ and $\mu = 0$. Let $\psi \in L^\infty (\Omega)$.
	\begin{align}
		0 \le \langle \widetilde \Green_V (\mu_{n}) , \psi_+ \rangle & \le \langle \Green (\mu_n) , \psi_+ \rangle \to 0 , \\
		0 \le \langle \widetilde \Green_V (\mu_n) ,  \psi_- \rangle & \le  \langle \Green (\mu_n) , \psi_- \rangle \to 0.
	\end{align}
	Thus
	\begin{equation}
		\langle \widetilde \Green_V (\mu_n) , \psi \rangle \to 0.
	\end{equation}	
	$\widetilde \Green_V (\mu_n) \rightharpoonup 0$ in $L^1 (\Omega)$.
	
	\textbf{Step 2.} Assume $\mu_n$ can change sign. The sequence $(\mu_n)_+$ and $(\mu_-)$ are bounded. Take a convergent subsequence of $(\mu_n)_+$ and, out of that subsequence, a convergent subsequence of $(\mu_n)_-$. Hence, there exist $\lambda_1, \lambda_2$ such that
	\begin{equation}
		(\mu_n)_+ \rightharpoonup \lambda_1, \qquad
		(\mu_n)_- \rightharpoonup \lambda_2
	\end{equation}
	By uniqueness of the limit $\mu = \lambda_1 - \lambda_2$. We apply the first part of the proof to deduce that the result.
\end{proof}

\subsection{Regularization $\Green_V : L^\infty (\Omega) \to \mathcal C(\overline \Omega)$ and kernel representation}

\begin{theorem}
	\label{thm:Green V L inf to continuous}
	$\Green_V : L^\infty (\Omega) \to \mathcal C(\overline \Omega)$ is continuous. Furthermore
	\begin{equation}
		{\Green_V (f)} (x) = \int_ \Omega \G_V (x,y) f(y) \quad \textrm{ where } \G_V (x,y) = \widetilde \Green_V (\delta_x) (y).
	\end{equation}
\end{theorem}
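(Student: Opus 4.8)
The plan is to prove both assertions at once by producing the natural continuous candidate
\[
Tf(x) \defeq \int_\Omega \G_V(x,y)\, f(y)\,\dx , \qquad \G_V(x,\cdot) = \widetilde\Green_V(\delta_x)\in L^1(\Omega),
\]
showing directly that $Tf \in \cC(\overline\Omega)$ with $\|Tf\|_{\cC}\le C\|f\|_{L^\infty}$, and then identifying $Tf=\Green_V(f)$ almost everywhere. The two engines are the \emph{self-adjointness} of $\Green_V$ on $L^\infty(\Omega)$ and the weak-$\star$ continuity of $\widetilde\Green_V$ from \Cref{prop:GV continuous weak star M to weak L1}.

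First I would record that $\Green_V$ is self-adjoint on $L^\infty(\Omega)$. For $f,g\in L^\infty(\Omega)$ set $u_\ee=\Green_{V_\ee}(f)$, $v_\ee=\Green_{V_\ee}(g)$, so $u_\ee=\Green(f-V_\ee u_\ee)$ and $\Green(g)=v_\ee+\Green(V_\ee v_\ee)$. Using \eqref{eq:Green self adjoint} twice,
\[
\int_\Omega u_\ee\, g=\int_\Omega (f-V_\ee u_\ee)\Green(g)=\int_\Omega (f-V_\ee u_\ee)v_\ee+\int_\Omega u_\ee V_\ee v_\ee=\int_\Omega f\, v_\ee .
\]
Since $f,g\in L^\infty\subset L^1$ and $f\,\dx,\,g\,\dx$ carry no atoms, \Cref{cor:CSOLA measures away from S} gives $\Green_{V_\ee}(f)\to\Green_V(f)$ and $\Green_{V_\ee}(g)\to\Green_V(g)$ strongly in $L^1(\Omega)$; testing the strong limit against the bounded functions $g$, $f$ respectively yields $\int_\Omega\Green_V(f)\,g=\int_\Omega f\,\Green_V(g)$.

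Next I would establish $T:L^\infty(\Omega)\to\cC(\overline\Omega)$. For an interior point and $x_n\to x_0\in\Omega$ one has $\delta_{x_n}\rightharpoonup\delta_{x_0}$ weak-$\star$, so $\G_V(x_n,\cdot)=\widetilde\Green_V(\delta_{x_n})\rightharpoonup\widetilde\Green_V(\delta_{x_0})=\G_V(x_0,\cdot)$ weakly in $L^1$ by \Cref{prop:GV continuous weak star M to weak L1}; pairing with $f\in L^\infty$ gives $Tf(x_n)\to Tf(x_0)$. Positivity of the approximations and $V_\ee\ge 0$ (via \Cref{eq:existence fixed point}) force $0\le\G_V(x,\cdot)\le\G(x,\cdot)$, hence $\|\G_V(x,\cdot)\|_{L^1}\le\Green(\one)(x)$; by \eqref{eq:regularization} and the boundary factor in \eqref{eq:estimate for G} this tends to $0$ as $x\to\partial\Omega$, so $Tf$ extends continuously to $\overline\Omega$ by zero on the boundary, and the same bound gives $\|Tf\|_{\cC}\le\|\Green(\one)\|_{\cC}\|f\|_{L^\infty}$.

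Finally comes the identification. Fix $f\in L^\infty$ and a Lebesgue point $x$ of $\Green_V(f)\in L^1(\Omega)$, and set $\phi_k=\one_{B_{1/k}(x)}/|B_{1/k}(x)|$. Then $\phi_k\,\dx\rightharpoonup\delta_x$ weak-$\star$, whence $\Green_V(\phi_k)=\widetilde\Green_V(\phi_k\,\dx)\rightharpoonup\G_V(x,\cdot)$ weakly in $L^1$, and self-adjointness gives
\[
\frac{1}{|B_{1/k}(x)|}\int_{B_{1/k}(x)}\Green_V(f)=\int_\Omega\phi_k\,\Green_V(f)=\int_\Omega\Green_V(\phi_k)\,f\xrightarrow{k\to\infty}\int_\Omega\G_V(x,y)f(y)\,\dx=Tf(x).
\]
The left side converges to $\Green_V(f)(x)$, so $\Green_V(f)=Tf$ a.e.; as $Tf\in\cC(\overline\Omega)$ it is the continuous representative, yielding both the kernel formula and the regularization. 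The main obstacle I anticipate is twofold: making the self-adjointness of $\Green_{V_\ee}$ survive the limit $\ee\to0$ — which relies on the \emph{strong} $L^1$ convergence of the approximants, available precisely because the data are functions with no atomic part — and upgrading continuity from $\Omega$ to $\overline\Omega$, for which the uniform decay $\|\G_V(x,\cdot)\|_{L^1}\le\Green(\one)(x)$ near $\partial\Omega$ is the decisive estimate.
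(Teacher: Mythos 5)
Your proposal is correct and follows essentially the same route as the paper: self-adjointness of $\Green_V$ on $L^\infty(\Omega)$, the weak-$\star$-to-weak-$L^1$ continuity of $\widetilde\Green_V$ from \Cref{prop:GV continuous weak star M to weak L1} applied to $\delta_{x_n}\rightharpoonup\delta_x$, and identification of the kernel representation by testing against normalized indicators of shrinking balls at Lebesgue points. You additionally supply two steps the paper leaves implicit — the passage of self-adjointness from $\Green_{V_\ee}$ to the limit $\Green_V$, and the decay $\|\G_V(x,\cdot)\|_{L^1}\le\Green(\one_\Omega)(x)\to 0$ near $\partial\Omega$ that justifies continuity up to the closure — which strengthens rather than changes the argument.
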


\begin{proof}
	For $f, \psi \in L^\infty (\Omega)$ we have
	\begin{equation}
	\int_ \Omega \Green_V (f) \psi = \int_ \Omega f \Green_V (\psi) = \int_ \Omega f \widetilde \Green_V (\psi)
	\end{equation}
	Let $\psi_\ee = \frac{1}{|B_\ee(x)|} \one_{B_\ee (x)} \to \Dirac_x$ for $x \in \Omega$. Then $\widetilde \Green_V (\psi) \to \widetilde \Green_V (\Dirac_x)$. Since $\Green_V (f)$ In particular
	\begin{equation}
	\widehat{\Green_V (f)} (x) = \int_ \Omega f \widetilde \Green_V (\Dirac_x).
	\end{equation}
	
	Let $x_n \to x$ in $\mathbb R^n$. We have that
	\begin{equation}
		\Dirac_{x_n} \rightharpoonup \Dirac_x \textrm{ weak}-{\star}-\cM (\Omega)
	\end{equation}
	Due to \Cref{prop:GV continuous weak star M to weak L1} we have
	\begin{equation}
		\widehat{\Green_V (f)} (x_n) = \int_ \Omega f \widetilde \Green_V (\Dirac_{x_n}) \to \int_ \Omega f \widetilde \Green_V (\Dirac_x) = \widehat{\Green_V (f)} (x).
	\end{equation}
	Hence $\widehat{\Green_V (f)}$ is continuous on $\bar \Omega$. We can express $\Green_V (f)$ as its precise representation.
\end{proof}

%
%
%

\subsection{The kernel $\G_V$ as limit of $\G_{V_\ee}$}
In this clear that $\G_{V_\ee} (x,y)$ is a pointwise non-increasing sequence. Thus there is a limit
\begin{equation}
\G_{V_\ee} \searrow \underline {\G_{V}} \qquad \textrm{ in } L^2 (\Omega \times \Omega).
\end{equation}
what we have proven in the previous section can be understood as follows:
\begin{equation}
\underline {\G_{V}} (0,y) = 0, \qquad \textrm { if } \widetilde G_V (\Dirac_0) =  0.
\end{equation}

But we know that $\Green_{V_\ee} (f) \to \Green_V (f)$ for $f \in L^1 (\Omega)$, therefore
\begin{equation}
\underline \G_V (x,y) = \G_V (x,y).
\end{equation}
Furthermore, since symmetry holds for $\G_{V_\ee}$, we give yet a further reason for the symmetry
\begin{equation}
	\G_V (x,y) = \G_V (y,x).
\end{equation}

\section{Characterization of $Z$. Maximum principle.}\label{sec.Z}

We first recall the results of Ponce and Orsina \cite{Orsina2018} about set $Z$ and
failure of the strong maximum principle for bounded data in the case $L=-\Delta$ and adapt it to our fractional setting. We then proceed with the actual characterization of $Z$
in our setting.

\subsection{Set of universal zeros. Failure of the strong maximum principle}
\label{sec:universal zero-set}

Ponce and Orsina  formalized the notion of set of universal zeros (or universal zero-set in their notation):
\begin{equation}
	Z_0 = \{ x \in \Omega : {\Green_V (f)} (x) = 0 \quad \forall f \in L^\infty (\Omega)  \}
\end{equation}
in the context $s = 1$. As noted in their paper this is a failure of the strong maximum principle.
For $\Ls = -\Delta$ in  \cite{Orsina2018},  the universal zero-set is characterized as
\begin{equation}
	\label{eq:universal zero-set terms of delta}
	Z_0 = Z.
\end{equation}
Furthermore, the authors show that $\Green_V(\mu)$ exists for $L=-\Delta$ if and only if $|\mu| (Z) = 0$.
This leads them to indicate that in $Z \ne \emptyset$ then \emph{the Green kernel does not exist}.  However, the authors do indicate that, when $|\mu|(Z) = 0$ then (in our notation) the unique solution is written
\begin{equation}
	\Green_V (\mu) (x) = \int_ \Omega \Green_V (\Dirac_x) (y) \mathrm{d} \mu (y).
\end{equation}

In order to connect these assertions with the results in \Cref{sec:representation of G_V for general V}, in this paragraph we prove the following:
\begin{theorem}

	\label{thm:Z V decomposed} Assume \eqref{eq:V singular 0} and \eqref{eq:G is symmetric}--\eqref{eq:regularization}. It holds that
	\begin{equation}
		\label{eq:kernel G V as Green V of delta x}
		\widetilde \Green_V ( \delta_x ) (y) = \G_V (y,x)
	\end{equation}
	Then, the following are equivalent
	\begin{enumerate}[\rm i)]
		\item \label{it:zero of Green V of delta x}
			$\widetilde \Green_V (\Dirac_x) = 0$ (i.e. $x \in Z$)
		
		\item \label{it:zero of GV of x}
			$\G_V (x, \cdot) = 0$ a.e. in $\Omega$.
		
		\item \label{it:no max principle}
			$\Green_V (f) (x) = 0$ for all $f \in L^\infty (\Omega)$.
			
		\item \label{it: zero of Green V of 1}
			$\Green_V (\one_\Omega) (x) = 0$.
	\end{enumerate}
\end{theorem}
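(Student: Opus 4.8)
The plan is to first pin down the kernel identity \eqref{eq:kernel G V as Green V of delta x} and then run a short cycle of implications through (i)--(iv). For the identity I would simply combine the definition of the kernel recorded in \Cref{thm:Green V L inf to continuous}, namely $\G_V(x,y) = \widetilde\Green_V(\delta_x)(y)$, with the symmetry $\G_V(x,y) = \G_V(y,x)$ already obtained by passing to the decreasing limit in the symmetric kernels $\G_{V_\ee}$. This gives $\widetilde\Green_V(\delta_x)(y) = \G_V(x,y) = \G_V(y,x)$ at once, which is exactly \eqref{eq:kernel G V as Green V of delta x}.

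For the equivalences I would take the cheap links first. The equivalence of (i) and (ii) is essentially a restatement: by \eqref{eq:kernel G V as Green V of delta x} the function $\widetilde\Green_V(\delta_x)$ coincides, as an $L^1$ element, with the slice $\G_V(\cdot,x) = \G_V(x,\cdot)$, so the conditions $\widetilde\Green_V(\delta_x) = 0$ and $\G_V(x,\cdot) = 0$ a.e.\ say the same thing. For (ii) $\implies$ (iii) I would invoke the representation $\Green_V(f)(x) = \int_\Omega \G_V(x,y) f(y)\,dy$, valid for every $f \in L^\infty(\Omega)$ and holding pointwise in $x$ because $\Green_V(f) \in \cC(\overline\Omega)$: if the slice vanishes a.e.\ then this integral is $0$ for every $f$. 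The step (iii) $\implies$ (iv) is trivial, specializing to $f = \one_\Omega$.

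The only implication carrying real content is (iv) $\implies$ (ii), and it rests on the sign of the limiting kernel. Each $\G_{V_\ee}$ is nonnegative, since the bounded-potential operators $\Green_{V_\ee}$ are positivity preserving, and $\G_V$ is their decreasing limit, so $\G_V \ge 0$. Condition (iv) then reads $\int_\Omega \G_V(x,y)\,dy = 0$ with a nonnegative integrand, which forces $\G_V(x,\cdot) = 0$ a.e., i.e.\ (ii). Closing the loop (ii) $\implies$ (iii) $\implies$ (iv) $\implies$ (ii), together with (i) $\iff$ (ii), yields the full chain of equivalences.

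I expect the main, if mild, difficulty to be purely bookkeeping: conditions (iii) and (iv) are pointwise-in-$x$ statements, meaningful precisely because $\Green_V(f)$ is a genuine continuous function, whereas (ii) is an almost-everywhere statement about the slice $\G_V(x,\cdot)$ in the $y$ variable. The one structural ingredient that must be stated with care is the nonnegativity $\G_V \ge 0$, as it is exactly what upgrades the single scalar identity (iv) back to the vanishing of the whole slice.
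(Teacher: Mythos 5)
Your proposal is correct and follows essentially the same route as the paper: the kernel identity via the representation in \Cref{thm:Green V L inf to continuous} together with the symmetry of $\G_V$ inherited from the $\G_{V_\ee}$, then the cycle i) $\iff$ ii) $\implies$ iii) $\implies$ iv) $\implies$ ii), with nonnegativity of $\G_V$ closing the loop (the paper encodes this in the step $\int_\Omega \G_V(x,y)\,dy = \int_\Omega |\G_V(x,y)|\,dy$, which you make explicit).
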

\begin{proof}
	
	It is easy to see that
	\begin{equation}
		\widetilde \Green_V ( \delta_x ) (y) = \int_ \Omega \G_V (y,z) d\delta_x (z) = \G_V (y,x).
	\end{equation}
	
	We prove that: \ref{it:zero of Green V of delta x}  $\iff$ \ref{it:zero of GV of x} $\implies$ \ref{it:no max principle} $\implies$ \ref{it: zero of Green V of 1} $\implies$ \ref{it:zero of GV of x}.
	
	The equivalence between \cref{it:zero of Green V of delta x} and \cref{it:zero of GV of x} is immediate from \eqref{eq:kernel G V as Green V of delta x}.
	
	Assume that \cref{it:zero of GV of x}. Then, for $f \in L^\infty (\Omega)$ we have that
	\begin{equation}
		\Green_V (f) (x) = \int_ \Omega \G_V (x,y) f(y) dy = \int_ \Omega 0 f(y) dy = 0.
	\end{equation}
	This is precisely \cref{it:no max principle}.
	
	Since the function $\one_\Omega \in L^\infty (\Omega)$ clearly \cref{it:no max principle} implies \cref{it: zero of Green V of 1}.
	
	Assume \cref{it: zero of Green V of 1}. Then
	\begin{equation}
		0 = \Green_V (\one_ \Omega) (x) = \int_ \Omega \G_V (x,y) dy =  \int_ \Omega |\G_V (x,y)| dy
	\end{equation}
	Hence, \cref{it:zero of GV of x} holds. 	
\end{proof}

\subsection{Necessary and sufficient condition on $V$ so that $x \in Z$}

We now state and prove the final result that characterizes nonexistence in terms of the integrability of $V$.

\begin{theorem}	\label{thm:Z depending on V}
	Assume \eqref{eq:V singular 0}. Then
	\begin{equation}
		x \notin Z \iff V \Green( \delta_x) \in L^1 (B_\rho (x)) \textrm{ for some } \rho > 0.
	\end{equation}
	In particular, $Z \subset S$.
\end{theorem}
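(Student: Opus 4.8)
The plan is to reduce everything to the concentration dichotomy already proved for a point mass, and then to read off the integrability condition from a local lower bound near $x$. Writing $u_\ee=\Green_{V_\ee}(\Dirac_x)$ and $\alpha=\alpha^x_{\Dirac_x}$, \Cref{thm:concentration of measures} gives the alternative $\alpha\in\{0,1\}$, with $x\notin Z$ exactly when $\alpha=0$ (so $u_\ee\searrow\ulim$, $\ulim=\Green(\Dirac_x-V\ulim)$ and $V\ulim\in L^1(\Omega)$), and $x\in Z$ exactly when $\alpha=1$ (so $u_\ee\searrow 0$). First I would dispose of the case $x\notin S$: there $V$ is bounded on some $B_\rho(x)$, so $V\Green(\Dirac_x)\in L^1(B_\rho(x))$ automatically, and \Cref{cor:CSOLA measures away from S} gives $x\notin Z$; both sides hold, and in passing this yields $Z\subset S$. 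Hence I may assume $x\in S$ and fix $\rho_0>0$ with $\overline{B_{\rho_0}(x)}\cap S=\{x\}$; then $V\in L^\infty(B_{\rho_0}(x)\setminus B_r(x))$ for every $r\in(0,\rho_0)$ and, by \eqref{eq:estimate for G}, $\Green(\Dirac_x)(y)=\G(x,y)\asymp|x-y|^{2s-n}$ on $B_{\rho_0}(x)$.

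For the implication $\Leftarrow$ I would argue by dominated convergence. Since $V_\ee\le V$ and $u_\ee\le\Green(\Dirac_x)$ (monotonicity of $\Green_{V_\ee}$ in the potential), one has the uniform bound $0\le V_\ee u_\ee\le V\Green(\Dirac_x)$ on $B_\rho(x)$, while $V_\ee u_\ee\to V\ulim$ a.e. If $V\Green(\Dirac_x)\in L^1(B_\rho(x))$, the Dominated Convergence Theorem yields $V_\ee u_\ee\to V\ulim$ in $L^1(B_\rho(x))$, so the weak-$\star$ limit of $V_\ee u_\ee$ carries no mass at $x$, i.e. $\alpha=0$, and therefore $x\notin Z$.

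For the reverse implication I would argue by contraposition: assume $\int_{B_r(x)}V\Green(\Dirac_x)=+\infty$ for every $r>0$ (equivalent to the stated negation, since the contribution of $B_{\rho_0}\setminus B_r$ is finite by local boundedness of $V$), and suppose for contradiction that $x\notin Z$. Then $\ulim=\Green(\Dirac_x-V\ulim)$ with $V\ulim\in L^1(\Omega)$, and, $\alpha=0$ meaning no mass escapes, the a.e. convergence $V_\ee u_\ee\to V\ulim$ upgrades to strong $L^1$ convergence by Scheffé's lemma. The goal is to contradict $\int_{B_r(x)}V\ulim<\infty$, and for this it suffices to establish the local lower bound
\begin{equation}
\label{eq:lowbd}
\ulim\ge c\,\Green(\Dirac_x)\qquad\text{on }B_{r_0}(x)
\end{equation}
for some $c>0$, $r_0\in(0,\rho_0)$: then $\int_{B_{r_0}}V\ulim\ge c\int_{B_{r_0}}V\Green(\Dirac_x)=+\infty$, contradicting $V\ulim\in L^1$ and forcing $\alpha=1$, i.e. $x\in Z$.

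The bound \eqref{eq:lowbd} is the heart of the matter and the step I expect to be hardest, precisely because it cannot hold unconditionally (when $x\in Z$ one has $\ulim=0$): it must exploit that a genuine solution exists. I would extract it from $\ulim=\Green(\Dirac_x)-\Green(V\ulim)$ by splitting $V\ulim=(V\ulim)\one_{B_r(x)}+(V\ulim)\one_{B_r(x)^c}$. The far part is bounded by a constant $C_r$ on $B_{r/2}(x)$ (as $\G$ is bounded off the diagonal), so $\ulim\ge\Green(\Dirac_x)-\Green((V\ulim)\one_{B_r})-C_r$ there; it then remains to dominate the near potential term by a fixed fraction of $\Green(\Dirac_x)$. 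The decisive input is the scaling identity of \Cref{thm:local integral of G mu near support}, which gives $\rho^{-2s}\int_{B_\rho(x)}\Green(V\ulim)\to 0$ since $V\ulim$ has no atom at $x$, so that $\Green(V\ulim)$ is negligible at the natural scale $\rho^{2s}$ of $\Green(\Dirac_x)$ and the pole of $\Green(\Dirac_x)$ survives in $\ulim$; the relation $\rho^{-2s}\int_{B_\rho(x)}\ulim\to c_x(1-\alpha)$ from \Cref{it:scaling at 0} then pins $\alpha$ down. Equivalently, one may phrase the obstruction as the non-solvability in $L^1_+$ of the self-consistent equation $g+V\Green(g)=V\Green(\Dirac_x)$ when its datum fails to be locally integrable. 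Once \eqref{eq:lowbd} is in place the theorem follows.
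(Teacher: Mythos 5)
Your ``$\Leftarrow$'' direction is essentially the paper's own argument: the domination $0\le V_\ee u_\ee\le V\,\Green(\Dirac_x)$ together with dominated convergence shows the limit measure carries no atom at $x$, so $\alpha=0$ and $x\notin Z$; the preliminary reduction to $x\in S$ and the conclusion $Z\subset S$ are also fine.

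The gap is in the ``$\Rightarrow$'' direction, at the pointwise lower bound $\ulim\ge c\,\Green(\Dirac_x)$ on $B_{r_0}(x)$, which you correctly flag as the heart of the matter but do not actually establish. The scaling input you invoke, namely $\rho^{-2s}\int_{B_\rho(x)}\Green(V\ulim)\,\dx\to0$ from \Cref{thm:local integral of G mu near support} (valid since $V\ulim$ has no atom at $x$), is an \emph{averaged} statement: it shows $\Green(V\ulim)$ is small relative to $\Green(\Dirac_x)$ in integral mean over small balls, and it does pin down $\alpha$ via the identity of \Cref{thm:local integral of G mu near support}. But it does not yield the \emph{pointwise} domination $\Green(V\ulim)(y)\le\tfrac12\Green(\Dirac_x)(y)$ near $x$ that your splitting of $V\ulim$ into near and far parts requires: if $V\ulim$ contains bumps $c_k\,\one_{B_{r_k}(y_k)}/|B_{r_k}(y_k)|$ with $y_k\to x$ and $r_k\ll|y_k-x|$, then $\Green(V\ulim)(y_k)\gtrsim c_k r_k^{2s-n}$ can exceed $|y_k-x|^{2s-n}$ on every neighbourhood of $x$ while the averaged quantity still tends to $0$. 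Without a pointwise lower bound on $\ulim$ you cannot multiply by $V$ and conclude $\int_{B_{r_0}}V\ulim=+\infty$. The paper avoids this difficulty altogether by a duality trick: it tests the equation satisfied by $\Green_V(\one_\Omega)$ against $\psi_k=\one_{B_{1/k}(x)}/|B_{1/k}(x)|$ and passes to the limit with Fatou and \Cref{lem:ae aprox of G delta 0} to obtain $\int_\Omega V\,\Green(\Dirac_x)\,\Green_V(\one_\Omega)\le\int_\Omega\Green(\Dirac_x)<+\infty$; since $x\notin Z$ forces $\Green_V(\one_\Omega)(x)>0$ by \Cref{thm:Z V decomposed}, and $\Green_V(\one_\Omega)$ is continuous by \Cref{thm:Green V L inf to continuous}, it is bounded below by some $c>0$ on a ball $B_\rho(x)$, whence $\int_{B_\rho(x)}V\,\Green(\Dirac_x)<+\infty$ immediately. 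In other words, the needed lower bound is taken on the \emph{continuous} function $\Green_V(\one_\Omega)$ rather than on the singular function $\Green_V(\Dirac_x)$; I would repair your argument along those lines.
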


\begin{remark}
	Notice that
	\begin{equation}
		V \Green( \delta_x) \in  L^1 (B_\rho (x)) \iff \int_ {B_\rho (x)} \frac{ V(y) }{|x-y|^{n-2s}} dy < + \infty.
	\end{equation}
\end{remark}

\begin{proof} We may take $x=0$ for convenience. 	Let $U = \Green (\Dirac_0) \in L^1 (\Omega)$.

 (i) Assume first $V U \in L^1 (\Omega)$. Then, for the approximating sequence in \Cref{thm:existence of CSOLA} corresponding to $\mu = \Dirac_0$ we have
	\begin{equation}
	V_\ee u _\ee \le V U \in L^1 (B_\rho (x)).
	\end{equation}
	Thus, due to the Dominated Convergence Theorem we have
	\begin{equation}
	V_\ee u_\ee \to V \ulim \in L^1 (B_\rho (x)).
	\end{equation}
	Therefore, the same convergence holds in the sense of measures. In particular, $\alpha_\mu = 0$, and $\ulim$ satisfies \eqref{eq:fixed point formulation}. Therefore $\Green_V( \Dirac_0 )$ is defined and $0 \notin S$. Since $0 \notin Z_0$ we deduce $0 \notin Z_V$.

\medskip
	
(ii)	Conversely, assume $0 \notin Z$.
	Taking into account \eqref{eq:fixed point formulation} $f = \one_ \Omega$
	\begin{equation}
		\int_ \Omega V \Green_V (\one_ \Omega) \Green (\psi) \le \int_ \Omega \Green_V (\one_ \Omega) \psi + \int_ \Omega V \Green_V (\one_ \Omega) \Green (\psi) = \int_ \Omega  \Green (\psi), \qquad \forall 0 \le \psi \in L^\infty (\Omega).
	\end{equation}
	Since, by construction $V \Green_V (\one_ \Omega) \in L^1 (\Omega)$ we can take a sequence
	\begin{equation}
		0\le \psi_k = \frac{  \one _ { B_ {1/k} (x_0) } }  {|B_ {1/k} (x_0)|}.
	\end{equation}
	 Due to \Cref{lem:ae aprox of G delta 0}, $\Green (\psi_k) \to U$ a.e. in $\Omega$. Due to Fatou's lemma
	\begin{equation}
		\int_ \Omega V U \Green_V (\one_ \Omega) \le \int_ \Omega U.
	\end{equation}
	Towards a contradiction, assume that $\Green_V (\delta_0)$ is defined. Then $\Green_V (\one_ \Omega) (0) > 0$ and, due to \Cref{thm:Green V L inf to continuous} $\Green_V (\one_ \Omega) (0) \ge  c > 0$ on $B_\rho$ for some $\rho$. But then
	\begin{equation}
		c \int_ {B_\rho} V U  \le \int_ \Omega U < +\infty
	\end{equation}
	using that $U \in L^1 (\Omega)$. Since \eqref{eq:V singular 0} we have that
	\begin{equation}
		\int_{\Omega \setminus B_\rho} VU \le \| V \|_{L^\infty (\Omega \setminus B_\rho)} \int_ \Omega U < +\infty.
	\end{equation}
	Thus, $VU \in L^1 (\Omega)$.
\end{proof}


\section{Extensions and open problems}

The theory that has been developed in this paper can be extended in different directions.

\begin{itemize}

\item We may also treat the problems in space dimensions $n=1,2$ which, as is well known, are somewhat special for the standard Laplacian. Here,  there are some difficulties only in the case $n - 2s \le 0$ (which corresponds to $n = 1$ and $s \ge 1/2$, or $n = 2$ and $s = 1$) since, otherwise, the kernels have the same form. Thus, for for $n - 2s < 0$ the kernel is not singular at $x=y$ and, for $n = 2s$, it has a logarithmic singularity. In \cite{Bonforte+Vazquez2016}  the information on the estimates for the different typical operators is gathered, and some of the sources we cite include $n=1,2$ (see, for instance, Corollary 1.4 of \cite{KimKim2014}). Our computations can be adapted for these cases as it is done in the standard theory for the usual Laplace operator.

\item We may consider more general operators $\mathrm L$, like those considered in \eqref{eq:RFL} one can replace $|x-y|^{-(n+2s)}$ by a different kernel $\mathbb K (x,y)$ under some conditions. Furthermore, a similar logic applies for other spectral-type operators, like $(-\Delta + m I)_{\mathrm{SFL}}^s$.

\item We can replace the condition $f\in L^1(\Omega)$ by inclusion in a weighted space $f\in L^1(\Omega, w)$ like we did in \cite{diaz+g-c+vazquez2018},  where the optimal weight was
    $w=\mbox{dist}(x,\Omega^c)^s$. The weight depends on the operator.

\item  There is an interest in studying the interaction of singular potentials with diffuse measures. See, for instance,  \cite{Ponce2017} in the case of the classical Laplacian.

\item Problems with a combination of  linear and nonlinear zero-order terms, like
$$
	\mathrm L u + Vu = f(u).
$$

\item An interesting line is to consider the corresponding parabolic problems:
$$
	u_t+\mathrm Lu+Vu=f\,.
$$

\item 
Study of more general functions $V$. 
We will give a more detailed account of the following development.
It is natural to consider the case of $V \ge 0$ a Borel measurable function. Let us define a linear continuous operator
\begin{equation}
	\widetilde \Green_V : \cM (\Omega) \to L^1 (\Omega)
\end{equation}
given by
\begin{equation}
	\widetilde \Green_V (\mu) = \lim_{\ee \to 0} \Green_{V_\ee} (\mu).
\end{equation}
When a solution of \eqref{eq:fixed point formulation} exists, it is as before $\widetilde \Green_V (\mu)$.

This new operator is given by a kernel $\G_V$. Furthermore
\begin{equation}
	\G_{V_\ee}  \searrow \G_V \qquad \textrm { in }  L^1 (\Omega \times \Omega) = L^1 (\Omega; L^1 (\Omega)).
\end{equation}
We define the sets
\begin{equation}
	Z = \{x \in \Omega: \G_V (x,y) = 0 \textrm { for a.e. } y\in \Omega  \}.
\end{equation}

Given a measure $\mu$ we can split $\mu = \mu_{Z} + \underline \mu$ where
\begin{equation}
	\mu_{Z} (A) = \mu (A \cap Z), \qquad \underline \mu (A) = \mu (A \setminus Z).
\end{equation}

For $x_0 \in Z$ we have that $\widetilde \Green_V (\delta_{x_0}) = 0$, but is not a solution of \eqref{eq:fixed point formulation}, since $x_0 \notin Z_0$. Therefore $\Green_V (\delta_{x_0})$ does not exist.
Analogously, if $ \mu_Z \ne 0$, then $\Green (\mu)$ is not defined, and $\widetilde \Green_V (\mu) = 0$.

It remains to see that $\Green_V (\underline \mu)$ exists.

For a general $\mu$ we will have
\begin{equation}
V_\ee \Green_{V_\ee} (\mu) \rightharpoonup V \widetilde \Green_V (\mu) + \lambda_\mu.
\end{equation}
This new measure $\lambda_\mu$ may be complicated and have an strange support.
The expected result is
\begin{equation}
	\lambda_\mu = 0 \iff \mu_Z = 0.
\end{equation}
In the case $Z = \{0\}$, it holds that $\lambda_\mu = \mu_Z$ so this result might be maintained.

This is equivalent to the natural extension of the results in \cite{Orsina2018} and their result is
\begin{equation}
	\Green_V (\mu) \textrm{ is defined } \iff \mu (Z) = 0.
\end{equation}

\end{itemize}

\section*{Acknowledgements}
 The first author is funded by MTM2017-85449-P (Spain).
 The second author is partially funded by Project  MTM2014-52240-P (Spain). Performed while visiting at Univ.\ Complutense de Madrid.

\end{document}